\newcommand{\NN}{\mathbb N}
\newcommand{\RR}{\mathbb R}
\newcommand{\1}{1}
\newcommand{\SA}{\mathcal A}
\newcommand{\SE}{\mathcal E}
\newcommand{\SB}{\mathcal B}
\newcommand{\SC}{\mathcal C}
\newcommand{\SF}{\mathcal F}
\newcommand{\SL}{\mathcal L}
\newcommand{\SSS}{\mathcal S}
\newcommand{\dx}[1]{\frac{\partial}{\partial x_{#1}}}
\newcommand{\ddx}[1]{\frac{\partial^2}{\partial x_{#1}^2}}
\newcommand{\dfdx}[2]{\frac{\partial^2 #1}{\partial x_{#2}^2}}
\newcommand{\norm}[1]{\parallel\! #1 \!\parallel_{\infty}}
\newcommand{\bignorm}[1]{\left| \left| #1 \right| \right|_{\infty}}
\newcommand{\normx}[1]{\parallel\! #1 \!\parallel}
\newcommand{\ainorm}[2]{| #1 |_{\alpha,#2}}
\newcommand{\wabsv}[1]{| #1 |_{\SC_w^{\alpha}}}
\newcommand{\normw}[1]{\parallel\! #1 \!\parallel_{\SC_w^{\alpha}}}
\newcommand{\bignormw}[1]{\left| \left| #1 \right| \right|_{\SC_w^{\alpha}}}
\newcommand{\picturefig}[4]{
\begin{figure}
\begin{center}
\includegraphics[scale=#1]{#2}
\end{center}
\caption{#3}
\label{equ:#4}
\end{figure}}
\DeclareMathOperator{\prodm}{\otimes}
\newtheorem{thm}{Theorem}[section]
\newtheorem{pro}[thm]{Proposition}
\newtheorem{lem}[thm]{Lemma}
\newtheorem{hyp}[thm]{Hypothesis}
\newtheorem{conv}[thm]{Convention}
\newtheorem{rmk}[thm]{Remark}
\theoremstyle{definition}
\newtheorem{defi}[thm]{Definition}
\newtheorem{nota}[thm]{Notation}
\newtheorem*{note}{Note} 
\begin{document}
%
% ============================================================
%
\begin{frontmatter}
\title{Degenerate Stochastic Differential Equations for Catalytic Branching Networks}
\title{\'Equations Diff\'erentielles Stochastiques D\'eg\'ener\'ees pour des R\'eseaux avec Branchement Catalytique}
\runtitle{Branching Networks}
\author{\fnms{Sandra} \snm{Kliem} \ead[label=e1]{kliem@math.ubc.ca} \thanksref{t1}} \\
\thankstext{t1}{The work of the author was supported by a ``St John's College Reginald and Annie Van Fellowship'', a ``University of BC Graduate Fellowship'' and an NSERC Discovery Grant.}
\runauthor{S. Kliem}
\address{Department of Mathematics, UBC, \\
1984 Mathematics Road, Vancouver, BC V6T1Z2, Canada \\
\printead{e1}}
\affiliation{University of British Columbia}
\bf{

This is a more detailed version. In the original version, proofs mimicking proofs of \cite{r6} were often omitted. Here those calculations are added. The wording is typically as in \cite{r6}.

}
\begin{abstract}
Uniqueness of the martingale problem corresponding to a degenerate SDE which models catalytic branching networks is proven. This work is an extension of the paper by Dawson and Perkins \cite{r6} to arbitrary catalytic branching networks. As part of the proof estimates on the corresponding semigroup are found in terms of weighted H\"older norms for arbitrary networks, which are proven to be equivalent to the semigroup norm for this generalized setting. 
\newline

On prouve l'unicit\'e d'un probl\`eme de martingale correspondant \`a une EDS d\'eg\'ener\'ee, qui appara\^it comme un mod\`ele de r\'eseaux avec branchement catalytique. Ce travail est une extension des r\'esultats de Dawson et Perkins \cite{r6} au cas de r\'eseaux g\'en\'eraux. On obtient en particulier des estim\'ees pour le semi-groupe des r\'eseaux g\'en\'eraux, sous forme de normes de H\"older pond\'er\'ees; et on \'etablit l'\'equivalence de ces normes avec des normes de semi-groupe dans ce contexte g\'en\'eral.
\end{abstract}
\begin{keyword}[class=AMS]
\kwd[Primary ]{60J60, 60J80}
\kwd[; secondary ]{60J35}
\end{keyword}
\begin{keyword}
\kwd{Stochastic differential equations - Martingale problem - Degenerate operators - Catalytic branching networks - Diffusions - Semigroups - Weighted H\"older norms - Perturbations}
\end{keyword}
\end{frontmatter}
\pagebreak
%
% ============================================================
% ============================================================
%
\section{INTRODUCTION}
%
% ============================================================
%
\subsection{Catalytic branching networks}
%
% ============================================================

In this paper we investigate weak uniqueness of solutions to the following system of stochastic differential equations (SDEs): 
For $j \in R \subset \{1,\ldots,d\}$ and $C_j \subset \{1,\ldots,d\} \backslash \{j\}$:
\begin{equation} \label{equ:SDE_1}
  dx_t^{(j)} = b_j(x_t) dt + \sqrt{ 2 \gamma_j(x_t) \left( \sum_{i \in C_j} x_t^{(i)} \right) x_t^{(j)} } dB_t^j 
\end{equation}
and for $j \notin R$
\begin{equation} \label{equ:SDE_2}
  dx_t^{(j)} = b_j(x_t) dt + \sqrt{ 2 \gamma_j(x_t) x_t^{(j)} } dB_t^j. 
\end{equation}
Here $x_t \in \RR_+^d$ and $b_j, \gamma_j, j=1,\ldots,d$ are H\"older-continuous functions on $\RR_+^d$ with $\gamma_j(x) > 0$, and $b_j(x) \geq 0$ if $x_j=0$.

The degeneracies in the covariance coefficients of this system make the investigation of uniqueness a challenging question. Similar results have been proven in \cite{r1} and \cite{r4} but without the additional singularity $\sum_{i \in C_j} x_t^{(i)}$ in the covariance coefficients of the diffusion. Other types of singularities, for instance replacing the additive form by a multiplicative form $\prod_{i \in C_j} x_t^{(i)}$, are possible as well, under additional assumptions on the structure of the network (cf. Remark \ref{equ:rmk_on_multiplicative_setup} at the end of Subsection \ref{equ:subsection_1_3}).

The given system of SDEs can be understood as a stochastic analogue to a system of ODEs for the concentrations $y_j, j = 1,\ldots,d$ of a type $T_j$. Then $y_j / \dot{y}_j$ corresponds to the rate of growth of type $T_j$ and one obtains the following ODEs (see \cite{r8}): for independent replication $\dot{y}_j = b_j y_j$, autocatalytic replication $\dot{y}_j = \gamma_j y_j^2$ and catalytic replication $\dot{y}_j = \gamma_j \bigl( \sum_{i \in C_j} y_i \bigr) y_j$. In the catalytic case the types $T_i, i \in C_j$ catalyze the replication of type $j$, i.e. the growth of type $j$ is proportional to the sum of masses of types $i, i \in C_j$ present at time $t$. 

An important case of the above system of ODEs is the so-called hypercycle, firstly introduced by Eigen and Schuster (see \cite{r7}). It models hypercyclic replication, i.e. $\dot{y}_j = \gamma_j y_{j-1} y_j$ and represents the simplest form of mutual help between different types.

The system of SDEs can be obtained as a limit of branching particle systems. The growth rate of types in the ODE setting now corresponds to the branching rate in the stochastic setting, i.e. type $j$ branches at a rate proportional to the sum of masses of types $i, i \in C_j$ at time $t$. 

The question of uniqueness of equations with non-constant coefficients arises already in the case $d=2$ in the renormalization analysis of hierarchically interacting two-type branching models treated in \cite{r5}. The consideration of successive block averages leads to a renormalization transformation on the diffusion functions of the SDE
\[ dx_t^{(i)} = c \left( \theta_i - x_t^{(i)} \right) dt + \sqrt{ 2 g_i(x_t) } dB_t^i, i=1,2 \]
with $\theta_i \geq 0, i=1,2$ fixed. Here $g=(g_1,g_2)$ with $g_i(x) = x_i \gamma_i(x)$ or $g_i(x) = x_1 x_2 \gamma_i(x)$, $i=1,2$ for some positive continuous function $\gamma_i$ on $\RR_+^2$. The renormalization transformation acts on the diffusion coefficients $g$ and produces a new set of diffusion coefficients for the next order block averages. To be able to iterate the renormalization transformation indefinitely a subclass of diffusion functions has to be found that is closed under the renormalization transformation. To even define the renormalization transformation one needs to show that the above SDE has a unique weak solution and to iterate it we need to establish uniqueness under minimal conditions on the coefficients.

This paper is an extension of the work done in Dawson and Perkins \cite{r6}. The latter, motivated by the stochastic analogue to the hypercycle and by \cite{r5}, proved weak uniqueness in the above mentioned system of SDEs (\ref{equ:SDE_1}) and (\ref{equ:SDE_2}), where (\ref{equ:SDE_1}) is restricted to
\[ dx_t^{(j)} = b_j(x_t) dt + \sqrt{ 2 \gamma_j(x_t) x_t^{(c_j)} x_t^{(j)} } dB_t^j, \]
i.e. $C_j = \{ c_j \}$ and (\ref{equ:SDE_2}) remains unchanged. This restriction to at most one catalyst per reactant is sufficient for the renormalization analysis for $d=2$ types, but for more than $2$ types one will encounter models where one type may have two catalysts. The present work overcomes this restriction and allows consideration of general multi-type branching networks as envisioned in \cite{r5}, including further natural settings such as competing hypercycles (cf. \cite{r7} page 55 resp. \cite{r8}, p. 106). In particular, the techniques of \cite{r6} will be extended to the setting of general catalytic networks.

Intuitively it is reasonable to conjecture uniqueness in the general setting as there is less degeneracy in the diffusion coefficients; $x_t^{(c_j)}$ changes to $\sum_{i \in C_j} x_t^{(i)}$, all coordinates $i \in C_j$ have to become zero at the same time to result in a singularity. 

For $d=2$ weak uniqueness was proven for a special case of a mutually catalytic model ($\gamma_1=\gamma_2=\mbox{const.}$) via a duality argument in \cite{r9}. Unfortunately this argument does not extend to the case $d>2$.
%
% ============================================================
%
\subsection{Comparison with Dawson and Perkins \cite{r6}} 
%
% ============================================================

The generalization to arbitrary networks results in more involved calculations. The most significant change is the additional dependency among catalysts. In \cite{r6} the semigroup of the process under consideration could be decomposed into groups of single vertices and groups of catalysts with their corresponding reactants (see Figure \ref{equ:compare}). Hence the main part of the calculations in \cite{r6}, where bounds on the semigroup are derived, i.e. Section 2 of \cite{r6} (``Properties of the basic semigroups''), could be reduced to the setting of a single vertex or a single catalyst with a finite number of reactants. In the general setting this strategy is no longer available as one reactant is now allowed to have multiple catalysts (see again Figure \ref{equ:compare}). As a consequence we shall treat all vertices in one step only. This results in more work in Section 2, where bounds on the given semigroup are now derived directly. 
%
% ------------------------------------------------------------
%
\picturefig{0.6}{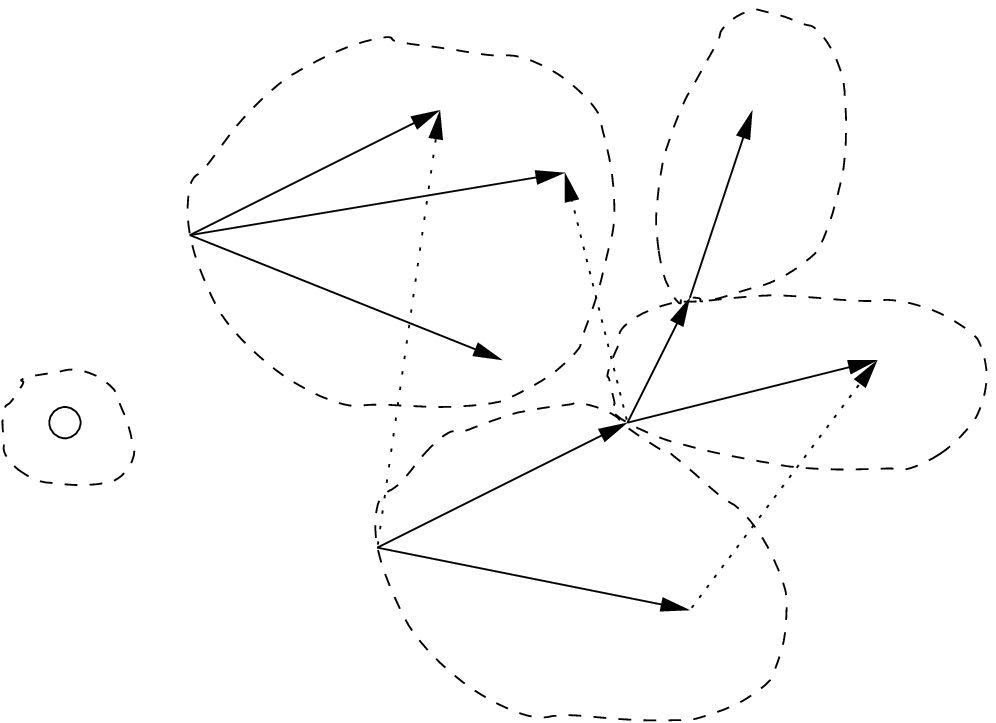}{Decomposition from the catalyst's point of view: Arrows point from vertices $i \in N_C$ to vertices $j \in R_i$. Separate points signify vertices $j \in N_2$. The dotted arrows signify arrows which are only allowed in the generalized setting and thus make a decomposition of the kind used in \cite{r6} inaccessible.}{compare} 
%
% ------------------------------------------------------------
%

We also employ a change of perspective from reactants to catalysts. In \cite{r6} every reactant $j$ had one catalyst $c_j$ only (and every catalyst $i$ a set of reactants $R_i$). For the general setting it turns out to be more efficient to consider every catalyst $i$ with the set $R_i$ of its reactants. In particular, the restriction from $R_i$ to $\bar{R}_i$, including only reactants whose catalysts are all zero, turns out to be crucial for later definitions and calculations. It plays a key role in the extension of the definition of the weighted H\"older norms to general networks (see Subsection \ref{equ:section_1_4}). 

Changes in one catalyst indirectly impact other catalysts now via common reactants, resulting for instance in new mixed partial derivatives. As a first step a representation for the semigroup of the generalized process had to be found (see (\ref{equ:eighteen})). In \cite{r6}, (12) the semigroup could be rewritten in a product form of semigroups of each catalyst with its reactants. Now a change in one catalyst resp. coordinate of the semigroup impacts in particular the local covariance of all its reactants. As the other catalysts of this reactant also appear in this coefficient, a decomposition becomes impossible. Instead the triangle inequality has to be often used to express resulting multi-dimensional coordinate changes of the function $G$, which is closely related with the semigroup representation (see (\ref{equ:nineteen})), via one-dimensional ones. As another important tool Lemma \ref{equ:lemma_13} was developed in this context.

The ideas of the proofs in \cite{r6} often had to be extended. Major changes can be found in the critical Proposition \ref{equ:proposition_23} and its associated Lemmas (especially Lemma \ref{equ:lemma_26}). The careful extension of the weighted H\"older norms to arbitrary networks had direct impact on the proofs of Lemma \ref{equ:lemma_18} and Theorem \ref{equ:theorem_19}.
%
% ============================================================
%
\subsection{The model} 
%
% ============================================================

Let a branching network be given by a directed graph $(V,\SE)$ with vertices $V = \{1,\ldots,d \}$ and a set of directed edges $\SE = \{ e_1,\ldots,e_k \}$. The vertices represent the different types, whose growth is under investigation, and $(i,j) \in \SE$ means that type $i$ ``catalyzes'' the branching of type $j$. As in \cite{r6} we continue to assume:
%
% ------------------------------------------------------------
%
\begin{hyp} \label{equ:hypothesis_1}
$(i,i) \notin \SE$ for all $i \in V$. 
\end{hyp}
%
% ------------------------------------------------------------

Let $C$ denote the set of catalysts, i.e. the set of vertices which appear as the $1$st element of an edge and $R$ denote the set of reactants, i.e. the set of vertices that appear as the $2$nd element of an edge. 

For $j \in R$, let 
\[ C_j = \{ i: (i,j) \in \SE \} \]
be the set of catalysts of $j$ and for $i \in C$, let
\[ R_i = \{ j: (i,j) \in \SE \} \]
be the set of reactants, catalyzed by $i$. If $j \notin R$ let $C_j = \emptyset$ and if $i \notin C$, let $R_i = \emptyset$.

We shall consider the following system of SDEs: \\
For $j \in R$:
\[ dx_t^{(j)} = b_j(x_t) dt + \sqrt{ 2 \gamma_j(x_t) \left( \sum_{i \in C_j} x_t^{(i)} \right) x_t^{(j)} } dB_t^j \]
and for $j \notin R$
\[ dx_t^{(j)} = b_j(x_t) dt + \sqrt{ 2 \gamma_j(x_t) x_t^{(j)} } dB_t^j. \]

Our goal will be to show the weak uniqueness of the given system of SDEs.
%
% ============================================================
%
\subsection{Statement of the main result} 
%
% ============================================================

In what follows we shall impose additional regularity conditions on the coefficients of our diffusions, similar to the ones in Hypothesis 2 of \cite{r6}, which will remain valid unless indicated to the contrary. $|x|$ is the Euclidean length of $x \in \RR^d$ and for $i \in V$ let $e_i$ denote the unit vector in the $i$th direction. 
%
% ------------------------------------------------------------
%
\begin{hyp} \label{equ:hypothesis_2}
For $i \in V$,
\begin{align*}
  & \gamma_i: \RR_+^d \rightarrow (0,\infty), \\
  & b_i: \RR_+^d \rightarrow \RR
\end{align*}
are taken to be H\"older continuous on compact subsets of $\RR_+^d$ such that $|b_i(x)| \leq c(1+|x|)$ on $\RR_+^d$, and
\[ \begin{cases}
  & b_i(x) \geq 0 \mbox{ if } x_i=0. \mbox{ In addition, } \cr
  & b_i(x) > 0 \mbox{ if } i \in C \cup R \mbox{ and } x_i=0.
\end{cases} \]
\end{hyp}
%
% ------------------------------------------------------------
%
\begin{defi} \label{equ:definition_3}
If $\nu$ is a probability on $\RR_+^d$, a probability $P$ on $\SC(\RR_+,\RR_+^d)$ is said to solve the martingale problem {\it MP}($\SA$,$\nu$) if under $P$, the law of $x_0(\omega) = \omega_0$ ($x_t(\omega)=\omega(t)$) is $\nu$ and for all $f \in \SC_b^2(\RR_+^d)$,
\[ M_f(t) = f(x_t) - f(x_0) - \int_0^t \SA f(x_s) ds \]
is a local martingale under $P$ with respect to the canonical right-continuous filtration $(\SF_t)$.
\end{defi}
%
% ------------------------------------------------------------
%
\begin{rmk} \label{equ:rmk_unique}
The weak uniqueness of a system of SDEs is equivalent to the uniqueness of the corresponding martingale problem (see for instance, \cite{r10}, \\ V.(19.7)).
\end{rmk}
%
% ------------------------------------------------------------
%
For $f \in \SC_b^2(\RR_+^d)$, the generator corresponding to our system of SDEs is
\begin{align*}
  \SA f(x) & = \SA^{(b,\gamma)} f(x) \\
  & = \sum_{j \in R} \gamma_j(x) \left( \sum_{i \in C_j} x_i \right) x_j f_{jj}(x) + \sum_{j \notin R} \gamma_j(x) x_j f_{jj}(x) + \sum_{j \in V} b_j(x) f_j(x).
\end{align*}
Here $f_{ij}$ is the second partial derivative of $f$ w.r.t. $x_i$ and $x_j$. 
%
% ------------------------------------------------------------

As a state space for the generator $\SA$ we shall use
\begin{equation} \label{equ:state_space}
  \SSS = \left\{ x \in \RR_+^d: \prod_{j \in R} \left( \sum_{i \in C_j} x_i + x_j \right) > 0 \right\}. 
\end{equation}
We first note that $\SSS$ is a natural state space for $\SA$. 
%
% ------------------------------------------------------------
%
\begin{lem} \label{equ:lemma_5}
If $P$ is a solution to {\it MP}($\SA$,$\nu$), where $\nu$ is a probability on $\RR_+^d$, then $x_t \in \SSS$ for all $t>0$ $P$-a.s. 
\end{lem}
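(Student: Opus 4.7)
For each $j \in R$ set $A_j := C_j \cup \{j\}$ and $Y_t^{(j)} := \sum_{i \in A_j} x_t^{(i)}$. Since $\{x_t \in \SSS\} = \bigcap_{j \in R} \{Y_t^{(j)} > 0\}$, it suffices, for each $j \in R$ and each $t > 0$, to prove $P(Y_t^{(j)} > 0) = 1$. Applying the martingale problem to smooth truncations of the coordinate functions $x \mapsto x_i$, $i \in A_j$, and summing, one obtains the continuous nonnegative semimartingale decomposition $Y_t^{(j)} = Y_0^{(j)} + \int_0^t \beta_j(x_s)\,ds + M_t^{(j)}$, where $\beta_j(x) = \sum_{i \in A_j} b_i(x)$ and $\langle M^{(j)}\rangle_t = \int_0^t \sigma_j(x_s)\,ds$ with
\[
\sigma_j(x) = 2\sum_{i \in A_j \cap R} \gamma_i(x)\Bigl(\sum_{k \in C_i} x_k\Bigr) x_i + 2\sum_{i \in A_j \setminus R} \gamma_i(x)\, x_i.
\]

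Two structural properties drive the argument. First, every summand of $\sigma_j$ carries a factor $x_i$ with $i \in A_j$, so on any compact $K \subset \RR_+^d$ one has $\sigma_j(x) \leq C_K\, Y^{(j)}(x)$, and in particular $\sigma_j$ vanishes on $\{Y^{(j)} = 0\}$. Second, because $A_j \subseteq C \cup R$, Hypothesis \ref{equ:hypothesis_2} yields $b_i(x) > 0$ on $\{x_i = 0\}$ for every $i \in A_j$; hence $\beta_j(x) > 0$ whenever $Y^{(j)}(x) = 0$, and by continuity $\beta_j \geq \eta_K > 0$ on $\{Y^{(j)} \leq \delta_K\} \cap K$ for some $\eta_K, \delta_K > 0$.

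My plan is to apply the martingale problem to the bounded $\SC^2$ test function $\phi_\epsilon(x) := -\log(Y^{(j)}(x) + \epsilon)$ on the stochastic interval $[0, t \wedge \tau_N]$ with $\tau_N := \inf\{s : |x_s| \geq N\}$. A direct computation gives
\[
\SA\phi_\epsilon(x) = \frac{\sigma_j(x)/2}{(Y^{(j)}(x) + \epsilon)^2} - \frac{\beta_j(x)}{Y^{(j)}(x) + \epsilon}.
\]
Using $\sigma_j(x) \leq C_N(Y^{(j)}(x) + \epsilon)$ together with the lower bound on $\beta_j$ near $\{Y^{(j)} = 0\}$, one bounds $\SA\phi_\epsilon$ from above by a constant $M_N$ independent of $\epsilon$. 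Taking expectations and letting $\epsilon \downarrow 0$ via Fatou gives $E[-\log Y_{t \wedge \tau_N}^{(j)}] < \infty$, hence $Y_{t \wedge \tau_N}^{(j)} > 0$ almost surely. The linear-growth hypothesis on $b_i$ and $\gamma_i$ precludes explosion, so $\tau_N \uparrow \infty$ and the conclusion follows.

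The main obstacle is securing the uniform upper bound on $\SA\phi_\epsilon$ in the unfavorable regime $C_N > 2\eta_N$, where $-\log$ alone does not close the estimate. In that regime I would supplement $-\log$ by a power factor $(Y^{(j)} + \epsilon)^{-q}$ with $q > 0$ small, or alternatively invoke It\^o--Tanaka on $Y^{(j)}$ together with the occupation-time identity $\tfrac{1}{2}\, L_t^0(Y^{(j)}) = \int_0^t \1_{\{Y_s^{(j)} = 0\}} \beta_j(x_s)\,ds$ (the stochastic integral against $M^{(j)}$ vanishes since $\sigma_j = 0$ on $\{Y^{(j)} = 0\}$) to force the Lebesgue measure of $\{s \leq t : Y_s^{(j)} = 0\}$ to vanish; the conclusion $P(Y_t^{(j)} = 0) = 0$ then follows by a Fubini argument together with a strong-Markov/continuity step to promote the a.e.-$t$ statement to the stated one.
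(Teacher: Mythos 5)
Your reduction to showing $P(Y_t^{(j)}>0)=1$ is sound, and the semimartingale decomposition with $\sigma_j(x)\le C_K\,Y^{(j)}(x)$ on compacts and $\beta_j\ge\eta_K>0$ near $\{Y^{(j)}=0\}$ (using Hypothesis \ref{equ:hypothesis_2} and $A_j\subset C\cup R$) is correct. The genuine gap is that these two scalar bounds are, by themselves, \emph{not} enough to conclude that $Y^{(j)}$ avoids $0$, and this cannot be repaired by choosing a cleverer Lyapunov function of $Y^{(j)}$ alone. Indeed a one-dimensional diffusion $dZ=\eta\,dt+\sqrt{CZ}\,dB$ with $0<\eta<C/2$ reaches $0$ with positive probability: its scale function density is $z^{-2\eta/C}$, which is integrable at $0$, and so is the speed density $z^{2\eta/C-1}/C$, making $0$ an accessible boundary. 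Thus any test function $\phi$ depending on the state only through $Y^{(j)}$, and any estimate that uses only $\sigma_j\le CY$, $\beta_j\ge\eta$, must fail in the regime $\eta<C/2$ you yourself flag. Your two fallbacks do not close this. Supplementing $-\log$ by a power $(Y^{(j)}+\epsilon)^{-q}$ produces the condition $(q+1)C_N/2\le\eta_N$, which is \emph{stronger} than the one you were trying to avoid. The It\^o--Tanaka identity $L_t^0(Y^{(j)})=\int_0^t\1_{\{Y_s^{(j)}=0\}}\beta_j(x_s)\,ds$ is correct, but it does not force $\int_0^t\1_{\{Y_s^{(j)}=0\}}\,ds=0$: nothing rules out $L_t^0>0$ a priori (reflecting Brownian motion has $L^0>0$), and the occupation-time formula only gives $\int_0^t\1_{\{Y_s^{(j)}=0\}}\sigma_j(x_s)\,ds=0$, which is vacuous since $\sigma_j$ already vanishes on $\{Y^{(j)}=0\}$. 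The concluding ``Fubini plus strong Markov'' promotion from a.e.\ $t$ to all $t$ is also left unargued, though it is the smaller of the issues.

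The paper's proof (following Lemma 5 of \cite{r6}) is a Bessel comparison that exploits the \emph{coupled} structure of the system, not just scalar bounds on the sum. The mechanism is roughly: when $\sum_{i\in C_j}x^{(i)}$ is small, the diffusion coefficient $2\gamma_j(\sum_{i\in C_j}x^{(i)})x^{(j)}$ of the reactant is proportionally small, so after the natural time change $\tau(t)=\int_0^t\gamma_j(x_s)\sum_{i\in C_j}x_s^{(i)}\,ds$ the process $x^{(j)}$ looks like a squared Bessel process whose (random) dimension $b_j/(\gamma_j\sum_{i\in C_j}x^{(i)})$ is large — in particular $\ge 2$ — precisely where the catalyst sum is small, using $b_j>0$ on $\{x_j=0\}$. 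Hence $x^{(j)}$ cannot vanish together with its catalysts, which is exactly what $Y^{(j)}>0$ requires. This conditional/time-changed structure, rather than any Lyapunov bound on $Y^{(j)}$, is what actually makes the argument close; your write-up omits it, and I do not see a way to carry out your version without it.
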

%
% ------------------------------------------------------------
%
\begin{proof}
The proof follows as for Lemma 5, \cite{r6} on p. 377 via a comparison argument with a Bessel process, using Hypothesis \ref{equ:hypothesis_2}.
\end{proof} 
%
% ------------------------------------------------------------

We shall now state the main theorem which, together with Remark \ref{equ:rmk_unique} provides weak uniqueness of the given system of SDEs for a branching network.
%
% ------------------------------------------------------------
%
\begin{thm} \label{equ:theorem_4}
Assume Hypothesis \ref{equ:hypothesis_1} and \ref{equ:hypothesis_2} hold. Then for any probability $\nu$, on $\SSS$, there is exactly one solution to {\it MP}($\SA$,$\nu$).
\end{thm}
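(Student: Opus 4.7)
The plan is to split the theorem into existence and uniqueness. For existence I would regularize the diffusion coefficients (e.g.\ replacing the quantity under each square root by itself plus $\epsilon$), solve the resulting non-degenerate SDEs on $\RR_+^d$, and take a weak limit as $\epsilon \downarrow 0$. Tightness of the laws comes from the linear growth of $b_j$ in Hypothesis \ref{equ:hypothesis_2} together with local boundedness of $\gamma_j$; any weak limit is a solution of MP$(\SA,\nu)$ by the standard martingale-characterization argument, and Lemma \ref{equ:lemma_5} pushes the limiting paths into $\SSS$ for $t>0$.

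For uniqueness I would run the Stroock--Varadhan resolvent perturbation in the degenerate form exploited in \cite{r6}. Fix $x_0 \in \SSS$ and consider the constant-coefficient ``frozen'' operator
\[ \SA^{x_0} f(x) = \sum_{j \in R} \gamma_j(x_0)\Bigl(\sum_{i \in C_j}(x_0)_i\Bigr) x_j f_{jj}(x) + \sum_{j \notin R} \gamma_j(x_0) x_j f_{jj}(x) + \sum_{j \in V} b_j(x_0) f_j(x), \]
whose underlying process is an explicit product of Feller-type one-dimensional diffusions and whose semigroup $P_t^{x_0}$ admits the integral representation announced in (\ref{equ:eighteen}). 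The aim is then to combine, on a small neighbourhood of $x_0$, the resolvent identity $R_\lambda = R_\lambda^{x_0} + R_\lambda^{x_0}(\SA - \SA^{x_0})R_\lambda$ with a Schauder-type bound of the form $\bignormw{\SA^{x_0} R_\lambda^{x_0} f}\leq C\,\bignormw{f}$ to turn $(\SA - \SA^{x_0})R_\lambda^{x_0}$ into a contraction in the weighted H\"older space, so that any two solutions have the same resolvent on test functions supported near $x_0$. Covering $\SSS$ by such neighbourhoods, standard patching arguments give equality of one-dimensional distributions of any two solutions, and Remark \ref{equ:rmk_unique} then yields weak uniqueness of the SDE system.

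The technical heart, and the step I expect to be the main obstacle, is the weighted H\"older estimate on $P_t^{x_0}$. The degeneracies at $\{x_j=0\}$ and at $\{\sum_{i\in C_j}x_i=0\}$ impose different time-scalings on different faces of $\SSS$, which forces the weights defining $\SC_w^\alpha$ to be tailored to the collection $\bar R_i$ of reactants whose catalysts have all vanished (Subsection \ref{equ:section_1_4}). In \cite{r6} the basic semigroup factored over single vertices and single-catalyst clusters, so the Schauder bound reduced to a product of low-dimensional estimates; in the general network a reactant may have several catalysts and no such product decomposition is available. I would therefore obtain the estimate globally from (\ref{equ:eighteen}): express multi-coordinate increments of the auxiliary function $G$ in (\ref{equ:nineteen}) as telescoping sums of one-coordinate increments via the triangle inequality, bound each one using Lemma \ref{equ:lemma_13}, and handle the genuinely new mixed partial derivatives (created because one catalyst coordinate now appears in the diffusion coefficient of several reactants, each reactant also being acted on by other catalysts) by tracking carefully which weight factor is attached to each differentiated variable. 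Combined with the equivalence between $\SC_w^\alpha$ and the natural semigroup norms, this yields the Schauder bound needed for the perturbation step, completing the proof.
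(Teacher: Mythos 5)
Your general outline (Stroock--Varadhan perturbation, localization near an arbitrary $x_0\in\SSS$, weighted H\"older Schauder estimates obtained directly from the integral representation of the basic semigroup, equivalence with the semigroup norm) matches the paper's strategy. However, the ``frozen'' comparison operator you wrote down is not the right one, and with it the argument breaks at the very first step.

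You proposed
\[
\SA^{x_0} f(x) = \sum_{j \in R} \gamma_j(x_0)\Bigl(\sum_{i \in C_j}(x_0)_i\Bigr) x_j f_{jj}(x) + \sum_{j \notin R} \gamma_j(x_0) x_j f_{jj}(x) + \sum_{j \in V} b_j(x_0) f_j(x),
\]
i.e.\ you freeze the entire local covariance including the catalyst sum $\sum_{i\in C_j}x_i$ at $x_0$. But the only initial points that remain after the localization and non-degenerate reduction are the singular ones, for which some $j\in R$ has $\sum_{i\in C_j}(x_0)_i=0$. For such $j$ your operator has zero second-order term: the $j$-th coordinate of the frozen diffusion is deterministic drift. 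That is a terrible local model for the original SDE, in which the catalyst sum $\sum_{i\in C_j}x_t^{(i)}$ starts at zero but immediately becomes strictly positive (Feller branching with positive immigration drift), turning on the noise in the $j$-th coordinate. Concretely, $P_t^{x_0}$ has no smoothing at all in the $N_R$-directions, so the Schauder bound you would need (e.g.\ Proposition \ref{equ:proposition_16}) fails, and the perturbation $(\SA-\SA^{x_0})R_\lambda^{x_0}$ cannot be made contractive in $\SC_w^\alpha$ no matter how small the neighbourhood is.

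The paper's $\SA^0$ in (\ref{equ:seven}) is built differently, and the difference is the whole point. For $j\in N_R$ (the reactants all of whose catalysts vanish at $x^0$) one \emph{keeps} the variable coefficient $\sum_{i\in C_j}x_i$, and what gets frozen is $\gamma_j$ together with the factor $x_j$ --- which is harmless because $x^0\in\SSS$ forces $x_j^0>0$, so one may set $\gamma_j^0=\gamma_j(x^0)x_j^0>0$. The catalysts $i\in N_C$ and the remaining coordinates $j\in N_2$ are modelled as Feller branching processes with immigration, with strictly positive $b_i^0$ when $i$ is a zero catalyst (this is exactly where Hypothesis~\ref{equ:hypothesis_2} enters). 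Conditionally on the catalysts, each $x^{(j)}$, $j\in N_R$, is then a time-inhomogeneous Brownian motion, which is what yields the representation (\ref{equ:eighteen})--(\ref{equ:twenty}) and the integrability required for Lemma~\ref{equ:lemma_7}(b). Once you replace your $\SA^{x_0}$ by this $\SA^0$, the rest of your outline (express multi-coordinate increments of $G$ via one-coordinate telescoping and Lemma~\ref{equ:lemma_13}, establish Propositions~\ref{equ:proposition_16}--\ref{equ:proposition_23}, integrate in $t$ to bound $R_\lambda$, and close the perturbation loop via Theorem~\ref{equ:thm_36_37}) does track the paper.

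One smaller point: after the Girsanov reduction, the drift coefficients $b_j^0$ for $j\in N_2$ need to be taken nonnegative, and the full case $b_j(x^0)\in\RR$ is handled at the end of Section~\ref{equ:section_3} by a change of measure. Your sketch should flag this, since without the sign condition the frozen Feller processes are not well defined on $\RR_+$.
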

%
% ============================================================
%
\subsection{Outline of the proof} \label{equ:subsection_1_3}
%
% ============================================================

Our main task in proving Theorem \ref{equ:theorem_4} consists in establishing uniqueness of solutions to the martingale problem {\it MP}($\SA$,$\nu$). Existence can be proven as in Theorem 1.1 of \cite{r1}. The main idea in proving uniqueness consists in understanding our diffusion as a perturbation of a well-behaved diffusion and applying the Stroock-Varadhan perturbation method (refer to \cite{r11}) to it. This approach can be devided into three steps. \\ 
%
% ............................................................
%

{\it Step 1: Reduction of the problem.} We can assume w.l.o.g. that $\nu = \delta_{x^0}$. Furthermore it is enough to consider uniqueness for families of strong Markov solutions. Indeed, the first reduction follows by a standard conditioning argument (see p. 136 of \cite{r3}) and the second reduction follows by using Krylov's Markov selection theorem (Theorem 12.2.4 of \cite{r11}) together with the proof of Proposition 2.1 in \cite{r1}.

Next we shall use a localization argument of \cite{r11} (see e.g. the argument in the proof of Theorem 1.2 of \cite{r4}), which basically states that it is enough if for each $x^0 \in \SSS$ the martingale problem $MP(\tilde{\SA},\delta_{x^0})$ has a unique solution, where $b_i = \tilde{b}_i$ and $\gamma_i = \tilde{\gamma}_i$ agree on some $B(x^0,r_0) \cap \RR_+^d$. Here we used in particular that a solution never exits $\SSS$ as shown in Lemma \ref{equ:lemma_5}. 

Finally, if the covariance matrix of the diffusion is non-degenerate, uniqueness follows by a perturbation argument as in \cite{r11} (use e.g. Theorem 6.6.1 and Theorem 7.2.1). Hence consider only singular initial points, i.e. where either
\[ \Bigl\{ x_0^{(j)} = 0 \mbox{ or } \sum \limits_{i \in C_j} x_0^{(i)} = 0 \mbox{ for some } j \in R \Bigr\} \mbox{ or } \Bigl\{ x_0^{(j)} = 0 \mbox{ for some } j \notin R. \Bigr\} \]
%
% ............................................................
%

{\it Step 2: Perturbation of the generator.} Fix a singular initial point $x^0 \in \SSS$ and set (for an example see Figure \ref{equ:perturb_defs})
\begin{align*}
  N_R &= \left\{ j \in R: \sum \limits_{i \in C_j} x_i^0 = 0 \right\}; \\
  N_C &= \cup_{j \in N_R} C_j; \\
  N_2 &= V \backslash \left( N_R \cup N_C \right); \\
  \bar{R}_i &= R_i \cap N_R,
\end{align*}
i.e. in contrast to the setting in \cite{r6}, p. 327, $N_2$ can also include zero catalysts, but only those whose reactants have at least one more catalyst being non-zero.
%
% ------------------------------------------------------------
%
\picturefig{0.6}{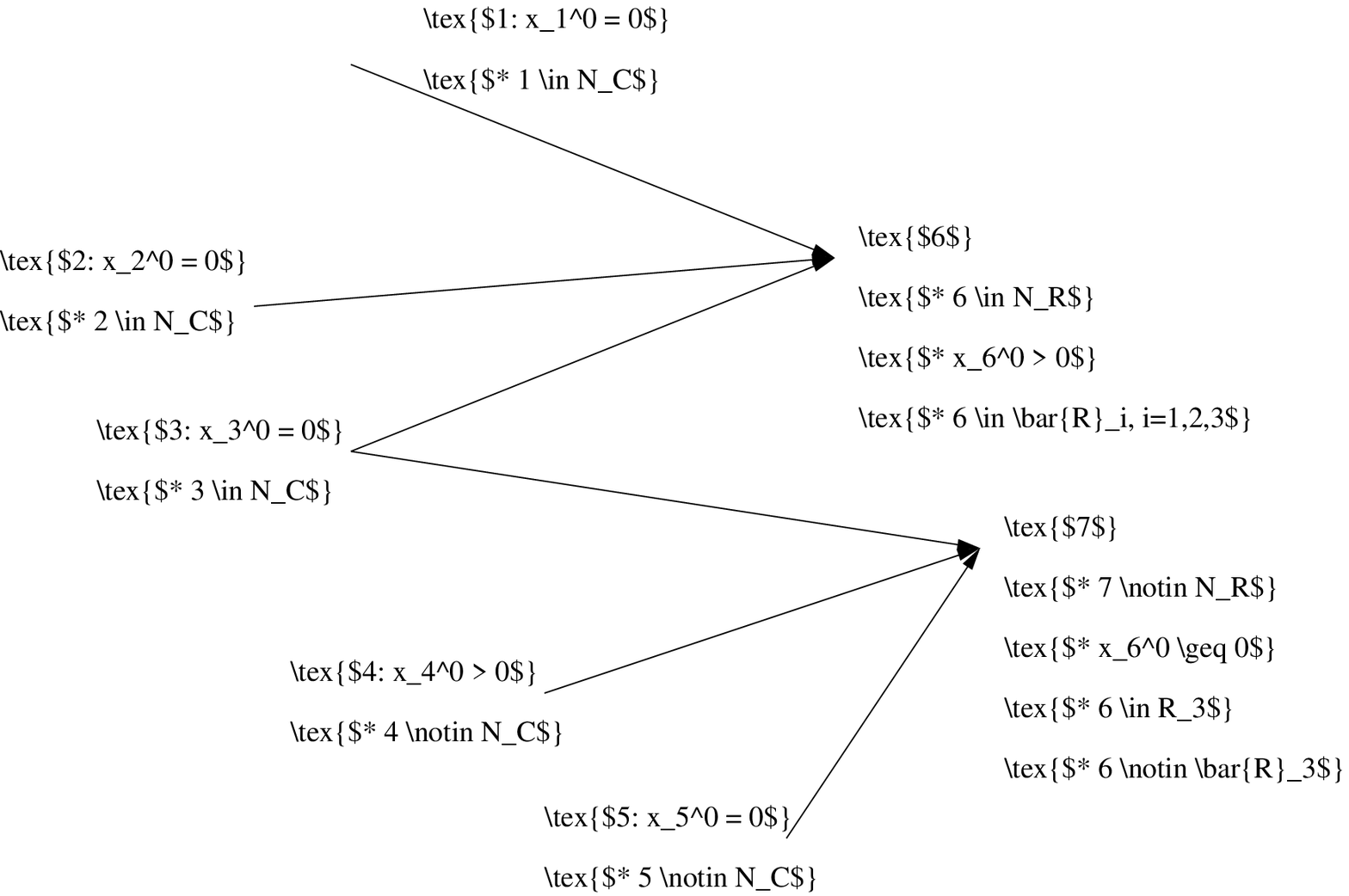}{Definition of $N_R, N_C$ and $\bar{R}_i$. The $*$'s are the implications deduced from the given setting.}{perturb_defs} 
%
% ------------------------------------------------------------
%

%
Let $Z = Z(x^0) = \{ i \in V: x_i^0 = 0 \}$ (if $i \notin Z$, then $x_i^0 > 0$ and so $x_s^{(i)} > 0$ for small $s$ a.s. by continuity). Moreover, if $x^0 \in \SSS$, then $N_R \cap Z = \emptyset$ and
\[ N_R \cup N_C \cup N_2 = V \]
is a disjoint union. 
%
% ------------------------------------------------------------
%
\begin{nota}
In what follows let 
\[ \RR^A \equiv \{ f, f: A \rightarrow \RR \} \mbox{ resp. } \RR_+^A \equiv \{ f, f: A \rightarrow \RR_+ \}. \]
for arbitrary $A \subset V$. 
\end{nota} 
%
% ------------------------------------------------------------
 
Next we shall rewrite our system of SDEs with corresponding generator $\SA$ as a perturbation of a well-understood system of SDEs with corresponding generator $\SA^0$, which has a unique solution. The state space of $\SA^0$ will be $\SSS\!\left( x^0 \right) = \SSS_0 = \{ x \in \RR^d: x_i \geq 0 \mbox{ for all } i \notin N_R \}$. 

First, we view $\left\{ x^{(j)} \right\}_{j \in N_R} \cup \left\{ x^{(i)} \right\}_{i \in N_C}$, i.e. the set of vertices with zero catalysts together with these catalysts, near its initial point $\left\{ x_j^0 \right\}_{j \in N_R} \cup \left\{ x_i^0 \right\}_{i \in N_C}$ as a perturbation of the diffusion on $\RR^{N_R} \times \RR_+^{N_C}$, which is given by the unique solution to the following system of SDEs:
\begin{align}
  & dx_t^{(j)} = b_j^0 dt + \sqrt{2 \gamma_j^0 \left( \sum \limits_{i \in C_j} x_t^{(i)} \right) } dB_t^{j}, \ x_0^{(j)} = x_j^0, \mbox{ for } j \in N_R \nonumber \\
  & \mbox{and} \label{equ:five} \\ 
  & dx_t^{(i)} = b_i^0 dt + \sqrt{2 \gamma_i^0 x_t^{(i)}} dB_t^{i}, \ x_0^{(i)} = x_i^0, \mbox{ for } i \in N_C, \nonumber
\end{align}
where for $j \in N_R$, $b_j^0 = b_j(x^0) \in \RR$ and $\gamma_j^0 = \gamma_j(x^0) x_j^0 > 0$ as $x_j^0 > 0$ if its catalysts are all zero. Also, $b_i^0 = b_i(x^0) > 0$ as $x_i^0 = 0$ for $i \in N_C$ and $\gamma_i^0 = \gamma_i(x^0) \sum_{k \in C_i} x_k^0 > 0$ if $i \in N_C \cap R$ as $i$ is a zero catalyst thus having at least one non-zero catalyst itself, or $\gamma_i^0 = \gamma_i(x^0) > 0$ if $i \in N_C \backslash R$. Note that the non-negativity of $b_i^0, i \in N_C$ ensures that solutions starting in $\{ x_i^0 \geq 0 \}$ remain there (also see definition of $\SSS_0$).

Secondly, for $j \in N_2$ we view this coordinate as a perturbation of the Feller branching process (with immigration)
\begin{equation} \label{equ:six}
  dx_t^{(j)} = b_j^0 dt + \sqrt{2 \gamma_j^0 x_t^{(j)}} dB_t^{j}, \ x_0^{(j)} = x_j^0, \mbox{ for } j \in N_2,
\end{equation}
where $b_j^0 = (b_j(x^0) \vee 0)$ (at the end of Section \ref{equ:section_3} the general case $b_j(x^0) \in \RR$ is reduced to $b_j(x^0) \geq 0$ by a Girsanov transformation), $\gamma_j^0 = \gamma_j(x^0) \sum_{i \in C_j} x_i^0 > 0$ if $j \in R$ by definition of $N_2$, i.e. at least one of the catalysts being positive, or $\gamma_j^0 = \gamma_j(x^0) > 0$ if $j \notin R$. As for $i \in N_C$, the non-negativity of $b_j^0, j \in N_2$ ensures that solutions starting in $\{ x_j^0 \geq 0 \}$ remain there (see again definition of $\SSS_0$).

Therefore we can view $\SA$ as a perturbation of the generator
\begin{equation} \label{equ:seven}
  \SA^0 = \sum_{j \in V} b_j^0 \dx{j} + \sum_{j \in N_R} \gamma_j^0 \left( \sum_{i \in C_j} x_i \right) \ddx{j} + \sum_{i \in N_C \cup N_2} \gamma_i^0 x_i \ddx{i}.
\end{equation}

The coefficients $b_i^0, \gamma_i^0$ found above for $x^0 \in \SSS$ now satisfy
\begin{equation} \label{equ:eight}
  \begin{cases}
    & \gamma_j^0 > 0 \mbox{ for all } j, \cr
    & b_j^0 \geq 0 \mbox{ if } j \notin N_R, \cr
    & b_j^0 > 0 \mbox{ if } j \in (R \cup C) \cap Z,  
  \end{cases}
\end{equation}
where
\begin{equation} \label{equ:nine} 
  N_R \cap Z = \emptyset.
\end{equation}
In the remainder of the paper we shall always assume the conditions (\ref{equ:eight}) hold when dealing with $\SA^0$ whether or not it arises from a particular $x^0 \in \SSS$ as above. As we shall see in Subsection \ref{equ:section_2_1} the $\SA^0$ martingale problem is then well-posed and the solution is a diffusion on
\begin{equation} \label{equ:ten}
  \SSS_0 \equiv \SSS\!\left( x^0 \right) = \{ x \in \RR^d: x_i \geq 0 \mbox{ for all } i \in V \backslash N_R = N_C \cup N_2 \}.
\end{equation}
%
% ............................................................
%
\begin{nota}
In the following we shall use the notation
\[ N_{C2} \equiv N_C \cup N_2. \]
\end{nota}
%
% ............................................................
%

{\it Step 3: A key estimate.} Set 
\begin{align*}
  \SB f :=& \; (\SA - \SA^0) f \\
  =& \; \sum_{j \in V} \left( \tilde{b}_j(x) - b_j^0 \right) \frac{\partial f}{\partial x_j} + \sum_{j \in N_R} \left( \tilde{\gamma}_j(x) - \gamma_j^0 \right) \left( \sum_{i \in C_j} x_i \right) \dfdx{f}{j} \\
  & \; + \, \sum_{i \in N_{C2}} \left( \tilde{\gamma}_i(x) - \gamma_i^0 \right) x_i \dfdx{f}{i}, 
\end{align*}
where 
\begin{align*}
  & \mbox{ for } j \in V,&& \tilde{b}_j(x) = b_j(x), \\
  & \mbox{ for } j \in N_R,&& \tilde{\gamma}_j(x) = \gamma_j(x) x_j, \mbox{ and } \\
  & \mbox{ for } i \in N_{C2},&& \tilde{\gamma}_i(x) = \1_{\{i \in R\}} \gamma_i(x) \sum_{k \in C_i } x_k + \1_{\{i \notin R\}} \gamma_i(x). 
\end{align*}

By using the continuity of the diffusion coefficients of $\SA$ and the localization argument mentioned in Step 1 we may assume that the coefficients of the operator $\SB$ are arbitrarily small, say less than $\eta$ in absolute value. The key step (see Theorem \ref{equ:thm_36_37}) will be to find a Banach space of continuous functions with norm $\normx{\cdot}$, depending on $x^0$, so that for $\eta$ small enough and $\lambda_0 > 0$ large enough,
\begin{equation} \label{equ:fourteen}
  \normx{ \SB R_{\lambda} f } \leq \frac{1}{2} \normx{f}, \ \forall \ \lambda > \lambda_0. 
\end{equation}
Here 
\begin{equation} \label{equ:eleven}
  R_{\lambda} f = \int_0^{\infty} e^{-\lambda s} P_s f ds
\end{equation}
is the resolvent of the diffusion with generator $\SA^0$ and $P_t$ is its semigroup.

The uniqueness of the resolvent of our strong Markov solution will then follow as in \cite{r11} and \cite{r4}. A sketch of the proof is given in Section \ref{equ:section_3}.
%
% ------------------------------------------------------------
%
\begin{rmk} \label{equ:rmk_on_multiplicative_setup}
Under additional restrictions on the structure of the branching network our results carry over to the system of SDEs, where the additive form for the catalysts is replaced by a multiplicative form as follows. For $j \in R$ we now consider
\[ dx_t^{(j)} = b_j(x_t) dt + \sqrt{ 2 \gamma_j(x_t) \left( \prod_{i \in C_j} x_t^{(i)} \right) x_t^{(j)} } dB_t^j \]
instead and for $j \notin R$
\[ dx_t^{(j)} = b_j(x_t) dt + \sqrt{ 2 \gamma_j(x_t) x_t^{(j)} } dB_t^j \]
as before. Indeed, if we impose that for all $j \in R$ we have either
\begin{align*}
  & |C_j| = 1 \mbox{ or } \\
  & |C_j| \geq 2 \mbox{ and for all } i_1 \neq i_2, i_1, i_2 \in C_j: i_1 \in C_{i_2} \mbox{ or } i_2 \in C_{i_1}, 
\end{align*}
and if we assume that Hypothesis \ref{equ:hypothesis_2} holds, then we can show a result similar to Theorem \ref{equ:theorem_4}. 

For instance, the following system of SDEs would be included.
\begin{align*}
  dx_t^{(1)} &= b_1(x_t) dt + \sqrt{ 2 \gamma_1(x_t) x_t^{(2)} x_t^{(3)} x_t^{(1)} } dB_t^1, \\
  dx_t^{(2)} &= b_2(x_t) dt + \sqrt{ 2 \gamma_2(x_t) x_t^{(3)} x_t^{(4)} x_t^{(2)} } dB_t^2, \\
  dx_t^{(3)} &= b_3(x_t) dt + \sqrt{ 2 \gamma_3(x_t) x_t^{(4)} x_t^{(1)} x_t^{(3)} } dB_t^3, \\
  dx_t^{(4)} &= b_4(x_t) dt + \sqrt{ 2 \gamma_4(x_t) x_t^{(1)} x_t^{(2)} x_t^{(4)} } dB_t^4.
\end{align*}
Note in particular, that the additional assumptions on the network ensure that at most one of either the catalysts in $C_j$ or $j$ itself can become zero, so that we obtain the same generator $\SA^0$ as in the setting of additive catalysts if we set $\gamma_j^0 \equiv \gamma_j(x^0) \prod_{i \in \{j\} \cup C_j: x_i^0 > 0} x_i^0$ (cf. the derivation of (\ref{equ:five})). 
\end{rmk}

\begin{rmk}
In \cite{r13} the H\"older condition on the coefficients was successfully removed but the restrictions on the network as stated in \cite{r6} were kept. As both \cite{r6} and \cite{r13} are based upon realizing the SDE in question as a perturbation of a well-understood SDE, one could start extending \cite{r13} to arbitrary networks by using the same generator and semigroup decomposition for the well-understood SDE as considered in this paper. 
\end{rmk}
%
% ============================================================
%
\subsection{Weighted H\"older norms and semigroup norms} \label{equ:section_1_4}%
% ============================================================

In this section we describe the Banach space of functions which will be used in (\ref{equ:fourteen}). In (\ref{equ:fourteen}) we use the resolvent of the generator $\SA^0$ with state space $\SSS_0 = \SSS\!\left( x^0 \right) = \{ x \in \RR^d: x_i \geq 0 \mbox{ for all } i \in N_{C2} \}$. Note in particular that the state space and the realizations of the sets $N_R, \bar{R}_i$ etc. depend on $x^0$.

Next we shall define the {\it Banach space of weighted $\alpha$-H\"older continuous functions on $\SSS_0$}, $\SC_w^{\alpha}(\SSS_0) \subset \SC_b(\SSS_0)$, in two steps. It will be the Banach space we look for and is a modification of the space of weighted H\"older norms used in \cite{r4}.

Let $f: \SSS_0 \rightarrow \RR$ be bounded and measurable and $\alpha \in (0,1)$. As a first step define the following seminorms for $i \in N_C$: 
\begin{align*}
  \ainorm{f}{i} &= \sup\Bigl\{ | f(x+h) - f(x) | \left( |h|^{-\alpha} x_i^{\alpha/2} \vee |h|^{-\alpha/2} \right): \\
  & \qquad \qquad \qquad \qquad |h|>0, h_k = 0 \mbox{ if } k \notin \{i\} \cup \bar{R}_i, x, h \in \SSS_0 \Bigr\}.
\end{align*}
For $j \in N_2$ this corresponds to setting
\begin{align*}
  \ainorm{f}{j} &= \sup\Bigl\{ | f(x+h) - f(x) | \left( |h|^{-\alpha} x_j^{\alpha/2} \vee |h|^{-\alpha/2} \right): \\
  & \qquad \qquad \qquad \qquad \quad h_j > 0, h_k = 0 \mbox{ if } k \neq j, x \in \SSS_0 \Bigr\}. 
\end{align*}
This definition is an extension of the definition in \cite{r6}, p. 329. In our context the definition of $\ainorm{f}{i}, i \in N_C$ had to be extended carefully by replacing the set $R_i$ (in \cite{r6} equal to the set $\bar{R}_i$) by the set $\bar{R}_i \subset R_i$. Observe that the seminorms for $i \in N_C$ and $j \in N_2$ taken together still allow changes in all coordinates (see Figure \ref{equ:resulting_sde}). The definition of $\ainorm{f}{j}, j \in N_2$ furthermore varies slightly from the one in \cite{r6}. We use our definition instead as it enables us to handle the coordinates $i \in N_C, j \in N_2$ without distinction. 
%
% ------------------------------------------------------------
%
\picturefig{0.4}{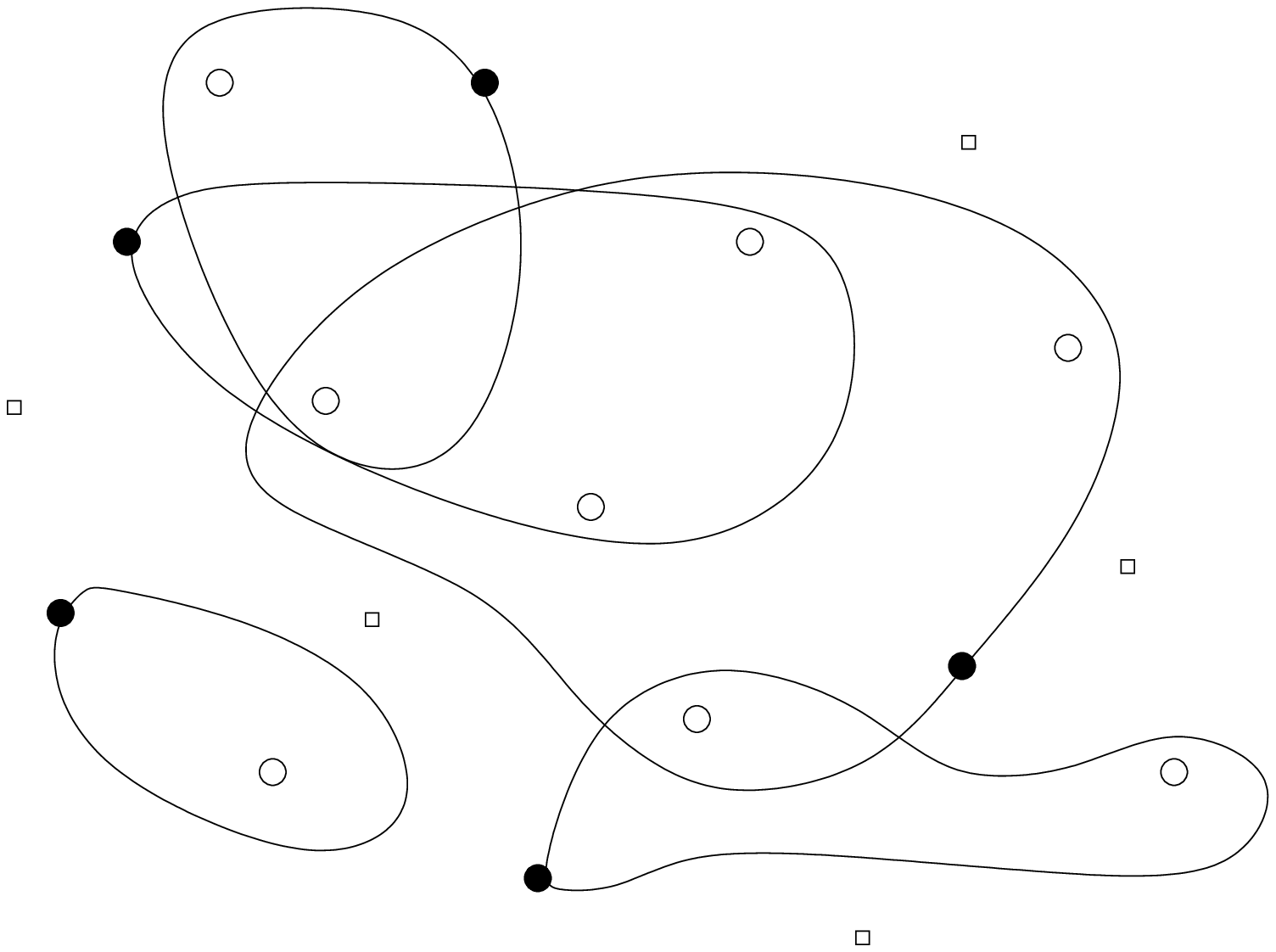}{Decomposition of the system of SDEs: unfilled circles, resp. filled circles, resp. squares are elements of $N_R$, resp. $N_C$, resp. $N_2$. The definition of $\ainorm{f}{i}, i \in N_C$ allows changes in $i$ (filled circles) and the associated $j \in \bar{R}_i$ (unfilled circles), the definition of $\ainorm{f}{j}, j \in N_2$ allows changes in $j \in N_2$ (squares). Hence changes in all vertices are possible.}{resulting_sde} 
%
% ------------------------------------------------------------
%

Secondly, set $I = N_{C2}$. Then let
\[ \wabsv{f} = \max \limits_{j \in I} \ainorm{f}{j}, \qquad \normw{f} = \wabsv{f} + \norm{f}, \]
where $\norm{f}$ is the supremum norm of $f$. $\normw{f}$ is the norm we looked for and its corresponding Banach subspace of $\SC_b(\SSS_0)$ is
\[ \SC_w^{\alpha}(\SSS_0) = \{ f \in \SC_b(\SSS_0): \normw{f} < \infty \}, \]
the Banach space of weighted $\alpha$-H\"older continuous functions on $\SSS_0$. Note that the definition of the seminorms $\ainorm{f}{j}, j \in I$ depends on $N_C, \bar{R}_i$ etc. and hence on $x^0$. Thus $\normw{f}$ depends on $x^0$ as well.

The seminorms $\ainorm{f}{i}$ are weaker norms near the spatial degeneracy at $x_i=0$ where we expect to have less smoothing by the resolvent.

Some more background on the choice of the above norms can be found in \cite{r4}, Section 2. Bass and Perkins (\cite{r4}) consider
\begin{align*}
  &\ainorm{f}{i}^* \equiv \sup\Bigl\{ | f(x+h e_i) - f(x) | |h|^{-\alpha} x_i^{\alpha/2} : h>0, x \in \RR_+^d \Bigr\}, \\
  &|f|_\alpha^* \equiv \sup_{i \leq d} \ainorm{f}{i}^* \mbox{ and } \normx{f}_\alpha^* \equiv |f|_\alpha^* + \norm{f} 
\end{align*}
instead, where $e_i$ denotes the unit vector in the $i$-th direction in $\RR^d$. They show that if $f \in \SC_b(\RR_+^d)$ is uniformly H\"older of index $\alpha \in (0,1]$, and constant outside of a bounded set, then $f \in \SC_w^{\alpha,*} \equiv \{ f \in \SC_b(\RR_+^d): \normx{f}_\alpha^* < \infty \}$. On the other hand, $f \in \SC_w^{\alpha,*}$ implies $f$ is uniformly H\"older of order $\alpha/2$.

As it will turn out later (see Theorem \ref{equ:theorem_19}) our norm $\normw{f}$ is equivalent to another norm, the so-called {\it semigroup norm}, defined via the semigroup $P_t$ corresponding to the generator $\SA^0$ of our process. As we shall mainly investigate properties of the semigroup $P_t$ on $\SC_b(\SSS_0)$ in what follows, it is not surprising that this equivalence turns out to be useful in later calculations. 

In general one defines the semigroup norm (cf. \cite{r2}) for a Markov semigroup $\{P_t\}$ on the bounded Borel functions on $D$ where $D \subset \RR^d$ and $\alpha \in (0,1)$ via
\begin{equation} \label{equ:f_alpha_definition}
  |f|_{\alpha} = \sup \limits_{t>0} \frac{\norm{P_tf - f}}{t^{\alpha/2}}, \qquad \normx{f}_{\alpha} = |f|_{\alpha} + \norm{f}. 
\end{equation}

The associated Banach space of functions is then
\begin{equation} \label{equ:S_alpha_definition}
  \SSS^{\alpha} = \{ f: D \rightarrow \RR: f \mbox{ Borel }, \normx{f}_{\alpha} < \infty \}. 
\end{equation}
%
% ------------------------------------------------------------
%
\begin{conv} \label{equ:convention_1}
Throughout this paper all constants appearing in statements of results and their proofs may depend on a fixed parameter $\alpha \in (0,1)$ and $\{b_j^0, \gamma_j^0: j \in V\}$ as well as on $|V|=d$. By (\ref{equ:eight})
\begin{equation} \label{equ:fifteen}
  M^0 = M^0(\gamma^0,b^0) \equiv \max\limits_{i \in V}\!\left\{ \gamma_i^0 \vee \left( \gamma_i^0 \right)^{-1} \vee \left| b_i^0 \right| \right\} \vee \max \limits_{i \in (R \cup C) \cap Z}\!\left( b_i^0 \right)^{-1} < \infty.
\end{equation}
Given $\alpha \in (0,1)$, $d$ and $0<M<\infty$, we can, and shall, choose the constants to hold uniformly for all coefficients satisfying $M^0 \leq M$. 
\end{conv}
%
% ============================================================
%
\subsection{Outline of the paper}
%
% ============================================================

Proofs only requiring minor adaptations from those in \cite{r6} are omitted in the original version. This paper is a more extensive version of the proofs appearing in Sections \ref{equ:section_2} and \ref{equ:section_3}.

The outline of the paper is as follows. In Section \ref{equ:section_2} the semigroup $P_t$ corresponding to the generator $\SA^0$ on the state space $\SSS_0$, as introduced in (\ref{equ:seven}) and (\ref{equ:ten}), will be investigated. We start with giving an explicit representation of the semigroup in Subsection \ref{equ:section_2_1}. In Subsection \ref{equ:section_appendix} the canonical measure $\NN_0$ is introduced which is used in Subsection \ref{equ:section_2_2} to prove existence and give a representation of derivatives of the semigroup. In Subsections \ref{equ:section_2_3} and \ref{equ:section_2_4} bounds are derived on the $L^\infty$ norms and on the weighted H\"older norms of those differentiation operators applied to $P_t f$, which appear in the definition of $\SA^0$. Furthermore, at the end of Subsection \ref{equ:section_2_3} the equivalence of the weighted H\"older norm and semigroup norm is shown. Finally, in Section~\ref{equ:section_3} bounds on the resolvent $R_\lambda$ of $P_t$ are deduced from the bounds on $P_t$ found in Section \ref{equ:section_2}. The bounds on the resolvent will then be used to obtain the key estimate (\ref{equ:fourteen}). The remainder of Section \ref{equ:section_3} illustrates how to prove the uniqueness of solutions to the martingale problem {\it MP}($\SA$,$\nu$) from this, as in \cite{r6}.
%
% ============================================================
%
\section{PROPERTIES OF THE SEMIGROUP} \label{equ:section_2}
%
% ============================================================
%  
\subsection{Representation of the semigroup} \label{equ:section_2_1}
%
% ============================================================

In this subsection we shall find an explicit representation of the semigroup $P_t$ corresponding to the generator $\SA^0$ (cf. (\ref{equ:seven})) on the state space $\SSS_0$ and further preliminary results. We assume the coefficients satisfy (\ref{equ:eight}) and Convention \ref{equ:convention_1} holds.

Let us have a look at (\ref{equ:five}) and (\ref{equ:six}) again. For $i \in N_C$ or $j \in N_2$ the processes $x_t^{(i)}$ resp. $x_t^{(j)}$ are Feller branching processes (with immigration). If we condition on these processes, the processes $x_t^{(j)}, j \in N_R$ become independent time-inhomogeneous Brownian motions (with drift), whose distributions are well understood. Thus if the associated process is denoted by $x_t = \left\{ x_t^{(j)} \right\}_{j \in N_R \cup N_{C2} } = \left\{ x_t^{(j)} \right\}_{j \in V}$, the semigroup $P_t f$ has the explicit representation
\begin{align} 
  P_t f (x) =& \; \left( \prodm \limits_{i \in N_{C2}} P_{x_i}^i \right) \!\left[ \int_{\RR^{|N_R|}} f\!\left( \left\{ z_j \right\}_{j \in N_R} , \left\{ x_t^{(i)} \right\}_{i \in N_{C2}} \right) \right. \label{equ:eighteen} \\
  & \left. \; \times \, \prod \limits_{j \in N_R} p_{\gamma_j^0 2 I_t^{(j)}} \!\left( z_j - x_j - b_j^0 t \right) dz_j \right], \nonumber
\end{align}
where $P_{x_i}^i$ is the law of the Feller branching immigration process $x^{(i)}$ on \\ $\SC(\RR_+,\RR_+)$, started at $x_i$ with generator
\begin{align*}
  & \SA^i_0 = b_i^0 \frac{\partial}{\partial x} + \gamma_i^0 x \frac{\partial^2}{\partial x^2}, \\
  & I_t^{(j)} = \int_0^t \sum \limits_{i \in C_j} x_s^{(i)} ds, 
\end{align*}
and for $y \in (0,\infty)$
\[ p_y(z) := \frac{e^{-\frac{z^2}{2y}}}{(2 \pi y)^{1/2}}. \]
%
% ------------------------------------------------------------
%
\begin{rmk}
This also shows that the $\SA^0$ martingale problem is well-posed. 
\end{rmk}
%
% ------------------------------------------------------------
%
For $(y,z) = \left( \left\{ y_j \right\}_{j \in N_R} , \left\{ z_i \right\}_{i \in N_{C2}} \right)$ and $x^{N_R} \equiv \left\{ x_j \right\}_{j \in N_R}$, let
\begin{align} 
  G(y,z) &= G_{t,x^{N_R}}(y,z) = G_{t,x^{N_R}} \!\left( \left\{ y_j \right\}_{j \in N_R} , \left\{ z_i \right\}_{i \in N_{C2}} \right) \label{equ:nineteen} \\
  &= \int_{\RR^{|N_R|}} f\!\left( \left\{ u_j \right\}_{j \in N_R} , \left\{ z_i \right\}_{i \in N_{C2}} \right) \prod \limits_{j \in N_R} p_{\gamma_j^0 2 y_j} \!\left( u_j - x_j - b_j^0 t \right) du_j. \nonumber
\end{align}
%
% ------------------------------------------------------------
%
\begin{nota}
In the following we shall use the notations
\[ E^{N_{C2}} = \left( \prodm \limits_{i \in N_{C2}} P_{x_i}^i \right), \ I_t^{N_R} = \left\{ I_t^{(j)} \right\}_{j \in N_R}, \ x_t^{N_{C2}} = \left\{ x_t^{(i)} \right\}_{i \in N_{C2}} \]
and we shall write $E$ whenever we do not specify w.r.t. which measure we integrate. 
\end{nota}
%
% ------------------------------------------------------------
%
Now (\ref{equ:eighteen}) can be rewritten as
\begin{equation} \label{equ:twenty}
  P_t f(x) = E^{N_{C2}} \!\left[ G_{t,x^{N_R}} \!\left( I_t^{N_R} , x_t^{N_{C2}} \right) \right] = E^{N_{C2}} \!\left[ G \!\left( I_t^{N_R} , x_t^{N_{C2}} \right) \right]. 
\end{equation}
%
% ------------------------------------------------------------
%
\pagebreak
\begin{lem} \label{equ:lemma_7}
Let $j \in N_R$, then 

(a) 
\begin{align*}
  & E^{N_{C2}} \!\left[ \sum \limits_{i \in C_j} x_t^{(i)} \right] = \sum \limits_{i \in C_j} \left( x_i + b_i^0 t \right), \\
  & E^{N_{C2}} \!\left[ \Biggl( \sum \limits_{i \in C_j} x_t^{(i)} \Biggr)^2 \right] = \Biggl( \sum \limits_{i \in C_j} x_i \Biggr)^2 + \sum \limits_{i \in C_j} \left( 2 \left( \sum \limits_{k \in C_j} b_k^0 + \gamma_i^0 \right) x_i \right) t \\
  & \qquad \qquad \qquad \qquad \qquad \qquad + \sum \limits_{i \in C_j} \left( \left( \sum \limits_{k \in C_j} b_k^0 + \gamma_i^0 \right) b_i^0 \right) t^2, \\
 & E^{N_{C2}} \!\left[ \Biggl( \sum \limits_{i \in C_j} \left( x_t^{(i)} - x_i \right) \Biggr)^2 \right] = \sum \limits_{i \in C_j} 2 \gamma_i^0 x_i t + \sum \limits_{i \in C_j} \!\left( \left( \sum \limits_{k \in C_j} b_k^0 + \gamma_i^0 \right) b_i^0 \right) t^2
\end{align*}
and
\[ E^{N_{C2}} \!\left[ I_t^{(j)} \right] = E^{N_{C2}} \!\left[ \int_0^t \sum \limits_{i \in C_j} x_s^{(i)} ds \right] = \sum \limits_{i \in C_j} \left( x_i t + \frac{b_i^0}{2} t^2 \right). \]

(b) 
\[ E^{N_{C2}} \!\left[ \left( I_t^{(j)} \right)^{-p} \right] \leq c(p) t^{-p} \min \limits_{i \in C_j} \!\left\{ (t+x_i)^{-p} \right\} \ \forall p>0. \]
\end{lem}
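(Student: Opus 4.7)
The plan is to bound $(I_t^{(j)})^{-p}$ by first reducing to a single one-dimensional Feller branching immigration process, and then applying the explicit Laplace functional of its time integral together with the standard Gamma representation of negative powers.

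First I would exploit the independence of the processes $\{x^{(k)}\}_{k \in N_{C2}}$ under $E^{N_{C2}}$ together with the pointwise inequality $I_t^{(j)} = \int_0^t \sum_{i \in C_j} x_s^{(i)} ds \geq \int_0^t x_s^{(i)} ds$ valid for any fixed $i \in C_j$. Taking expectations and then the minimum over $i \in C_j$ at the very end, this reduces the lemma to the single-process estimate
\[ E_{x_i}^i\!\left[\Bigl(\int_0^t x_s\,ds\Bigr)^{-p}\right] \leq c(p)\, t^{-p}(t+x_i)^{-p}, \]
where under $E^i_{x_i}$ the process $x$ is the one-dimensional Feller branching immigration process with parameters $(b_i^0,\gamma_i^0)$ started from $x_i$. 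Note that $i \in C_j$ together with $j \in N_R$ forces $i \in N_C \cap Z$, so by (\ref{equ:eight}) the coefficient $b_i^0$ is strictly positive, a fact which will be essential for inverse-moment finiteness at $x_i = 0$.

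Second, I would eliminate the $t$-dependence by Brownian scaling. Setting $Z_u := x_{tu}/t$, a direct computation shows that $Z$ again solves the Feller branching immigration SDE with the same parameters $(b_i^0,\gamma_i^0)$ and started from $y := x_i/t$, and $\int_0^t x_s\,ds = t^2 \int_0^1 Z_u\,du$. Since $t^{-p}(t+x_i)^{-p} = t^{-2p}(1+y)^{-p}$, the problem reduces to the scale-free bound
\[ E^i_y\!\left[\Bigl(\int_0^1 Z_u\,du\Bigr)^{-p}\right] \leq c(p)(1+y)^{-p},\qquad y \geq 0. \]
To handle this, I would combine the identity $z^{-p} = \Gamma(p)^{-1}\int_0^\infty \mu^{p-1} e^{-\mu z}\,d\mu$ with the classical Feynman--Kac / Riccati computation (using the ansatz $v(s,z) = \exp(-z\phi(s) - b_i^0\psi(s))$), which yields
\[ E_y^i\!\left[\exp\!\Bigl(-\mu \int_0^1 Z_u\,du\Bigr)\right] = \bigl(\cosh\sqrt{\mu\gamma_i^0}\bigr)^{-b_i^0/\gamma_i^0} \exp\!\Bigl(-y\sqrt{\mu/\gamma_i^0}\,\tanh\sqrt{\mu\gamma_i^0}\Bigr). \]

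The main obstacle is then bounding the resulting $\mu$-integral uniformly in $y \geq 0$. I would split at $\mu \asymp 1$: on $\{\mu \leq 1\}$ the Taylor expansion $\tanh\sqrt{\mu\gamma_i^0} \sim \sqrt{\mu\gamma_i^0}$ turns the exponent into essentially $-cy\mu$, and the resulting Gamma-type integral evaluates to a constant multiple of $(1+y)^{-p}$. On $\{\mu \geq 1\}$ the prefactor $(\cosh\sqrt{\mu\gamma_i^0})^{-b_i^0/\gamma_i^0}$ decays exponentially in $\sqrt{\mu}$ (this is precisely where $b_i^0 > 0$ is used), so this part contributes a bounded constant; and for large $y$ the extra factor $\exp(-y\sqrt{\mu/\gamma_i^0}\tanh\sqrt{\mu\gamma_i^0}) \leq e^{-cy}$ absorbs it into $(1+y)^{-p}$. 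Combining the two regimes yields the claimed inequality and, via the reduction above, completes the proof.
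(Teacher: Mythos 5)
Your proposal addresses only part (b); part (a), which the paper disposes of via the corresponding moment formulas for a single Feller branching immigration process from \cite{r6} plus independence, is a routine computation that you should at least acknowledge. For (b), your argument is correct and follows essentially the same route as the paper: the paper's proof writes $z^{-p}$ via the Gamma integral, factorizes $E^{N_{C2}}[e^{-u^{-1}I_t^{(j)}}]$ over $i \in C_j$ by independence, bounds all but one factor by $1$, and then refers to \cite{r6} for the one-dimensional estimate; you instead use the pointwise inequality $I_t^{(j)} \geq \int_0^t x_s^{(i)}\,ds$ before taking expectations, which is an even more direct way to reach the same one-dimensional reduction. The scale-invariance reduction, the Feynman--Kac/Riccati formula for $E_y[\exp(-\mu\int_0^1 Z_u\,du)]$, and the split of the $\mu$-integral at $\mu \asymp 1$ are exactly the content the paper delegates to Lemma 7(b) of \cite{r6}, and your use of $b_i^0 > 0$ (guaranteed because $i \in C_j$ with $j \in N_R$ forces $i \in C \cap Z$, so (\ref{equ:eight}) applies) to get the $\cosh$-decay at large $\mu$ is exactly the right place where that hypothesis enters, a point the paper's note after the lemma emphasizes. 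In short: correct, same essential approach, with your reduction step marginally more elementary; just add a sentence for (a).
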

%
% ------------------------------------------------------------
%
\begin{note}
Observe that the requirement $b_i^0>0$ if $i \in (R \cup C) \cap Z$ as in (\ref{equ:eight}) is crucial for Lemma \ref{equ:lemma_7}(b). As $i \in C_j, j \in N_R$ implies $i \in C \cap Z$, (\ref{equ:eight}) guarantees $b_i^0>0$. The bound (b) cannot be applied to $i \in N_2$ in general, as (\ref{equ:eight}) only gives $b_i^0 \geq 0$ in these cases.
\end{note}
%
% ------------------------------------------------------------

{\it Proof of (a).} The first three results follow from Lemma 7(a) in \cite{r6} together with the independence of the Feller-diffusions under consideration. 

{\it Proof of (b).} Proceeding as in the proof of Lemma 7(b) in \cite{r6} we obtain
\begin{align*}
  E^{N_{C2}} \!\left[ \left( I_t^{(j)} \right)^{-p} \right] &\leq c_p e \int_0^{\infty} E^{N_{C2}} \!\left[ e^{-u^{-1} I_t^{(j)}} \right] u^{-p-1} du \\
&\leq c_p e \min \limits_{i \in C_j} \!\left\{ \int_0^{\infty} P_{x_i}^i \!\left[ e^{-u^{-1} I_t^{(i)}} \right] u^{-p-1} du \right\}
\end{align*}
as $I_t^{(j)} = \sum_{i \in C_j} \int_0^t x_s^{(i)} ds \equiv \sum_{i \in C_j} I_t^{(i)}$, where the Feller-diffusions under consideration are independent. Now we can proceed as in Lemma 7(b) of \cite{r6} to obtain the desired result. \qed %
% ------------------------------------------------------------
%
\begin{lem} \label{equ:lemma_11}
Let $G_{t,x^{N_R}}$ be as in (\ref{equ:nineteen}). Then 

(a) for $j \in N_R$
\begin{equation} \label{equ:thirtysix}
  \left| \frac{\partial G_{t,x^{N_R}}}{\partial x_j} \!\left( \left\{ y_j \right\}_{j \in N_R} , \left\{ z_i \right\}_{i \in N_{C2}} \right) \right| = \left| \frac{\partial G_{t,x^{N_R}}}{\partial x_j} (y,z) \right| \leq \norm{f} (\gamma_j^0 y_j)^{-1/2},
\end{equation}
and more generally for any $k \in \NN$, there is a constant $c_k$ such that
\[ \left| \frac{\partial^k G_{t,x^{N_R}}}{\partial x_j^k} (y,z) \right| \leq c_k \norm{f} y_j^{-k/2}. \]

(b) For $j \in N_R$
\begin{equation} \label{equ:thirtyeight}
  \left| \frac{\partial G_{t,x^{N_R}}}{\partial y_j} (y,z) \right| \leq c_1 \norm{f} y_j^{-1}.
\end{equation}
More generally there are constants $c_k, k \in \NN$ such that for $l_1, l_2, j_1, j_2 \in N_R$,
\[ \left| \frac{\partial^{m_1+m_2+k_1+k_2} G_{t,x^{N_R}}}{\partial x_{l_1}^{m_1} \partial x_{l_2}^{m_2} \partial y_{j_1}^{k_1} \partial y_{j_2}^{k_2} } (y,z) \right| \leq c_{m_1+m_2+k_1+k_2} \norm{f} y_{l_1}^{-m_1/2} y_{l_2}^{-m_2/2} y_{j_1}^{-k_1} y_{j_2}^{-k_2} \]
for all $m_1, m_2, k_1, k_2 \in \NN$. 

(c) Let $y^{N_R} = \left\{ y_j \right\}_{j \in N_R}$ and $z^{N_{C2}} = \left\{ z_i \right\}_{i \in N_{C2}}$, then for all $z^{N_{C2}}$ with $z_i \geq 0$, $i \in N_{C2}$ we have that $\left( x^{N_R},y^{N_R} \right) \rightarrow G_{t,x^{N_R}} \!\left( y^{N_R} , z^{N_{C2}} \right)$ is $\SC^3$ on $\RR^{|N_R|} \times (0,\infty)^{|N_R|}$. 
\end{lem}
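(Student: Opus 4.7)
The plan is to exploit the fact that $G_{t,x^{N_R}}(y,z)$, as defined in (\ref{equ:nineteen}), is a Gaussian convolution of $f(\cdot,z)$ in the $N_R$ coordinates, with independent one-dimensional Gaussian kernels whose $j$th factor has variance $2\gamma_j^0 y_j$ and mean $x_j + b_j^0 t$. Since these kernels enter as a product, derivatives in $(x_j, y_j)$ for different $j \in N_R$ simply act on different factors, and the only analytic ingredients needed are translation invariance of the Gaussian in the $x$-variable, the heat-equation identity in the $y$-variable, and the classical one-dimensional $L^1$ derivative estimate
\[ \int \left| \frac{\partial^k}{\partial w^k} p_\sigma(w) \right| dw \leq c_k \sigma^{-k/2}. \]

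For part (a), the identity $\partial_{x_j} p_{\gamma_j^0 2 y_j}(u_j - x_j - b_j^0 t) = -\partial_{u_j} p_{\gamma_j^0 2 y_j}(u_j - x_j - b_j^0 t)$ lets me differentiate under the integral and bound
\[ \left|\frac{\partial^k G_{t,x^{N_R}}}{\partial x_j^k}(y,z)\right| \leq \norm{f} \int \left|\frac{\partial^k}{\partial u_j^k} p_{\gamma_j^0 2 y_j}(u_j - x_j - b_j^0 t)\right| du_j, \]
with the remaining Gaussian factors integrating to one. Applying the $L^1$ estimate with $\sigma = 2\gamma_j^0 y_j$ and absorbing $\gamma_j^0$ into the constant via Convention \ref{equ:convention_1} yields the desired bound $c_k \norm{f} y_j^{-k/2}$, which gives (\ref{equ:thirtysix}) when $k=1$.

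For part (b), the heat-equation identity
\[ \frac{\partial}{\partial y_j}\, p_{\gamma_j^0 2 y_j}(w) = \gamma_j^0 \frac{\partial^2}{\partial w^2}\, p_{\gamma_j^0 2 y_j}(w), \]
which follows from $\partial_\sigma p_\sigma = \tfrac12 \partial_w^2 p_\sigma$, converts each $y_j$-derivative into two extra spatial derivatives on the same Gaussian factor. Combining this with the translation argument from (a), the mixed derivative in the statement becomes an integral of $f$ against a product in which the $l_1, l_2, j_1, j_2$ factors have been differentiated $m_1, m_2, 2k_1, 2k_2$ times, respectively, in their spatial variables; if any of these indices coincide the derivatives compose on a single factor, and if they differ they act on separate factors. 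A factor-by-factor application of the $L^1$ bound yields $y_{l_1}^{-m_1/2} y_{l_2}^{-m_2/2} y_{j_1}^{-k_1} y_{j_2}^{-k_2}$ with a constant depending only on the orders and on $\gamma_j^0$, giving (\ref{equ:thirtyeight}) and the general bound. Part (c) then follows since all mixed partial derivatives of $G_{t,x^{N_R}}(y,z)$ in $(x^{N_R},y^{N_R})$ up to order three are dominated by expressions of the form $c\norm{f} \prod_{j \in N_R} y_j^{-\alpha_j}$, uniformly on compact subsets of $\RR^{|N_R|} \times (0,\infty)^{|N_R|}$; dominated convergence then justifies differentiation under the integral and gives continuity of these derivatives there.

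The main obstacle is not conceptual but organizational: one must track which Gaussian factor each derivative acts on, and handle the possible coincidences among the indices $l_1, l_2, j_1, j_2$ without repeating essentially the same calculation. The reduction outlined above to pure spatial derivatives on single Gaussian factors, combined with the product structure of the kernel, renders this bookkeeping transparent and makes the proof essentially parallel to that of Lemma 11 of \cite{r6}.
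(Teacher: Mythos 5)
Your proposal is correct and follows the same route the paper intends, which is simply to mimic Lemma 11 of \cite{r6} by exploiting the product form of the Gaussian density: translate $x_j$-derivatives into $u_j$-derivatives, convert $y_j$-derivatives into second $u_j$-derivatives via the heat equation identity, apply the one-dimensional $L^1$ bound $\int|\partial_w^k p_\sigma| \, dw \le c_k\sigma^{-k/2}$ on the single affected factor while the other factors integrate to one, and obtain (c) by dominated convergence from the resulting locally uniform bounds. The bookkeeping you describe for coinciding indices is exactly what makes the mixed-derivative estimate in (b) come out in the stated multiplicative form.
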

%
% ------------------------------------------------------------
%
\begin{proof}
This proceeds as in \cite{r6}, Lemma 11, using the product form of the density. 
\end{proof}
%
% ------------------------------------------------------------
%
\begin{lem} \label{equ:lemma_12}
If $f$ is a bounded Borel function on $\SSS_0$ and $t>0$, then $P_t f \in \SC_b \!\left( \SSS_0 \right)$ with
\[ \left| P_t f (x) - P_t f \!\left( x' \right) \right| \leq c \norm{f} t^{-1} \left| x-x' \right|. \]
\end{lem}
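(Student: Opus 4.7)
The strategy is to use the explicit representation $P_tf(x) = E^{N_{C2}}[G_{t,x^{N_R}}(I_t^{N_R}, x_t^{N_{C2}})]$ from \eqref{equ:twenty} and to reduce the Lipschitz bound, by the triangle inequality along the coordinate axes, to showing $|P_tf(x + he_k) - P_tf(x)| \leq c \norm{f} t^{-1} h$ for each $k \in V$ and each $h > 0$ with $x, x + he_k \in \SSS_0$. The two cases $k \in N_R$ and $k \in N_{C2}$ must be handled differently, because $x^{N_R}$ enters $P_tf$ only through the Gaussian mean inside $G$, whereas $x^{N_{C2}}$ enters only as the initial condition of the Feller branching (with immigration) coordinate $x^{(k)}$. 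Boundedness $\norm{P_tf} \leq \norm{f}$ is immediate from the representation, and continuity follows from the Lipschitz bound.

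For $k \in N_R$, I differentiate under the expectation:
\[
\partial_{x_k} P_tf(x) = E^{N_{C2}}\bigl[\partial_{x_k} G_{t,x^{N_R}}(I_t^{N_R}, x_t^{N_{C2}})\bigr].
\]
Lemma \ref{equ:lemma_11}(a) gives $|\partial_{x_k} G| \leq \norm{f}(\gamma_k^0 y_k)^{-1/2}$, and Lemma \ref{equ:lemma_7}(b) with $p = 1/2$ yields $E^{N_{C2}}[(I_t^{(k)})^{-1/2}] \leq c\, t^{-1/2}\min_{i \in C_k}(t + x_i)^{-1/2} \leq c t^{-1}$ uniformly in $x \in \SSS_0$, giving the desired $|\partial_{x_k} P_tf| \leq c\norm{f} t^{-1}$.

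For $k \in N_{C2}$, I exploit the additivity of Feller branching with immigration: on an enlarged probability space there is a coupling with an independent pure (no-immigration) Feller branching $Z^h$ of coefficient $\gamma_k^0$ started from $h$, under which $x_t^{(k)}(x_k + h) = x_t^{(k)}(x_k) + Z_t^h$, with all other coordinates $x^{(l)}$, $l \neq k$, unchanged. Thus the perturbation contributes $\Delta x_k := Z_t^h$ in the $N_{C2}$-argument of $G$ and $\Delta I_j := \1_{\{k \in C_j\}}\int_0^t Z_s^h\,ds$ in the $N_R$-argument. I split
\[
G(I + \Delta I, x + \Delta x) - G(I, x) = [G(I + \Delta I, x + \Delta x) - G(I + \Delta I, x)] + [G(I + \Delta I, x) - G(I, x)].
\]
The first bracket perturbs only the $z_k$-coordinate, where $G$ is merely bounded by $\norm{f}$ (since $f$ is only Borel in its $z$-arguments), so I estimate it by $2\norm{f}\1_{\{Z_t^h > 0\}}$; the classical extinction formula $P(Z_t^h > 0) = 1 - e^{-h/(\gamma_k^0 t)} \leq c h t^{-1}$ gives a contribution $\leq c\norm{f} h t^{-1}$. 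The second bracket perturbs only the $y_j$ with $k \in C_j$; since $\Delta I_j \geq 0$, Lemma \ref{equ:lemma_11}(b) and the mean value inequality give $|G(I + \Delta I, x) - G(I, x)| \leq \sum_{j:k \in C_j} c\norm{f} I_j^{-1} \Delta I_j$. Independence of $Z^h$ from the Feller coordinates $\{x^{(l)}\}_{l \in N_{C2}}$ lets me factor the expectation; Lemma \ref{equ:lemma_7}(b) with $p = 1$ yields $E[I_j^{-1}] \leq c t^{-2}$, while the martingale property of pure Feller branching gives $E[\Delta I_j] = h t$, so this bracket too contributes $\leq c\norm{f} h t^{-1}$.

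The main obstacle is the $k \in N_{C2}$ case. Because $f$ is only Borel measurable, $G$ has no smoothness in its $z$-argument, so one cannot differentiate $P_t f$ in $x_k$ naively. The additivity coupling is essential: it isolates the perturbation into an independent pure branching process whose extinction probability is $O(h/t)$ (handling the non-smooth $z$-perturbation) and whose integrated mass has mean $h t$, which, paired with the $O(t^{-2})$ inverse-moment bound on $I_j$ from Lemma \ref{equ:lemma_7}(b), handles the smoother $y$-perturbation via Lemma \ref{equ:lemma_11}(b).
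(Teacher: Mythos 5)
Your proof is correct and follows essentially the same route as the paper's: the $N_R$ case uses Lemma \ref{equ:lemma_11}(a) and Lemma \ref{equ:lemma_7}(b), and the $N_{C2}$ case uses the additivity coupling with an independent no-immigration Feller branching process, bounding the $z_k$-perturbation by the extinction probability \eqref{equ:twentyeight} and the $y_j$-perturbations (over $j \in \bar{R}_k$) via Lemma \ref{equ:lemma_11}(b), independence, and Lemma \ref{equ:lemma_7}(a),(b). The only cosmetic difference is that you group the two brackets in the opposite order from the paper and invoke the multivariate mean value inequality where the paper iterates the one-dimensional triangle inequality explicitly.
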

%
% ------------------------------------------------------------
%
\begin{proof}
The outline of the proof is as in the proof of \cite{r6}, Lemma 12. We shall nevertheless show the proof in detail as it illustrates some basic notational issues, which will appear again in later theorems. Note in particular the frequent use of the triangle inequality resulting in additional sums of the form $\sum_{j: j \in \bar{R}_{i_0}}$ in the second part of the proof.

Using (\ref{equ:twenty}), we have for $x, x' \in \RR^{N_R}$,
\begin{align} 
  & \left| P_t f \!\left( x , x^{N_{C2}} \right) - P_t f \!\left( x' , x^{N_{C2}} \right) \right| \label{equ:fourtytwo} \\ 
  & = \left| E^{N_{C2}} \!\left[ G_{t,x} \!\left( I_t^{N_R} , x_t^{N_{C2}} \right) - G_{t,x'} \!\left( I_t^{N_R} , x_t^{N_{C2}} \right) \right] \right| \nonumber \\
  & \leq \; \norm{f} \! \sum \limits_{j \in N_R} \! \frac{| x_j-x_j' |}{\sqrt{\gamma_j^0}} E^{N_{C2}} \!\left[ \left( I_t^{(j)} \right)^{-1/2} \right] \mbox{ (by (\ref{equ:thirtysix}))} \nonumber \\
  & \leq c \norm{f} \! \sum \limits_{j \in N_R} \! \frac{| x_j-x_j' |}{\sqrt{\gamma_j^0}} t^{-1/2} \min \limits_{i \in C_j} \!\left\{ (t+x_i)^{-1/2} \right\} \mbox{ (by Lemma \ref{equ:lemma_7}(b)} ) \nonumber \\
  & \leq c \norm{f} t^{-1} \sum \limits_{j \in N_R} \! | x_j-x_j' |. \nonumber
\end{align}

Next we shall consider $x,x'=x+he_{i_0} \in \RR^{N_{C2}}$ where $i_0 \in N_{C2}$ is arbitrarily fixed. Assume $h>0$ and let $x^h$ denote an independent copy of $x^{(i_0)}$ starting at $h$ but with $b_{i_0}^0 = 0$. Then $x^{(i_0)} + x^h$ has law $P_{x_{i_0}+h}^{i_0}$ (additive property of Feller branching processes) and so if $I_h(t) = \int_0^t x_s^h ds$,
\begin{align*}
  & \left| P_t f \!\left( x^{N_R} , x' \right) - P_t f \!\left( x^{N_R} , x \right) \right| \\
  & = \left| E^{N_{C2}} \!\left[ G_{t,x^{N_R}} \biggl( \left\{ I_t^{(j)} + \1_{\{i_0 \in C_j\}} I_h(t) \right\}_{j \in N_R} , \left\{ x_t^{i} + \1_{\{i=i_0\}} x_t^h \right\}_{i \in N_{C2}} \biggr) \right. \right. \\
  & \left. \left. \qquad \qquad - G_{t,x^{N_R}} \!\left( \left\{ I_t^{(j)} \right\}_{j \in N_R} , x_t^{N_{C2}} \right) \right] \right|.
\end{align*}
For what follows it is important to observe that 
\[ \left\{ j \in N_R: i_0 \in C_j \right\} = \left\{ j: j \in \bar{R}_{i_0} \right\}, \] 
having made the definition of $\bar{R}_i$ necessary. Next we shall use the triangle inequality to first sum up changes in the $j$th coordinates (where $j \in N_R$ s.t. $i_0 \in C_j$) separately in increasing order, followed by the change in the coordinate $i_0$. If $T_h = \inf\{ t \geq 0: x_t^h = 0 \}$ we thus obtain as a bound for the above (recall that $e_k$ denotes the unit vector in the $k$th direction):
\begin{align*}
  & \sum \limits_{j: j \in \bar{R}_{i_0}} c \norm{f} E^{N_{C2}} \!\left[ I_h(t) \left( I_t^{(j)} \right)^{-1} \right] + 2 \norm{f} E[T_h>t] \\
  & = \sum \limits_{j: j \in \bar{R}_{i_0}} c \norm{f} E^{N_{C2}} \!\left[ I_h(t) \right] E^{N_{C2}} \!\left[ \left( I_t^{(j)} \right)^{-1} \right] + 2 \norm{f} E[T_h>t]
\end{align*}
by (\ref{equ:thirtyeight}) and as $\norm{G} \leq \norm{f}$ by the definition of $G$. Next we shall use that $E[T_h>t] \leq \frac{h}{ t \gamma_{i_0}^0 }$ (for reference see equation (\ref{equ:twentyeight}) in Section \ref{equ:section_appendix}). Together with Lemma \ref{equ:lemma_7}(a), (b) we may bound the above by
\[ \sum \limits_{j: j \in \bar{R}_{i_0}} c \norm{f} h t t^{-1} \min \limits_{i \in C_j} \!\left\{ (t+x_i)^{-1} \right\} + 2 \norm{f} \frac{h}{t \gamma_{i_0}^0} \leq c \norm{f} h t^{-1}. \]
The case $x'=x+he_i, i \in N_{C2}$ follows similarly. Note that for $i \in N_2$ only the second term in the above bound is nonzero as the sum is taken over an empty set ($\bar{R}_i = \emptyset$ for $i \in N_2$). Together with (\ref{equ:fourtytwo}) (recall that the $1$-norm and Euclidean norm are equivalent) we obtain the result via triangle inequality. 
\end{proof}
%
% ------------------------------------------------------------
%

Finally, we give elementary calculus inequalities that will be used below. 
%
% ------------------------------------------------------------
%
\begin{lem} \label{equ:lemma_13}
Let $g: \RR_+^d \rightarrow \RR$ be $\SC^2$. Then for all $\Delta, \Delta' > 0, y \in \RR_+^d$ and $I_1, I_2 \subset \{ 1,\ldots,d \}$,
\begin{align*}
  & \frac{ | g(y + \Delta \sum_{i_1 \in I_1} e_{i_1} + \Delta' \sum_{i_2 \in I_2} e_{i_2}) - g(y + \Delta \sum_{i_1 \in I_1} e_{i_1})}{(\Delta \Delta')} \\
  & \; \frac{ - g(y + \Delta' \sum_{i_2 \in I_2} e_{i_2}) + g(y) | }{(\Delta \Delta')} \\
  & \leq \sup \limits_{\{ y' \in \prod_{i \in \{ 1,\ldots,d \}} [y_i,y_i+\Delta+\Delta'] \}} \sum_{i_1 \in I_1} \sum_{i_2 \in I_2} \left| \frac{\partial^2}{\partial y_{i_1} \partial y_{i_2}} g(y') \right|. 
\end{align*}
Also let $f: \RR_+^d \rightarrow \RR$ be $\SC^3$. Then for all $\Delta_1, \Delta_2, \Delta_3 > 0, y \in \RR_+^d$ and $I_1, I_2, I_3 \subset \{ 1,\ldots,d \}$,
\begin{align*}
  & \frac{ | f(y + \Delta_1 \sum_{i_1 \in I_1} e_{i_1} + \Delta_2 \sum_{i_2 \in I_2} e_{i_2} + \Delta_3 \sum_{i_3 \in I_3} e_{i_3}) }{(\Delta_1 \Delta_2 \Delta_3)} \\ 
  & \; \frac{ - f(y + \Delta_1 \sum_{i_1 \in I_1} e_{i_1} + \Delta_3 \sum_{i_3 \in I_3} e_{i_3}) + f(y + \Delta_2 \sum_{i_2 \in I_2} e_{i_2}) }{(\Delta_1 \Delta_2 \Delta_3)} \\
  & \; \frac{ - f(y + \Delta_2 \sum_{i_2 \in I_2} e_{i_2} + \Delta_3 \sum_{i_3 \in I_3} e_{i_3}) + f(y + \Delta_3 \sum_{i_3 \in I_3} e_{i_3}) }{(\Delta_1 \Delta_2 \Delta_3)} \\
  & \; \frac{ - f(y + \Delta_1 \sum_{i_1 \in I_1} e_{i_1} + \Delta_2 \sum_{i_2 \in I_2} e_{i_2}) + f(y + \Delta_1 \sum_{i_1 \in I_1} e_{i_1}) - f(y) |}{(\Delta_1 \Delta_2 \Delta_3)} \\
  & \leq \sup \limits_{\{ y' \in \prod_{i \in \{ 1,\ldots,d \}} [y_i,y_i+\Delta_1+\Delta_2+\Delta_3] \}} \sum_{i_1 \in I_1} \sum_{i_2 \in I_2} \sum_{i_3 \in I_3} \left| \frac{\partial^3}{\partial y_{i_1} \partial y_{i_2} \partial y_{i_3}} f(y') \right|. 
\end{align*}
\end{lem}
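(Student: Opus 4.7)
The approach is to realize both expressions as iterated mixed finite differences and rewrite them, via the fundamental theorem of calculus, as iterated integrals of the corresponding mixed partial derivative; the chain rule then converts the derivative in the integration variables into the sum of partial derivatives in $y_{i_k}$, $i_k \in I_k$, that appears on the right-hand side.

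For the first inequality, introduce $F(s_1,s_2) := g(y + s_1 v_1 + s_2 v_2)$ where $v_k := \sum_{i_k \in I_k} e_{i_k}$. Then the numerator displayed in the statement equals $F(\Delta,\Delta') - F(\Delta,0) - F(0,\Delta') + F(0,0)$, which, by two applications of the fundamental theorem of calculus, is the same as $\int_0^{\Delta}\!\int_0^{\Delta'} \partial_{s_1}\partial_{s_2} F(s_1,s_2)\,ds_2\,ds_1$. By the chain rule, $\partial_{s_1}\partial_{s_2}F(s_1,s_2) = \sum_{i_1 \in I_1}\sum_{i_2 \in I_2} \frac{\partial^2 g}{\partial y_{i_1}\partial y_{i_2}}(y + s_1 v_1 + s_2 v_2)$. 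For $(s_1,s_2) \in [0,\Delta] \times [0,\Delta']$ the argument lies componentwise in $[y_i, y_i + \Delta + \Delta']$. Pulling the absolute value inside the integrals, bounding the integrand by the supremum over this cube, and then dividing by $\Delta\Delta'$ produces the first claim.

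For the second inequality, set $F(s_1,s_2,s_3) := f(y + s_1 v_1 + s_2 v_2 + s_3 v_3)$ and verify, by applying the fundamental theorem of calculus successively in $s_3$, $s_2$, $s_1$, that the $8$-term alternating sum in the numerator equals $\int_0^{\Delta_1}\!\int_0^{\Delta_2}\!\int_0^{\Delta_3}\partial_{s_1}\partial_{s_2}\partial_{s_3}F(s_1,s_2,s_3)\,ds_3\,ds_2\,ds_1$. The only bookkeeping point is confirming the precise sign pattern listed in the statement against the standard parity pattern $\pm$ produced by iterated differencing at the vertices of $\{0,\Delta_1\}\times\{0,\Delta_2\}\times\{0,\Delta_3\}$; this is a direct check. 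Then the chain rule rewrites the triple derivative as $\sum_{i_1 \in I_1}\sum_{i_2 \in I_2}\sum_{i_3 \in I_3} \frac{\partial^3 f}{\partial y_{i_1}\partial y_{i_2}\partial y_{i_3}}$ evaluated along the diagonal path, and the same supremum-bounding argument on $\prod_i [y_i, y_i + \Delta_1 + \Delta_2 + \Delta_3]$ yields the bound after dividing by $\Delta_1\Delta_2\Delta_3$.

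There is no substantive analytical obstacle: the statement is the multivariate, coordinate-direction-averaged extension of the one-variable identity $\int_0^{\Delta}\int_0^{\Delta'}\partial_{s_1}\partial_{s_2}F = F(\Delta,\Delta') - F(\Delta,0) - F(0,\Delta') + F(0,0)$. The only point to watch is combinatorial: that the parameterization collapses $\partial_{s_k}$ into the sum $\sum_{i_k \in I_k}\partial_{y_{i_k}}$ and that the diagonal path $s \mapsto y + sv_k$ keeps each coordinate in $[y_i, y_i + (\sum_k \Delta_k)]$ so that a single supremum over the enclosing cube suffices. The $\SC^2$, respectively $\SC^3$, hypothesis is used only to make the mixed partials continuous and the iterated integrals well-defined.
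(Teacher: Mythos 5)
Your proof is correct. You realize both expressions as iterated mixed finite differences of the reparameterized map $F(s_1,\dots)=f\bigl(y+\sum_k s_k v_k\bigr)$ with $v_k=\sum_{i_k\in I_k}e_{i_k}$, convert them via the fundamental theorem of calculus to iterated integrals of $\partial_{s_1}\cdots\partial_{s_k}F$, expand by the chain rule into the double (resp.\ triple) sum of mixed partials, and bound by the supremum over the cube; the coordinate of the integrand always stays in $[y_i, y_i+\sum_k\Delta_k]$ since each $s_k\in[0,\Delta_k]$ contributes at most $\Delta_k$ per coordinate (even when the $I_k$ overlap). The sign check on the $8$ vertices of $\{0,\Delta_1\}\times\{0,\Delta_2\}\times\{0,\Delta_3\}$ matches the statement. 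The paper takes a slightly different route: it cites the single-coordinate version of the lemma from \cite{r6} (where each $I_k$ is a singleton) and reduces the general case to it by the triangle inequality, telescoping the multi-coordinate increments $\Delta\sum_{i_k\in I_k}e_{i_k}$ into one-coordinate steps and summing the resulting bounds over $I_1\times I_2$ (resp.\ $I_1\times I_2\times I_3$). Your diagonal-parameterization FTC argument avoids the telescoping bookkeeping and the appeal to the cited lemma, proving the result self-containedly in one pass; the paper's route is equally valid and keeps closer to its stated strategy of reducing multi-coordinate changes to one-dimensional ones.
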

%
% ------------------------------------------------------------
%
\begin{proof}
This is an extension of \cite{r6}, Lemma 13, using the triangle inequality to split the terms under consideration into sums of differences in only one coordinate at a time. 
\end{proof}
%
% ============================================================
%
\subsection{Decomposition techniques} \label{equ:section_appendix}
%
% ============================================================

In this subsection we cite relevant material from \cite{r6}, namely Lemma 8, Proposition 9 and Lemma 10. Proofs and references can be found in \cite{r6}. Further background and motivation on the processes under consideration may be found in \cite{r12}, Section II.7.

Let $\{P_x^0: x \geq 0\}$ denote the laws of the Feller branching process $X$ with no immigration (equivalently, the $0$-dimensional squared Bessel process) with generator $\SL^0 f(x) = \gamma x f''(x)$. Recall that the Feller branching process $X$ can be constructed as the weak limit of a sequence of rescaled critical Galton-Watson branching processes.

If $\omega \in \SC(\RR_+,\RR_+)$ let $\zeta(\omega) = \inf\{ t>0: \omega(t) = 0 \}$. There is a unique $\sigma$-finite measure $\NN_0$ on 
\begin{equation} \label{equ:def_C_ex}
  \SC_{ex} = \{ \omega \in \SC(\RR_+,\RR_+): \omega(0) = 0, \zeta(\omega) > 0, \omega(t) = 0 \ \forall t \geq \zeta(\omega) \} 
\end{equation}
such that for each $h>0$, if $\Xi^h$ is a Poisson point process on $\SC_{ex}$ with intensity $h \NN_0$, then
\begin{equation} \label{equ:twentyfour}
  X = \int_{\SC_{ex}} \nu \Xi^h (d\nu) \mbox{ has law } P_h^0.
\end{equation}
Citing \cite{r12}, $\NN_0$ can be thought of being the time evolution of a cluster given that it survives for some positive length of time. The representation (\ref{equ:twentyfour}) decomposes $X$ according to the ancestors at time $0$.

Moreover we also have
\begin{equation} \label{equ:twentyfive}
  \NN_0 [\nu_{\delta} > 0] = (\gamma \delta)^{-1} 
\end{equation}
and for $t>0$
\begin{equation} \label{equ:twentysix}
  \int_{\SC_{ex}} \nu_t d\NN_0(\nu) = 1.
\end{equation}
For $t>0$ let $P_t^*$ denote the probability on $\SC_{ex}$ defined by
\begin{equation} \label{equ:twentyseven}
  P_t^*[A] = \frac{ \NN_0[ A \cap \{ \nu_t > 0 \} ] }{ \NN_0[ \nu_t > 0 ] }.
\end{equation}
%
% ------------------------------------------------------------
%
\begin{lem} \label{equ:lemma_8}
For all $h>0$
\begin{equation} \label{equ:twentyeight}
  P_h^0[ \zeta > t ] = P_h^0[ X_t > 0 ] = 1 - e^{-h/(t \gamma)} \leq \frac{h}{t \gamma}.
\end{equation}
\end{lem}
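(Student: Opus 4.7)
The plan is to exploit the Poisson cluster decomposition (\ref{equ:twentyfour}) of the Feller branching process that was just set up, since this makes the Laplace-transform-style computation completely transparent.

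First I would note that for the generator $\SL^0 f(x) = \gamma x f''(x)$ with no immigration, the state $0$ is absorbing: once $X$ hits $0$ it stays there (this can be seen either from the generator vanishing at $0$ or from the cluster representation below, as each excursion $\nu \in \SC_{ex}$ stays at $0$ after time $\zeta(\nu)$). Consequently $\{\zeta > t\} = \{X_t > 0\}$ a.s., which gives the first equality.

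For the second equality I would use the representation $X = \int_{\SC_{ex}} \nu\, \Xi^h(d\nu)$ with $\Xi^h$ a Poisson point process of intensity $h\NN_0$. Since each excursion $\nu$ is nonnegative, $X_t > 0$ holds if and only if $\Xi^h$ charges the set $A_t = \{\nu \in \SC_{ex} : \nu_t > 0\}$, i.e.\ $\Xi^h(A_t) \geq 1$. By the Poisson property,
\[
  P_h^0[X_t = 0] = P[\Xi^h(A_t) = 0] = \exp\!\bigl(-h\,\NN_0[\nu_t > 0]\bigr) = \exp\!\bigl(-h/(t\gamma)\bigr),
\]
where in the last step I invoke (\ref{equ:twentyfive}). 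This yields $P_h^0[X_t > 0] = 1 - e^{-h/(t\gamma)}$.

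The final inequality $1 - e^{-h/(t\gamma)} \leq h/(t\gamma)$ is the elementary bound $1-e^{-u} \leq u$ for $u \geq 0$. There is no real obstacle here; the only subtlety worth flagging is that the equivalence $\{\zeta > t\} = \{X_t > 0\}$ relies on $0$ being absorbing, which is why the statement is specific to the no-immigration case $P_h^0$. If one preferred to bypass $\NN_0$ altogether, an alternative route is to use the classical Laplace transform $E_h^0[e^{-\lambda X_t}] = \exp(-h\lambda/(1+\gamma\lambda t))$ and let $\lambda \to \infty$, but the cluster-based argument above fits the framework of this subsection more cleanly.
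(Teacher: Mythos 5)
Your argument is mathematically correct, and the observations are all sound: absorption at $0$ gives $\{\zeta>t\}=\{X_t>0\}$; since every excursion $\nu\in\SC_{ex}$ is nonnegative, $X_t=\int\nu_t\,\Xi^h(d\nu)>0$ exactly when $\Xi^h$ has at least one atom in $A_t=\{\nu_t>0\}$; $\Xi^h(A_t)$ is Poisson with mean $h\,\NN_0[\nu_t>0]=h/(\gamma t)$; and $1-e^{-u}\le u$ finishes. However, note that this paper does not actually supply a proof of Lemma~\ref{equ:lemma_8} — the subsection explicitly states that Lemma~8, Proposition~9 and Lemma~10 are cited from \cite{r6}, so there is no ``paper's own proof'' here to compare against.

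The one substantive caveat worth flagging is logical order rather than correctness. In the standard development (and in \cite{r6}), the canonical measure $\NN_0$ is built as the $h\downarrow 0$ limit of $h^{-1}P_h^0$, and the identity $\NN_0[\nu_\delta>0]=(\gamma\delta)^{-1}$ of~(\ref{equ:twentyfive}) is \emph{derived from} the explicit survival probability $P_h^0[X_\delta>0]=1-e^{-h/(\gamma\delta)}$, i.e.\ from the very content of Lemma~\ref{equ:lemma_8}. So while your cluster-based derivation is internally consistent with the way this paper presents (\ref{equ:twentyfour})--(\ref{equ:twentyfive}) as given facts, it would be circular if one tried to trace it back to first principles. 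The Laplace-transform route you sketch at the end — $E_h^0[e^{-\lambda X_t}]=\exp\bigl(-h\lambda/(1+\gamma\lambda t)\bigr)$, then $\lambda\to\infty$ — is the self-contained argument and is what the cited reference effectively uses; it would have been the safer choice to present as primary, with the cluster computation as a consistency check.
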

%
% ------------------------------------------------------------
%
\begin{pro} \label{equ:prop_9}
Let $f: \SC(\RR_+,\RR_+) \rightarrow \RR$ be bounded and continuous. Then for any $\delta > 0$,
\[ \lim \limits_{ h \downarrow 0 } h^{-1} E_h^0 [ f(X) \1_{\{X_{\delta} > 0\}} ] = \int_{\SC_{ex}} f(\nu) \1_{\{\nu_{\delta} > 0\}} d\NN_0(\nu). \]
\end{pro}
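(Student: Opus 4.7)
The plan is to use the Poisson point process representation (\ref{equ:twentyfour}) of $X$ under $P_h^0$ and isolate the single excursion contribution responsible for the survival at time $\delta$.

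First, I would write $X = \int_{\SC_{ex}} \nu\, \Xi^h(d\nu)$ where $\Xi^h$ is Poisson with intensity $h\NN_0$, and decompose $\Xi^h = \Xi^{h,1} + \Xi^{h,2}$ via the thinning theorem, where $\Xi^{h,1}$ consists of the atoms $\nu$ with $\nu_\delta > 0$ and $\Xi^{h,2}$ of those with $\nu_\delta = 0$. These are independent Poisson processes. By (\ref{equ:twentyfive}), the intensity measure of $\Xi^{h,1}$ has total mass $h\NN_0[\nu_\delta > 0] = h/(\gamma\delta)$. In particular $\{X_\delta > 0\} = \{|\Xi^{h,1}| \geq 1\}$, and
\[
P(|\Xi^{h,1}|=1) = h\NN_0[\nu_\delta>0]\,e^{-h\NN_0[\nu_\delta>0]}, \qquad P(|\Xi^{h,1}|\geq 2) = O(h^2).
\]
Thus, up to an error of order $h$ after dividing by $h$, only the event $\{|\Xi^{h,1}|=1\}$ matters; the contribution from $\{|\Xi^{h,1}|\geq 2\}$ is $O(h)\norm{f}$.

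Next, conditional on $\{|\Xi^{h,1}|=1\}$, the unique atom $\nu^\ast$ has law $P_\delta^\ast$ by (\ref{equ:twentyseven}) and is independent of $\Xi^{h,2}$; writing $X^{h,2} := \int \nu\, \Xi^{h,2}(d\nu)$, we have $X = \nu^\ast + X^{h,2}$ on this event. Then
\[
h^{-1}E_h^0[f(X)\1_{\{|\Xi^{h,1}|=1\}}]
= \NN_0[\nu_\delta>0]\,e^{-h\NN_0[\nu_\delta>0]}\,E\!\left[f(\nu^\ast + X^{h,2})\right].
\]
As $h\downarrow 0$, the prefactor converges to $\NN_0[\nu_\delta>0]$. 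It remains to show $E[f(\nu^\ast + X^{h,2})] \to E[f(\nu^\ast)]$, which combined with the definition (\ref{equ:twentyseven}) gives the right-hand side.

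For the convergence, I would use (\ref{equ:twentysix}) to bound $E[X^{h,2}_s] = h\int_{\{\nu_\delta=0\}}\nu_s\,d\NN_0(\nu) \leq h$ for each $s\geq 0$; since $X^{h,2}_s \equiv 0$ for $s\geq\delta$, this yields $X^{h,2}\to 0$ in probability uniformly on compact time intervals. By continuity of $f$ on $\SC(\RR_+,\RR_+)$ (with the topology of uniform convergence on compacts), $f(\nu^\ast + X^{h,2})\to f(\nu^\ast)$ in probability, and boundedness of $f$ gives $L^1$-convergence. The main delicate point will be justifying this last convergence rigorously — i.e.\ ensuring that the perturbation $X^{h,2}$ is negligible in the topology with respect to which $f$ is continuous — since individual excursions in $\Xi^{h,2}$ can be large in sup-norm before dying; the expectation bound above, together with Markov's inequality, is what makes it work.
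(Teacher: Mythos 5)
Your Poisson-thinning decomposition $\Xi^h = \Xi^{h,1}+\Xi^{h,2}$, the observation that the event $\{|\Xi^{h,1}|\geq 2\}$ contributes only $O(h)\norm{f}$ after dividing by $h$, the identification of the conditional law of the single surviving atom as $P_\delta^*$ via (\ref{equ:twentyseven}), and the exploitation of the independence of the thinned processes are all correct, and this is the natural argument from (\ref{equ:twentyfour})--(\ref{equ:twentyseven}). (Note the paper gives no proof of this proposition; it simply defers to \cite{r6}.)

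There is, however, a gap in the final step, which you flag yourself but do not actually close. To invoke continuity of $f$ on $\SC(\RR_+,\RR_+)$ with the locally uniform topology you must show $\sup_{s<\delta}X^{h,2}_s\to 0$ in probability (the sup over $s\geq\delta$ is trivially $0$). The pointwise bound $E[X^{h,2}_s]\leq h$ together with Markov's inequality controls $X^{h,2}_s$ only at each fixed $s$; a family of fixed-time expectation bounds does not control the pathwise supremum, and the obstruction you name -- individual excursions in $\Xi^{h,2}$ being large in sup-norm before dying -- is exactly what this per-time-point bound fails to address. What does close the gap: under the thinning coupling $X=X^{h,1}+X^{h,2}$ with both summands nonnegative, so $X^{h,2}\leq X$ pathwise; and $X$ under $P_h^0$ is a nonnegative local martingale (generator $\gamma x f''$, no drift), hence a supermartingale started at $h$, so the maximal inequality gives $P(\sup_{s} X_s>\lambda)\leq h/\lambda$, whence $P(\sup_{s} X^{h,2}_s>\lambda)\leq h/\lambda\to 0$. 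Since $\Xi^{h,1}$ and $\Xi^{h,2}$ are independent, this unconditional bound on the marginal law of $X^{h,2}$ persists after conditioning on $\{|\Xi^{h,1}|=1\}$. With this maximal bound in hand, the remainder of your argument (continuity and boundedness of $f$, bounded convergence) is fine.
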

%
% ------------------------------------------------------------

%
The representation (\ref{equ:twentyfour}) leads to the following decompositions of the processes $x_t^{(i)}, i \in N_{C2}$ that will be used below. Recall that $x_t^{(i)}$ is the Feller branching immigration process with coefficients $b_i^0 \geq 0, \gamma_i^0 > 0$ starting at $x_i$ and with law $P_{x_i}^i$. In particular, we can make use of the additive property of Feller branching processes.
%
% ------------------------------------------------------------
%
\begin{lem} \label{equ:lemma_10}
Let $0 \leq \rho \leq 1$. 

(a) We may assume
\[ x^{(i)} = X_0' + X_1, \]
where $X_0'$ is a diffusion with generator $\SA_0' f(x) = \gamma_i^0 x f''(x) + b_i^0 f'(x)$ starting at $\rho x_i, X_1$ is a diffusion with generator $\gamma_i^0 x f''(x)$ starting at $(1-\rho) x_i \geq 0$, and $X_0', X_1$ are independent. In addition, we may assume
\begin{equation} \label{equ:thirty}
  X_1(t) = \int_{\SC_{ex}} \nu_t \Xi(d\nu) = \sum \limits_{j=1}^{N_t} e_j(t),
\end{equation}
where $\Xi$ is a Poisson point process on $\SC_{ex}$ with intensity $(1-\rho) x_i \NN_0$, $\{ e_j, j \in \NN \}$ is an iid sequence with common law $P_t^*$, and $N_t$ is a Poisson random variable (independent of the $\{e_j\}$) with mean $\frac{(1-\rho) x_i}{t \gamma_i^0}$. 

(b) We also have
\begin{align*}
  \int_0^t X_1(s) ds &= \int_{\SC_{ex}} \int_0^t \nu_s ds \1_{\{\nu_t \neq 0\}} \Xi(d\nu) + \int_{\SC_{ex}} \int_0^t \nu_s ds \1_{\{\nu_t = 0\}} \Xi(d\nu) \\
  &\equiv \sum \limits_{j=1}^{N_t} r_j(t) + I_1(t)
\end{align*}
and
\begin{equation} \label{equ:thirtytwo}
  \int_0^t x_s^{(i)} ds = \sum \limits_{j=1}^{N_t} r_j(t) + I_2(t),
\end{equation}
where $r_j(t) = \int_0^t e_j(s) ds, I_2(t) = I_1(t) + \int_0^t X_0'(s) ds$. 

(c) Let $\Xi^h$ be a Poisson point process on $\SC_{ex}$ with intensity $h_i \NN_0 \ (h_i>0)$, independent of the above processes. Set $\Xi^{x+h} = \Xi + \Xi^h$ and $X_t^h = \int \nu_t \Xi^h(d\nu)$. Then
\begin{equation} \label{equ:thirtythree}
  X_t^{x+h} \equiv x_t^{(i)} + X^h(t) = \int_{\SC_{ex}} \nu_t \Xi^{x+h}(d\nu) + X_0'(t)
\end{equation}
is a diffusion with generator $\SA_0'$ starting at $x_i + h_i$. In addition
\begin{equation} \label{equ:thirtyfour}
  \int_{\SC_{ex}} \nu_t \Xi^{x+h}(d\nu) = \sum \limits_{j=1}^{N_t'} e_j(t),
\end{equation}
where $N_t'$ is a Poisson random variable with mean $((1-\rho)x_i + h_i)(\gamma_i^0 t)^{-1}$, such that $\{e_j\}$ and $(N_t,N_t')$ are independent. 

Also
\begin{equation} \label{equ:thirtyfive}
  \int_0^t X_s^{x+h} ds = \sum \limits_{j=1}^{N_t'} r_j(t) + I_2(t) + I_3^h(t),
\end{equation}
where $I_3^h(t) = \int_{\SC_{ex}} \int_0^t \nu_s ds \1_{\{\nu_t = 0\}} \Xi^h(d\nu).$
\end{lem}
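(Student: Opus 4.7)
The plan is to assemble everything from two ingredients: the additive property of Feller branching (with immigration) processes, and the canonical Poisson point process representation (\ref{equ:twentyfour}). Recall the additive property in the form we need: if $X'$ has generator $\gamma x f''(x)+b f'(x)$ starting at $a\ge0$ and $X''$ has generator $\gamma x f''(x)$ starting at $c\ge 0$, with $X',X''$ independent, then $X'+X''$ has generator $\gamma x f''(x)+b f'(x)$ starting at $a+c$. This reduces everything below to bookkeeping on Poisson point processes.

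For part (a), split $x_i=\rho x_i+(1-\rho)x_i$ and let $X_0'$ carry all the immigration (generator $\gamma_i^0 x f''(x)+b_i^0 f'(x)$, starting at $\rho x_i$) while $X_1$ is a driftless Feller branching process (generator $\gamma_i^0 x f''(x)$, starting at $(1-\rho)x_i$), independent of $X_0'$. The additive property identifies $X_0'+X_1$ in law with $x^{(i)}\sim P_{x_i}^i$. Applying (\ref{equ:twentyfour}) to $X_1$ with $h=(1-\rho)x_i$ realises $X_1(t)=\int_{\SC_{ex}}\nu_t\,\Xi(d\nu)$ for a PPP $\Xi$ of intensity $(1-\rho)x_i\NN_0$. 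Split $\Xi$ by restriction to $\{\nu_t>0\}$ and $\{\nu_t=0\}$: atoms with $\nu_t=0$ contribute nothing to $X_1(t)$, while by (\ref{equ:twentyfive}) the number of atoms with $\nu_t>0$ is $N_t\sim\mathrm{Poisson}((1-\rho)x_i(\gamma_i^0 t)^{-1})$, and by the definition (\ref{equ:twentyseven}) of $P_t^*$ the surviving trajectories, enumerated as $e_1,\ldots,e_{N_t}$, are conditionally iid with law $P_t^*$ independently of $N_t$. This yields (\ref{equ:thirty}).

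For part (b), apply Fubini to $\int_0^t X_1(s)\,ds=\int_{\SC_{ex}}\int_0^t\nu_s\,ds\,\Xi(d\nu)$ and split the same PPP according to whether $\nu_t>0$ or $\nu_t=0$. The surviving part yields $\sum_{j=1}^{N_t} r_j(t)$ with $r_j(t)=\int_0^t e_j(s)\,ds$; the rest is by definition $I_1(t)$. Adding $\int_0^t X_0'(s)\,ds$ to $I_1(t)$ to form $I_2(t)$ and using $x_s^{(i)}=X_0'(s)+X_1(s)$ gives (\ref{equ:thirtytwo}).

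For part (c), the independence of $\Xi^h$ from the processes built in (a) lets us use the superposition property: $\Xi^{x+h}=\Xi+\Xi^h$ is a PPP on $\SC_{ex}$ with intensity $((1-\rho)x_i+h_i)\NN_0$, so by (\ref{equ:twentyfour}) the integral $\int\nu_t\,\Xi^{x+h}(d\nu)$ has law $P_{(1-\rho)x_i+h_i}^0$. Adding $X_0'$ and using the additive property in reverse, $X_0'+\int\nu_t\,\Xi^{x+h}(d\nu)$ is a diffusion with generator $\SA_0'$ starting at $\rho x_i+(1-\rho)x_i+h_i=x_i+h_i$; on the other hand, by construction $X_0'+\int\nu_t\,\Xi(d\nu)+X^h(t)=x^{(i)}_t+X^h(t)$, giving (\ref{equ:thirtythree}). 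The representation (\ref{equ:thirtyfour}) follows by applying the splitting of (a) to the combined PPP $\Xi^{x+h}$, which produces $N_t'\sim\mathrm{Poisson}(((1-\rho)x_i+h_i)(\gamma_i^0 t)^{-1})$ surviving atoms enumerated by the same iid $P_t^*$-sequence $\{e_j\}$; independence of $(N_t,N_t')$ from $\{e_j\}$ is inherited from the independence of $\Xi$ and $\Xi^h$ from the ordering. Finally, integrate (\ref{equ:thirtythree}) in time and combine (\ref{equ:thirtytwo}) with the analogous decomposition of $\int_0^t X^h(s)\,ds$ to obtain (\ref{equ:thirtyfive}), where the non-surviving contribution from $\Xi^h$ is precisely $I_3^h(t)$. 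The main obstacle is a notational one, namely pinning down a single probability space on which all couplings coexist so that the enumerations $\{e_j\}$, $N_t$, $N_t'$ refer to the same underlying atoms; once the superposition and restriction theorems for Poisson point processes are invoked, the rest is bookkeeping.
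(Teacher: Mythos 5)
The paper gives no proof of Lemma~\ref{equ:lemma_10}: it is quoted from \cite{r6} together with Lemma~\ref{equ:lemma_8} and Proposition~\ref{equ:prop_9}, with proofs explicitly deferred to that reference, so there is no in-paper argument to compare against. Your reconstruction --- the additive property of Feller branching processes, the canonical representation (\ref{equ:twentyfour}), superposition and restriction of Poisson point processes, and the identification via (\ref{equ:twentyfive}) and (\ref{equ:twentyseven}) of the surviving atoms as an iid $P_t^*$-sequence of independent Poisson size --- is correct and is the standard argument; the coupling point you flag in part~(c) is resolved exactly as you indicate, by enumerating the surviving atoms of $\Xi$ as $e_1,\dots,e_{N_t}$, then those of $\Xi^h$ as $e_{N_t+1},\dots,e_{N_t'}$, and appending fresh iid $P_t^*$ variables thereafter, so that the conditional law of $\{e_j\}$ given $(N_t,N_t')$ is iid $P_t^*$ and hence $\{e_j\}$ is unconditionally iid $P_t^*$ and independent of $(N_t,N_t')$, which is exactly the joint statement of the lemma.
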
 
%
% ============================================================
%
\subsection{Existence and representation of derivatives of the semigroup} \label{equ:section_2_2}
%
% ============================================================

Let $\SA^0$ and $P_t$ be as in Subsection \ref{equ:section_2_1}. The first and second partial derivatives of $P_t f$ w.r.t. $x_k, x_l, k, l \in N_{C2}$ will be represented in terms of the canonical measure $\NN_0$.

Recall that by (\ref{equ:twenty})
\[ P_t f (x) = E^{N_{C2}} \!\left[ G \!\left( I_t^{N_R} , x_t^{N_{C2}} \right) \right], \]
where $I_t^{N_R} = \left\{ I_t^{(j)} \right\}_{j \in N_R}$ with $I_t^{(j)} = \int_0^t \sum \limits_{i \in C_j} x_s^{(i)} ds.$ 
%
% ------------------------------------------------------------
%
\begin{nota}
If $X \in \SC\!\left( \RR_+,\RR_+^{N_{C2}} \right)$, $\eta, \eta', \theta, \theta' \in \SC_{ex}$ (for the definition of $\SC_{ex}$ see (\ref{equ:def_C_ex})) and $k, l \in N_{C2}$, let
\begin{align*}
  & G_{t,x^{N_R}}^{+k} \biggl( X;\int_0^t \eta_s ds,\theta_t \biggr) \\
  & \equiv G_{t,x^{N_R}} \Biggl( \biggl\{ \int_0^t \sum \limits_{i \in C_j} X_s^i ds + \1_{\{k \in C_j\}} \int_0^t \eta_s ds \biggr\}_{j \in N_R} , \biggl\{ X_t^i + \1_{\{i=k\}} \theta_t \biggr\}_{i \in N_{C2}} \Biggr)
\end{align*}
and
\begin{align*}
  & G_{t,x^{N_R}}^{+k,+l} \biggl( X;\int_0^t \eta_s ds,\theta_t,\int_0^t \eta_s' ds,\theta_t' \biggr) \\
  & \equiv G_{t,x^{N_R}} \Biggl( \biggl\{ \int_0^t \sum \limits_{i \in C_j} X_s^i + \1_{\{k \in C_j\}} \eta_s + \1_{\{l \in C_j\}} \eta_s' ds \biggr\}_{j \in N_R}, \\
  & \qquad \qquad \quad \biggl\{ X_t^i + \1_{\{i=k\}} \theta_t + \1_{\{i=l\}} \theta_t' \biggr\}_{i \in N_{C2}} \Biggr).
\end{align*}
Note that if $k \in N_2$ in the above we have $\1_{\{k \in C_j\}} = 0$ for $j \in N_R$, i.e. 
\begin{align}
  G_{t,x^{N_R}}^{+k} \!\left( X;\int_0^t \! \eta_s ds,\theta_t \right) &= G_{t,x^{N_R}}^{+k} \Bigl( X;0,\theta_t \Bigr), \nonumber \\
  G_{t,x^{N_R}}^{+k,+l} \!\left( X;\int_0^t \! \eta_s ds,\theta_t,\int_0^t \! \eta_s' ds,\theta_t' \right) &= G_{t,x^{N_R}}^{+k,+l} \!\left( X;0,\theta_t,\int_0^t \! \eta_s' ds,\theta_t' \right) \label{equ:fourtythree}
\end{align}
and for $l \in N_2$
\begin{equation}
  G_{t,x^{N_R}}^{+k,+l} \!\left( X;\int_0^t \eta_s ds,\theta_t,\int_0^t \eta_s' ds,\theta_t' \right) = G_{t,x^{N_R}}^{+k,+l} \!\left( X;\int_0^t \eta_s ds,\theta_t,0,\theta_t' \right). \label{equ:fourtyfour} 
\end{equation}
If $X \in \SC\!\left( \RR_+,\RR_+^{N_{C2}} \right)$, $\nu, \nu' \in \SC_{ex}$ and $k, l \in N_{C2}$, let
\[ \Delta G^{+k}_{t,x^{N_R}} (X,\nu) \equiv G_{t,x^{N_R}}^{+k} \!\left( X;\int_0^t \nu_s ds,\nu_t \right) - G_{t,x^{N_R}}^{+k} \Bigl( X;0,0 \Bigr) \]
and
\begin{align}
  & \Delta G^{+k,+l}_{t,x^{N_R}} \!\left( X,\nu,\nu' \right) \label{equ:fourtyfive} \\
  & \equiv G_{t,x^{N_R}}^{+k,+l} \!\left( X;\int_0^t \nu_s ds,\nu_t,\int_0^t \nu_s' ds,\nu_t' \right) - G_{t,x^{N_R}}^{+k,+l} \!\left( X;0,0,\int_0^t \nu_s' ds,\nu_t' \right) \nonumber \\
  & \qquad - G_{t,x^{N_R}}^{+k,+l} \!\left( X;\int_0^t \nu_s ds,\nu_t,0,0 \right) + G_{t,x^{N_R}}^{+k,+l} \Bigl( X;0,0,0,0 \Bigr). \nonumber
\end{align}
\end{nota}
%
% ------------------------------------------------------------
%
\begin{pro} \label{equ:proposition_14}
If $f$ is a bounded Borel function on $\SSS_0$ and $t>0$ then $P_t f \in \SC_b^2(\SSS_0)$ and for $k, l \in V=\{ 1,\ldots,d \}$ 
\[ \norm{(P_t f)_{kl}} \leq c \frac{\norm{f}}{t^2}. \]
Moreover if $f$ is bounded and continuous on $\SSS_0$, then for all $k, l \in N_{C2}$
\begin{align}
  (P_t f)_k (x) &= E^{N_{C2}} \!\left[ \int \Delta G^{+k}_{t,x^{N_R}} \!\left( x^{N_{C2}},\nu \right) d\NN_0(\nu) \right], \label{equ:fourtyseven} \\
  (P_t f)_{kl} (x) &= E^{N_{C2}} \!\left[ \int \int \Delta G^{+k,+l}_{t,x^{N_R}} \!\left( x^{N_{C2}},\nu,\nu' \right) d\NN_0(\nu) d\NN_0(\nu') \right]. \label{equ:fourtyseven_b}
\end{align}
\end{pro}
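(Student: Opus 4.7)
The plan is as follows. Fix $k \in N_{C2}$ and $h > 0$. By Lemma \ref{equ:lemma_10}(c) the diffusion started at $x + he_k$ admits the coupling in which the $k$th coordinate is $x^{(k)} + X^h$ with $X^h$ an independent Feller branching from $h$ of generator $\gamma_k^0 z \partial_z^2$ (all other coordinates unchanged), and $X^h(t) = \int \nu_t\, \Xi^h(d\nu)$ for a Poisson point process $\Xi^h$ of intensity $h\, \NN_0$. Because shifting $x^{(k)}$ by $X^h$ increments $I_t^{(j)}$ by $\int_0^t X^h_s\, ds$ for every $j$ with $k \in C_j$ (i.e.\ $j \in \bar R_k$), as well as $x_t^{(k)}$ by $X^h_t$, one obtains
\[ P_tf(x + he_k) - P_tf(x) = E^{N_{C2}}\Bigl[ G^{+k}_{t,x^{N_R}}\bigl(x^{N_{C2}}; \textstyle\int_0^t X^h_s\, ds,\, X^h_t\bigr) - G^{+k}_{t,x^{N_R}}(x^{N_{C2}}; 0, 0) \Bigr]. \]
This is where the multi-reactant structure of the general network enters: a single perturbation in $x_k$ now feeds into several coordinates of $G$ simultaneously, which is exactly what the notation $G^{+k}$ is designed to encode.

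Next I would fix $\delta \in (0,t)$ and split on $\{X^h_\delta = 0\}$ versus $\{X^h_\delta > 0\}$. Using Lemma \ref{equ:lemma_11} together with Lemma \ref{equ:lemma_7}(b), the first event's contribution to $h^{-1}$ times the above is bounded by $\psi(\delta) + o_h(1)$ with $\psi(\delta) \downarrow 0$ as $\delta \downarrow 0$; Proposition \ref{equ:prop_9}, applied to the bounded continuous functional $\nu \mapsto \Delta G^{+k}_{t,x^{N_R}}(x^{N_{C2}}, \nu)\, \1_{\{\nu_\delta > 0\}}$, converts $h^{-1}$ times the second-event contribution into $\int \Delta G^{+k}_{t,x^{N_R}}(x^{N_{C2}},\nu)\, \1_{\{\nu_\delta > 0\}}\, d\NN_0(\nu)$ in the limit $h \downarrow 0$. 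Dominated convergence as $\delta \downarrow 0$, with a $\NN_0$-integrable majorant built from Lemma \ref{equ:lemma_13} applied to the first difference across the coordinate set $\{k\} \cup \bar R_k$ combined with the derivative estimates of Lemma \ref{equ:lemma_11} and the identities $\NN_0[\nu_t > 0] = (\gamma_k^0 t)^{-1}$, $\int \nu_t\, d\NN_0 = 1$, yields (\ref{equ:fourtyseven}).

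For (\ref{equ:fourtyseven_b}) I would iterate: differentiate (\ref{equ:fourtyseven}) in $x_l$ by perturbing with an additional independent Poisson point process $\Xi^{h'}$ of intensity $h' \NN_0$ and repeat the argument above, with the four-term combination in $\Delta G^{+k,+l}$ being precisely the second-order finite difference produced by two successive single differences in $x_k$ and $x_l$. The $L^\infty$ bound $\|(P_tf)_{kl}\|_\infty \leq c\|f\|/t^2$ then follows from the representation by applying the second part of Lemma \ref{equ:lemma_13} with index sets $\{k\} \cup \bar R_k$ and $\{l\} \cup \bar R_l$, bounding the resulting mixed partials of $G$ by Lemma \ref{equ:lemma_11}, and integrating with Lemma \ref{equ:lemma_7}(b) and moments of $\NN_0$. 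For $k$ or $l$ in $N_R$ derivatives are pulled directly inside $E^{N_{C2}}$ via Lemma \ref{equ:lemma_11}(a) with $k = 2$, and the mixed $N_R$/$N_{C2}$ case combines both methods. Upgrading the $\SC_b^2$ claim from continuous to bounded Borel $f$ is standard bounded-pointwise approximation using the uniform $L^\infty$ bounds already obtained.

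The main obstacle is the integrability of $\Delta G^{+k,+l}$ against $\NN_0 \otimes \NN_0$ near $(\nu,\nu') \to (0,0)$, where each mass $\NN_0[\nu_\delta > 0] = (\gamma \delta)^{-1}$ diverges: this must be beaten by the cancellation present in the doubly-differenced integrand. In \cite{r6} each catalyst was tied to a single reactant, so the finite difference lived in one coordinate at a time; here the second difference must be run simultaneously across two possibly overlapping index sets $\{k\} \cup \bar R_k$ and $\{l\} \cup \bar R_l$, the overlap being exactly $\bar R_k \cap \bar R_l$. Lemma \ref{equ:lemma_13} is precisely what replaces the product-form decomposition available in \cite{r6}, expanding such multi-coordinate second (and third) differences into sums of mixed partials that Lemma \ref{equ:lemma_11} can control coordinate by coordinate.
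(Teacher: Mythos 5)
Your overall strategy matches the paper's: couple via Lemma \ref{equ:lemma_10}(c) and the excursion decomposition, split on whether the added Poisson cluster survives to time $\delta$, apply Proposition \ref{equ:prop_9} on the surviving piece, control the dying piece via Lemma \ref{equ:lemma_13}/Lemma \ref{equ:lemma_11} and Lemma \ref{equ:lemma_7}(b), and then iterate for the second derivative. The distinction that $\bar R_k = R_k \cap N_R$ replaces the single-reactant set of \cite{r6}, so that a single $x_k$ perturbation now feeds into the whole block $\{k\}\cup\bar R_k$ and Lemma \ref{equ:lemma_13} replaces the product-form factorisation, is the correct diagnosis of what is new.

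However, there is a genuine gap. What the excursion/Poisson argument produces for $k,l\in N_{C2}$ is only the right-hand derivative $\frac{\partial^+}{\partial x_l^+}(P_tf)_k(x)$ (you increment with $h, h'>0$; there is no mass-removal coupling), together with an integral formula for it. To identify this with the two-sided derivative $(P_tf)_{kl}$ \emph{and} to conclude $P_tf\in\SC_b^2(\SSS_0)$, one must show that the right-hand side of the candidate formula is jointly continuous in $x$. Your proposal never addresses this. In the paper this is a substantial block of the proof: (i) continuity in $x^{N_R}$ uniformly in $x^{N_{C2}}$ is obtained by comparing $\Delta G^{+k,+l}_{t,x^{N_R}}$ and $\Delta G^{+k,+l}_{t,\bar x^{N_R}}$ via the $\1_{\{\nu_\delta>0,\nu'_\delta>0\}}$/complement split, yielding a bound of order $t^{-3}\delta + \delta^{-2}t^{-1}|x^{N_R}-\bar x^{N_R}|$; and (ii) continuity in $x^{N_{C2}}$ is obtained via a monotone coupling $x^n\uparrow x^{(m)}$ of the Feller processes together with dominated convergence against the majorant $\bar g_{t,\delta}$. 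Only after that does the elementary fact (continuous one-sided derivative $\Rightarrow$ differentiable) give (\ref{equ:fourtyseven_b}) and membership in $\SC_b^2$. Without this step your plan establishes a right-hand derivative and its representation, but not the stated conclusion.

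Two smaller points. First, the reduction from bounded Borel to bounded continuous $f$ is not well served by ``bounded-pointwise approximation'': showing $(P_tf_n)_{kl}\to(P_tf)_{kl}$ from a uniform second-derivative bound alone requires an extra compactness argument. The paper's route $P_tf = P_{t/2}(P_{t/2}f)$, combined with Lemma \ref{equ:lemma_12} to show $P_{t/2}f\in\SC_b$, is cleaner and avoids this. Second, for the $L^\infty$ bound on $(P_tf)_{kl}$ it is the \emph{first} (two-index) part of Lemma \ref{equ:lemma_13} that applies in the case $\nu_t=\nu'_t=0$, not the second (three-index) part; the cases where one of $\nu_t,\nu'_t$ is positive require the separate one-sided bounds of type (\ref{equ:fifty}) rather than Lemma \ref{equ:lemma_13}, so the statement ``the $L^\infty$ bound follows from the second part of Lemma \ref{equ:lemma_13}'' should be corrected.
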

%
% ------------------------------------------------------------
%
\begin{proof}
The outline of this proof is similar to the one for \cite{r6}, Proposition 14. We shall therefore only mention some changes due to the consideration of more than one catalyst at a time.  

With the help of Lemma \ref{equ:lemma_12} and using that $P_t f = P_{t/2} (P_{t/2} f)$ one can easily show that it suffices to consider bounded continuous $f$. In \cite{r6}, Proposition 14 one only proves the existence of $(P_t f)_{kl} (x)$, $k, l \in N_{C2}$ and its representation in terms of the canonical measure as in (\ref{equ:fourtyseven_b}) based on (\ref{equ:fourtyseven}). From the methods used it should then be clear how the easier formula (\ref{equ:fourtyseven}) may have been found.

Hence, let us also assume $(P_t f)_k$ exists and is given by (\ref{equ:fourtyseven}) for $k \in N_{C2}$. Let $0 < \delta \leq t$. In the first case where $\nu_{\delta}' = \nu_t = 0$, use Lemmas \ref{equ:lemma_13} and \ref{equ:lemma_11}(b) to see that for $k, l \in N_C$
\begin{align}
  & \left| \Delta G^{+k,+l}_{t,x^{N_R}} \!\left( x^{N_{C2}},\nu,\nu' \right) \right| \label{equ:fourtynine} \\
  & = \left| G_{t,x^{N_R}}^{+k,+l} \!\left( x^{N_{C2}};\int_0^t \nu_s ds,0,\int_0^\delta \nu_s' ds,0 \right) - G_{t,x^{N_R}}^{+k,+l} \!\left( x^{N_{C2}};0,0,\int_0^\delta \nu_s' ds,0 \right) \right. \nonumber \\
  & \qquad \left. - G_{t,x^{N_R}}^{+k,+l} \!\left( x^{N_{C2}};\int_0^t \nu_s ds,0,0,0 \right) + G_{t,x^{N_R}}^{+k,+l} \Bigl( x^{N_{C2}};0,0,0,0 \Bigr) \right| \nonumber \\
  & \; = \, \biggl| G_{t,x^{N_R}} \biggl( \biggl\{ \int_0^t \! \sum \limits_{i \in C_j} x_s^{(i)} ds + \1_{\{k \in C_j\}} \int_0^t \! \nu_s ds + \1_{\{l \in C_j\}} \int_0^{\delta} \! \nu_s' ds \biggr\}_{j \in N_R} \!\! , x_t^{N_{C2}} \biggr) \nonumber \\
  & \qquad - G_{t,x^{N_R}} \biggl( \biggl\{ \int_0^t \sum \limits_{i \in C_j} x_s^{(i)} ds + \1_{\{l \in C_j\}} \int_0^{\delta} \nu_s' ds \biggr\}_{j \in N_R} , x_t^{N_{C2}} \biggr) \nonumber \\
  & \qquad - G_{t,x^{N_R}} \biggl( \biggl\{ \int_0^t \sum \limits_{i \in C_j} x_s^{(i)} ds + \1_{\{k \in C_j\}} \int_0^t \nu_s ds \biggr\}_{j \in N_R} , x_t^{N_{C2}} \biggr) \nonumber \\
  & \qquad + G_{t,x^{N_R}} \biggl( \biggl\{ \int_0^t \sum \limits_{i \in C_j} x_s^{(i)} ds \biggr\}_{j \in N_R} ,x_t^{N_{C2}} \biggr) \biggr| \nonumber \\
  & \leq \sum \limits_{j_1: j_1 \in \bar{R}_k} \sum \limits_{j_2: j_2 \in \bar{R}_l} c \norm{f} \left( I_t^{(j_1)} \right)^{-1} \left( I_t^{(j_2)} \right)^{-1} \int_0^{\delta} \nu_s' ds \int_0^t \nu_s ds \nonumber
\end{align}
(compare to (49) in \cite{r6}).

For $k$ or $l \in N_2$ we obtain via (\ref{equ:fourtythree}) and (\ref{equ:fourtyfour})
\[ \left| \Delta G^{+k,+l}_{t,x^{N_R}} \!\left( x^{N_{C2}},\nu,\nu' \right) \right| = 0. \]
This is consistent with (\ref{equ:fourtynine}) if we consider the sum over an empty set to be zero (recall that $\bar{R}_k = R_k \cap N_R$ and thus $\bar{R}_k = \emptyset$ if $k \in N_2$). Hence (\ref{equ:fourtynine}) is a bound for all $k, l \in N_{C2}$. 
%
% additional part starts

If $\nu_{\delta}'=0$ and $\nu_t>0$ then by Lemma \ref{equ:lemma_11}(b) and triangle inequality we obtain for $l \in N_C$
\begin{align}
  & \left| \Delta G^{+k,+l}_{t,x^{N_R}} \left( x^{N_{C2}},\nu,\nu' \right) \right| \label{equ:fifty} \\
  & \; \leq \, \left| G_{t,x^{N_R}}^{+k,+l} \left( x^{N_{C2}};\int_0^t \nu_s ds,\nu_t,\int_0^\delta \nu_s' ds,0 \right) - G_{t,x^{N_R}}^{+k,+l} \left( x^{N_{C2}};\int_0^t \nu_s ds,\nu_t,0,0 \right) \right| \nonumber \\
  & \qquad + \left| G_{t,x^{N_R}}^{+k,+l} \left( x^{N_{C2}};0,0,\int_0^\delta \nu_s' ds,0 \right) - G_{t,x^{N_R}}^{+k,+l} (x^{N_{C2}};0,0,0,0) \right| \nonumber \\
  & \; \leq \, \sum \limits_{j: j \in \bar{R}_l} 2 c \norm{f} \left( I_t^{(j)} \right)^{-1} \int_0^{\delta} \nu_s' ds. \nonumber
\end{align}
For $l \in N_2$ we obtain via (\ref{equ:fourtythree}) and (\ref{equ:fourtyfour})
\[ \left| \Delta G^{+k,+l}_{t,x^{N_R}} \left( x^{N_{C2}},\nu,\nu' \right) \right| = 0, \]
so (\ref{equ:fifty}) is valid for $k, l \in N_{C2}$. 
A similar argument shows if $\nu_{\delta}'>0$ and $\nu_t=0$, 
\[ \left| \Delta G^{+k,+l}_{t,x^{N_R}} \left( x^{N_{C2}},\nu,\nu' \right) \right| \leq \sum \limits_{j: j \in \bar{R}_k} 2 c \norm{f} \left( I_t^{(j)} \right)^{-1} \int_0^t \nu_s ds. \]

Finally, if $\nu_{\delta}>0, \nu_t>0$, then we have the trivial bound
\[ \left| \Delta G^{+k,+l}_{t,x^{N_R}} \left( x^{N_{C2}},\nu,\nu' \right) \right| \leq 4 \norm{f}. \]
%
% additional part ends

Combining all the cases we conclude that
\begin{align}
  & \left| \Delta G^{+k,+l}_{t,x^{N_R}} (x^{N_{C2}},\nu,\nu') \right| \label{equ:g_bar} \\
  & \leq \biggl\{ \1_{\{\nu_{\delta}'=\nu_t=0\}} \left( \sum \limits_{j_1: j_1 \in \bar{R}_k} \sum \limits_{j_2: j_2 \in \bar{R}_l} \left( I_t^{(j_1)} \right)^{-1} \left( I_t^{(j_2)} \right)^{-1} \int_0^{\delta} \nu_s' ds \int_0^t \nu_s ds \right) \nonumber \\
  & \qquad + \1_{\{\nu_{\delta}'=0,\nu_t>0\}} \left( \sum \limits_{j: j \in \bar{R}_l} \left( I_t^{(j)} \right)^{-1} \int_0^{\delta} \nu_s' ds \right) \nonumber \\
  & \qquad + \1_{\{\nu_{\delta}'>0,\nu_t=0\}} \left( \sum \limits_{j: j \in \bar{R}_k} \left( I_t^{(j)} \right)^{-1} \int_0^t \nu_s ds \right) + \1_{\{\nu_{\delta}'>0,\nu_t>0\}} \biggr\} c \norm{f} \nonumber \\
  & \leq \biggl\{ \1_{\{\nu_{\delta}'=\nu_t=0\}} \left( \int_0^t x_s^{(k)} ds \right)^{-1} \left( \int_0^t x_s^{(l)} ds \right)^{-1} \int_0^{\delta} \nu_s' ds \int_0^t \nu_s ds \nonumber \\
  & \qquad + \1_{\{\nu_{\delta}'=0,\nu_t>0\}} \left( \int_0^t x_s^{(l)} ds \right)^{-1} \int_0^{\delta} \nu_s' ds \nonumber \\
  & \qquad + \1_{\{\nu_{\delta}'>0,\nu_t=0\}} \left( \int_0^t x_s^{(k)} ds \right)^{-1} \int_0^t \nu_s ds + \1_{\{\nu_{\delta}'>0,\nu_t>0\}} \biggr\} c \norm{f} \nonumber \\
  & \equiv \bar{g}_{t,\delta} \!\left( x^{N_{C2}},\nu,\nu' \right) \nonumber
\end{align}

The remainder of the proof works similar to the proof in \cite{r6}. Some minor changes are necessary in the proof of continuity from below in $x_2$ (now to be replaced by $x^{N_{C2}}$) following (59) in \cite{r6}, by considering every coordinate on its own. Also, new mixed partial derivatives appear, which can be treated similarly to the ones already appearing in the proof of Proposition 14 in \cite{r6}. 
%
% additional part starts

Let $X_.^h$ be independent of $x^{(l)}$ satisfying
\[ X_t^h = h + \int_0^t \sqrt{2 \gamma_l^0 X_s^h} dB_s', \ (h>0) \]
(i.e. $X^h$ has law $P_h^0$) so that $x^{(l)} + X^h $ has law $P_{x_l+h}^l$. Therefore (\ref{equ:fourtyseven}) together with definition (\ref{equ:fourtyfive}) implies
\begin{align}
  & \frac{1}{h} \left[ (P_t f)_k (x+he_l) - (P_t f)_k (x) \right] \label{equ:fiftyfour} \\
  & = \frac{1}{h} \int \int \int \Delta G^{+k,+l}_{t,x^{N_R}} \left( x^{N_{C2}},\nu,X^h \right) d\NN_0(\nu) dP^{N_{C2}} dP_h^0. \nonumber
\end{align}
In addition (\ref{equ:g_bar}) implies (use also (\ref{equ:twentyfive}) and (\ref{equ:twentysix}) and Lemma \ref{equ:lemma_7}(a),(b) with $p=1$ or $2$ as well as Cauchy-Schwarz' inequality in the second inequality)
\begin{align}
  & \frac{1}{h} \int \int \int \left| \Delta G^{+k,+l}_{t,x^{N_R}} \left( x^{N_{C2}},\nu,X^h \right) \right| \1_{\{X_{\delta}^h=0\}} d\NN_0(\nu) dP^{N_{C2}} dP_h^0 \label{equ:fiftyfive} \\
  & \leq c \norm{f} \left[ \1_{\{k, l \in N_C\}} E^{N_{C2}} \left( \left( \int_0^t x_s^{(k)} ds \right)^{-1} \left( \int_0^t x_s^{(l)} ds \right)^{-1} \right) \right. \nonumber \\
  & \qquad \qquad \times \frac{1}{h} E_h^0 \left( \int_0^{\delta} X_s^h ds \right) \int \int_0^t \nu_s ds d\NN_0(\nu) \nonumber \\
  & \left. \qquad + \1_{\{l \in N_C\}} E^{N_{C2}} \left( \left( \int_0^t x_s^{(l)} ds \right)^{-1} \right) \frac{1}{h} E_h^0 \left( \int_0^{\delta} X_s^h ds \right) \NN_0( \nu_t>0 ) \right] \nonumber \\
  & \leq c \norm{f} \left[ \1_{\{k, l \in N_C\}} \sqrt{ E^{N_{C2}} \left( \left( \int_0^t x_s^{(k)} ds \right)^{-2} \right) } \right. \nonumber \\ 
  & \qquad \quad \times \sqrt{ E^{N_{C2}} \left( \left( \int_0^t x_s^{(l)} ds \right)^{-2} \right) } \frac{1}{h} E_h^0 \left( \int_0^{\delta} X_s^h ds \right) \int \int_0^t \nu_s ds d\NN_0(\nu) \nonumber \\
  & \left. \quad + \1_{\{l \in N_C\}} E^{N_{C2}} \left( \left( \int_0^t x_s^{(l)} ds \right)^{-1} \right) \frac{1}{h} E_h^0 \left( \int_0^{\delta} X_s^h ds \right) \NN_0( \nu_t>0 ) \right] \nonumber \\
  & \leq c \norm{f} \left( \sqrt{ t^{-2} (t+x_k)^{-2} } \sqrt{ t^{-2} (t+x_l)^{-2} } \frac{1}{h} h \delta t + t^{-1} (t+x_l)^{-1} \frac{1}{h} h \delta t^{-1} \right) \nonumber \\
  & \leq c \norm{f} t^{-3} \delta. \nonumber
\end{align}
As $G$ is bounded and continuous on $(0,\infty)^{|N_R|} \times \RR_+^{|N_{C2}|}$, Proposition \ref{equ:prop_9} implies
\begin{align}
  & \lim \limits_{h \downarrow 0} \frac{1}{h} E_h^0 \left( \Delta G^{+k,+l}_{t,x^{N_R}} \left( x^{N_{C2}},\nu,X^h \right) \1_{\{X_{\delta}^h>0\}} \right) \label{equ:fiftysix} \\
  & = \int \Delta G^{+k,+l}_{t,x^{N_R}} \left( x^{N_{C2}},\nu,\nu' \right) \1_{\{\nu_{\delta}'>0\}} d\NN_0(\nu') \nonumber
\end{align}
for all $\delta>0$, pointwise in $(x^{N_{C2}},\nu) \in \SC\!\left( \RR_+,\RR_+^{|N_{C2}|} \right) \times \SC_{ex}$. Use (\ref{equ:g_bar}) to see that
\begin{align*}
  & \frac{1}{h} E_h^0 \left( \Delta G^{+k,+l}_{t,x^{N_R}} \left( x^{N_{C2}},\nu,X^h \right) \1_{\{X_{\delta}^h>0\}} \right) \\
  & \leq c \norm{f} \frac{ P_h^0( X_{\delta}^h > 0 ) }{h} \left[ \1_{\{k \in N_C\}} \left( \int_0^t x_s^{(k)} ds \right)^{-1} \int_0^t \nu_s ds + \1_{\{\nu_t>0\}} \right] \\
  & \leq c \norm{f} \delta^{-1} \left[ \1_{\{k \in N_C\}} \left( \int_0^t x_s^{(k)} ds \right)^{-1} \int_0^t \nu_s ds + \1_{\{\nu_t>0\}} \right],
\end{align*}
the last by (\ref{equ:twentyeight}). The final expression is integrable with respect to $P^{N_{C2}} \times \NN_0$ and so by dominated convergence we conclude from (\ref{equ:fiftysix}) that
\begin{align}
  & \lim \limits_{h \downarrow 0} \frac{1}{h} \int \int \int \Delta G^{+k,+l}_{t,x^{N_R}} \left( x^{N_{C2}},\nu,X^h \right) \1_{\{X_{\delta}^h>0\}} d\NN_0(\nu) dP^{N_{C2}} dP_h^0 \label{equ:fiftyseven} \\
  & = E^{N_{C2}} \left( \int \int \Delta G^{+k,+l}_{t,x^{N_R}} \left( x^{N_{C2}},\nu,\nu' \right) \1_{\{\nu_{\delta}'>0\}} d\NN_0(\nu) d\NN_0(\nu') \right) \nonumber
\end{align}
for all $\delta>0$. Use (\ref{equ:g_bar}) as in the derivation of (\ref{equ:fiftyfive}) to see
\begin{align}
  & E^{N_{C2}} \left[ \int \int \sup \limits_{x^{N_R}} \left| \Delta G^{+k,+l}_{t,x^{N_R}} \left( x^{N_{C2}},\nu,\nu' \right) \right| \1_{\{ \nu_{\delta}'=0 \}} d\NN_0(\nu) d\NN_0(\nu') \right] \label{equ:fiftyeight} \\
  & \leq c \norm{f} t^{-3} \delta. \nonumber
\end{align}
Use (\ref{equ:fiftyfour}), (\ref{equ:fiftyfive}), (\ref{equ:fiftyseven}) and (\ref{equ:fiftyeight}) and take $\delta \downarrow 0$ to conclude
\begin{equation} \label{equ:fiftynine}
  \frac{\partial^+}{\partial x_l^+} (P_t f)_k(x) = E^{N_{C2}} \left[ \int \int \Delta G^{+k,+l}_{t,x^{N_R}} \left( x^{N_{C2}},\nu,\nu' \right) d\NN_0(\nu) d\NN_0(\nu') \right]. 
\end{equation}
Recall from Lemma \ref{equ:lemma_11}(a) that
\[ \left| \frac{\partial G_{t,x^{N_R}}}{\partial x_j} (y) \right| \leq \norm{f} (\gamma_j^0 y_j)^{-1/2}. \]
This, together with (\ref{equ:fiftyeight}) and Lemma \ref{equ:lemma_7}(b), implies for $0 < \delta \leq t$
\begin{align*}
  & \left| E^{N_{C2}} \left[ \int \int \left( \Delta G^{+k,+l}_{t,x^{N_R}} \left( x^{N_{C2}},\nu,\nu' \right) - \Delta G^{+k,+l}_{t,\bar{x}^{N_R}} \left( x^{N_{C2}},\nu,\nu' \right) \right) d\NN_0(\nu) d\NN_0(\nu') \right] \right| \\
  & \leq c \norm{f} t^{-3} \delta + E^{N_{C2}} \left[ \int \int \1_{\{ \nu_{\delta}>0,\nu_{\delta}'>0 \}} \right. \\
  & \left. \qquad \times \sum \limits_{j \in N_R} c \norm{f} \left( I_t^{(j)} \right)^{-1/2} \left| x_j - \bar{x}_j \right| d\NN_0(\nu) d\NN_0(\nu') \right] \\
  & \leq c \norm{f} \left[ t^{-3} \delta + \delta^{-2} \sum \limits_{j \in N_R} t^{-1/2} \min \limits_{i \in C_j} \left\{ (t+x_i)^{-1/2} \right\} \left| x_j - \bar{x}_j \right| \right] \\
  & \leq c \norm{f} \left[ t^{-3} \delta + \delta^{-2} t^{-1} \left| x^{N_R} - \bar{x}^{N_R} \right| \right].
\end{align*}
(\ref{equ:twentyfive}) was used in the next to last line. By first choosing $\delta$ small and then $|x^{N_R}-\bar{x}^{N_R}|$ small, one sees that the right hand side of (\ref{equ:fiftynine}) is continuous in $x^{N_R}$, uniformly in $x^{N_{C2}} \in \RR_+^{|N_{C2}|}$. 

Next let $m \in N_{C2}$. If $x_m^n \uparrow x_m$, then we may construct $\{x^n\}$ such that $x^n \uparrow x^{(m)}$ in $\SC(\RR_+,\RR_+)$, $x^n$ with law $P_{x_m^n}^m$ and $x^{(m)}$ with law $P_{x_m}^m$ (e.g. $x^n-x^{n-1}$ has law $P_{x_m^n-x_m^{n-1}}^0$ and are independent). Then $x^{N_{C2}, n} \rightarrow x^{N_{C2}}$ in $\SC(\RR_+,\RR_+^{|N_{C2}|})$, where $x^{N_{C2}, n}$ has law $\left( \prodm \limits_{i \in N_{C2}} P_{\1(i \neq m) x_i + \1(i=m) x_m^n}^i \right)$ and $x^{N_{C2}}$ has law \\ $\left( \prodm \limits_{i \in N_{C2}} P_{x_i}^i \right)$. Now $\Delta G^{+k,+l}_{t,x^{N_R}} \left( x^{N_{C2},n},\nu,\nu' \right) \rightarrow \Delta G^{+k,+l}_{t,x^{N_R}} \left( x^{N_{C2}},\nu,\nu' \right)$ pointwise (by an elementary argument using (\ref{equ:fourtyfive}) and the continuity of $f$) and (by (\ref{equ:g_bar}))
\[ \left| \Delta G^{+k,+l}_{t,x^{N_R}} \left( x^{N_{C2},n},\nu,\nu' \right) \right| \leq \bar{g}_{t,\delta} (x^{N_{C2},1},\nu,\nu'), \]
which is integrable with respect to $P^{N_{C2}} \times \NN_0 \times \NN_0$ by Lemma \ref{equ:lemma_7}(b). Dominated convergence now shows that
\begin{align*}
  & \lim \limits_{n \rightarrow \infty} E^{N_{C2},n} \left[ \int \int \Delta G^{+k,+l}_{t,x^{N_R}} \left( x^{N_{C2}},\nu,\nu' \right) d\NN_0(\nu) d\NN_0(\nu') \right] \\
  & = E^{N_{C2}} \left[ \int \int \Delta G^{+k,+l}_{t,x^{N_R}} \left( x^{N_{C2}},\nu,\nu' \right) d\NN_0(\nu) d\NN_0(\nu') \right].
\end{align*}
A similar argument holds if $x_m^n \downarrow x_m$, so the right-hand side of (\ref{equ:fiftynine}) is also continuous in $x_m$. Combined with the above this shows that $\frac{\partial^+}{\partial x_l^+} (P_t f)_k(x)$ is continuous in $x \in \SSS_0$. An elementary calculus exercise using the continuity in $x_l$ shows this in fact equals $(P_t f)_{kl} (x)$ and so
\[ (P_t f)_{kl} (x) = E^{N_{C2}} \left[ \int \int \Delta G^{+k,+l}_{t,x^{N_R}} \left( x^{N_{C2}},\nu,\nu' \right) d\NN_0(\nu) d\NN_0(\nu') \right]. \]
This together with (\ref{equ:g_bar}) (take $\delta = t$), Lemma \ref{equ:lemma_7}(b), (\ref{equ:twentyfive}) and (\ref{equ:twentysix}) gives the upper bound
\[ \norm{(P_t f)_{kl}} \leq c \frac{\norm{f}}{t^2}. \]

Let us turn to derivatives with respect to $x_j, j \in N_R$.

Lemma \ref{equ:lemma_7}(b) and dominated convergence allows us to differentiate through the integral sign and conclude for $i,j \in N_R$ (by Lemmas 7(b) and 11(a), (b)) that
\begin{align}
  \dx{j} P_t f (x) &= E^{N_{C2}} \left[ \dx{j} G_{t,x^{N_R}} \left( I_t^{N_R} , x_t^{N_{C2}} \right) \right] \leq c \frac{\norm{f}}{t} \nonumber \\
  \frac{\partial^2}{\partial x_i \partial x_j} P_t f (x) &= E^{N_{C2}} \left[ \frac{\partial^2}{\partial x_i \partial x_j} G_{t,x^{N_R}} \left( I_t^{N_R} , x_t^{N_{C2}} \right) \right] \leq c \frac{\norm{f}}{t^2}. \label{equ:sixtyone}
\end{align}
Now use (\ref{equ:sixtyone}), Lemma \ref{equ:lemma_11}(b) and Lemma \ref{equ:lemma_7}(b) to see that $\frac{\partial^2}{\partial x_i \partial x_j} P_t f (x^{N_R},x^{N_{C2}})$ is continuous in $x^{N_R}$ uniformly in $x^{N_{C2}}$. The weak continuity of \\ $E^{N_{C2}} = ( \prodm \limits_{i \in N_{C2}} P_{x_i}^i )$ in $x^{N_{C2}}$ (e.g. by our usual coupling argument), the continuity of $\frac{\partial^2}{\partial x_i \partial x_j} G_{t,x^{N_R}} \left( y^{N_R}, y^{N_{C2}} \right)$ in $y^{N_R} \in (0,\infty)^{|N_R|}$ (see Lemma \ref{equ:lemma_11}(b)), the bound in Lemma \ref{equ:lemma_11}(b) with $m_1=m_2=1$ resp. $m_1=2,m_2=0$, and Lemma \ref{equ:lemma_7}(b) imply $\frac{\partial^2}{\partial x_i \partial x_j} P_t f (x^{N_R},x^{N_{C2}})$ is continuous in $x^{N_{C2}}$ for each $x^{N_R}$. Therefore $(P_t f)_{ij}$ is jointly continuous.

For the mixed partial, first note by Lemma \ref{equ:lemma_11}(b) that for $i,j \in N_R$
\begin{equation} \label{equ:sixtytwo}
  \left| \frac{\partial}{\partial y_i} \dx{j} G_{t,x^{N_R}} (y^{N_R},y^{N_{C2}}) \right| \leq c \norm{f} y_i^{-1} y_j^{-1/2}.
\end{equation}
Let $k \in N_{C2}$ and argue as for $(P_t f)_{kl}$, using (\ref{equ:sixtytwo}) in place of Lemma \ref{equ:lemma_11}(a), to see that
\[ (P_t f)_{kj} \left( x^{N_R},x^{N_{C2}} \right) = E^{N_{C2}} \left[ \int \dx{j} \Delta G_{t,x^{N_R}}^{+k} \left( x^{N_{C2}}; \int_0^t \nu_s ds, \nu_t \right) d\NN_0(\nu) \right]. \]
From (\ref{equ:sixtytwo}) we have for $0 < \delta \leq t$ together with the triangle inequality,
\begin{align}
  & E^{N_{C2}} \left[ \int \sup \limits_{x^{N_R}} \left| \dx{j} \Delta G_{t,x^{N_R}}^{+k} \left( x^{N_{C2}}; \int_0^t \nu_s ds, \nu_t \right) \right| \1_{\{\nu_{\delta} = 0\}} d\NN_0(\nu) \right] \label{equ:sixtythree}  \\
  & \leq c \norm{f} \sum \limits_{i: i \in \bar{R}_k} E^{N_{C2}} \left[ \left( I_t^{(i)} \right)^{-1} \left( I_t^{(j)} \right)^{-1/2} \right] \int \int_0^{\delta} \nu_s ds d\NN_0(\nu) \nonumber \\
  & \leq c \norm{f} t^{-3} \delta, \nonumber
\end{align}
the last from Lemma \ref{equ:lemma_7}(b) and (\ref{equ:twentysix}). Just as for $(P_t f)_{kl}, k, l \in N_{C2}$, we may use this with Lemma \ref{equ:lemma_11}(b) and dominated convergence to conclude that $(P_t f)_{kj}$ is continuous in $x^{N_R}$ uniformly in $x^{N_{C2}}$. Continuity in $x^{N_{C2}}$ for each $x^{N_R}$ is obtained by an easy modification of the argument for $(P_t f)_{kl}$, using the bound (\ref{equ:sixtythree}). This completes the proof that $P_t f$ is $\SC^2$.

Finally to get a (crude) upper bound on $|(P_t f)_{kj}|$ use (\ref{equ:sixtythree}) with $\delta = t$ and Lemma \ref{equ:lemma_11}(a) to see
\begin{align*}
  |(P_t f)_{kj} (x) | & \leq E^{N_{C2}} \left[ \int \left| \dx{j} \Delta G_{t,x^{N_R}}^{+k} \left( x^{N_{C2}}; \int_0^t \nu_s ds, 0 \right) \right| \1_{\{\nu_t = 0\}} d\NN_0(\nu) \right] \\
  & \quad + E^{N_{C2}} \left[ \int \1_{\{\nu_t > 0\}} \left[ \left| \dx{j} G_{t,x^{N_R}}^{+k} \left( x^{N_{C2}} ; \int_0^t \nu_s ds , \nu_t \right) \right| \right. \right. \\
  & \left. \left. \quad \qquad + \left| \dx{j} G_{t,x^{N_R}}^{+k} \left( x^{N_{C2}} ; 0, 0 \right) \right| \right] d\NN_0(\nu) \right] \\
  & \leq c \norm{f} t^{-2} + c \norm{f} E^{N_{C2}} \left[ \left( I_t^{(j)} \right)^{-1/2} \right] \NN_0(\nu_t > 0) \\
  & \leq c \norm{f} t^{-2},
\end{align*}
by Lemma \ref{equ:lemma_7}(b) and (\ref{equ:twentyfive}).
%
% additional part ends
%
\end{proof}
%
% ============================================================
%
\subsection{$L^{\infty}$ bounds of certain differentiation operators applied to $P_t f$ and equivalence of norms} \label{equ:section_2_3}
%
% ============================================================

We continue to work with the semigroup $P_t$ on the state space $\SSS_0$ corresponding to the generator $\SA^0$. Recall the definitions of the semigroup norm $|f|_\alpha$ from (\ref{equ:f_alpha_definition}) and of the associated Banach space of functions $S^\alpha$ from (\ref{equ:S_alpha_definition}) in what follows.
%
% ------------------------------------------------------------
%
\begin{pro} \label{equ:proposition_16}
If $f$ is a bounded Borel function on $\SSS_0$ then for $j \in N_R$ 
\begin{equation} \label{equ:sixtyfour}
  \left| \dx{j} P_tf (x) \right| \leq \frac{c \norm{f}}{\sqrt{t} \max \limits_{i \in C_j} \{ \sqrt{t+x_i} \}}, 
\end{equation}
and
\begin{equation} \label{equ:sixtyfive}
  \left| \max \limits_{i \in C_j} \{x_i\} \ddx{j} P_tf (x) \right| \leq \frac{c \norm{f}}{t}. 
\end{equation}
If $f \in \SSS^{\alpha}$, then
\begin{equation} \label{equ:sixtysix}
  \left| \dx{j} P_tf (x) \right| \leq \frac{c |f|_{\alpha} t^{\frac{\alpha}{2}-\frac{1}{2}}}{\max \limits_{i \in C_j} \{ \sqrt{t+x_i} \}} \leq c |f|_{\alpha} t^{\frac{\alpha}{2}-1}, 
\end{equation}
and
\begin{equation} \label{equ:sixtyseven}
  \left| \max \limits_{i \in C_j} \{x_i\} \ddx{j} P_tf (x) \right| \leq c |f|_{\alpha} t^{\frac{\alpha}{2}-1}. 
\end{equation}
\end{pro}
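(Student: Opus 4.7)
My plan is to establish the unimproved bounds (\ref{equ:sixtyfour}) and (\ref{equ:sixtyfive}) by a direct calculation on the semigroup representation, and then to upgrade them to (\ref{equ:sixtysix}) and (\ref{equ:sixtyseven}) by a dyadic telescoping argument that exploits the semigroup norm. For (\ref{equ:sixtyfour}), I would differentiate the representation (\ref{equ:twenty}) under the expectation and combine the pointwise estimate $|\dx{j} G(y,z)| \leq \norm{f}(\gamma_j^0 y_j)^{-1/2}$ from Lemma \ref{equ:lemma_11}(a) with the negative moment bound $E^{N_{C2}}[(I_t^{(j)})^{-1/2}] \leq c\, t^{-1/2} \min_{i \in C_j}\{(t+x_i)^{-1/2}\}$ from Lemma \ref{equ:lemma_7}(b); rewriting the minimum as $(\max_{i \in C_j}\sqrt{t+x_i})^{-1}$ then yields the claim. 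The bound (\ref{equ:sixtyfive}) follows in the same manner with $|\ddx{j} G| \leq c\norm{f} y_j^{-1}$ and Lemma \ref{equ:lemma_7}(b) at $p=1$; the factor $\max_{i \in C_j} x_i$ on the left is absorbed into the denominator via the elementary inequality $\max_{i \in C_j} x_i \leq \max_{i \in C_j}(t+x_i)$.

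For the improved bounds under $f \in \SSS^\alpha$, I plan to use a dyadic telescoping in time. The Chapman--Kolmogorov identity gives
\[ \dx{j} P_{2^k t} f(x) - \dx{j} P_{2^{k+1} t} f(x) = \dx{j} P_{2^k t}\bigl(f - P_{2^k t} f\bigr)(x), \]
while the crude bound (\ref{equ:sixtyfour}) applied to $P_{2^n t} f$ forces $\dx{j} P_{2^n t} f(x) \to 0$ as $n \to \infty$. Telescoping then yields
\[ \dx{j} P_t f(x) = \sum_{k=0}^{\infty} \dx{j} P_{2^k t}\bigl(f - P_{2^k t} f\bigr)(x). \]
Applying (\ref{equ:sixtyfour}) term by term, and using the defining inequality $\norm{f - P_{2^k t} f} \leq |f|_\alpha (2^k t)^{\alpha/2}$, reduces the problem to bounding
\[ \sum_{k=0}^{\infty} \frac{(2^k t)^{(\alpha-1)/2}}{\max_{i \in C_j}\sqrt{2^k t + x_i}}. \]
A case analysis separating $2^k t \leq \max_{i \in C_j} x_i$ from $2^k t > \max_{i \in C_j} x_i$ shows that both resulting partial sums are geometric (since $(\alpha-1)/2 < 0$ and $\alpha/2-1 < 0$) and each is controlled by its first term; together they match, up to a constant, the desired bound $|f|_\alpha t^{(\alpha-1)/2}/\max_{i \in C_j}\sqrt{t+x_i}$ of (\ref{equ:sixtysix}). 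The estimate (\ref{equ:sixtyseven}) is obtained by the same telescoping, now invoking (\ref{equ:sixtyfive}) in place of (\ref{equ:sixtyfour}): after multiplication by $\max_{i \in C_j} x_i$ each term contributes $c |f|_\alpha (2^k t)^{\alpha/2 - 1}$, and summing a convergent geometric series delivers the bound of order $|f|_\alpha t^{\alpha/2 - 1}$.

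The only step I expect to require care is the case analysis for the dyadic sum, which is routine bookkeeping once the telescoping identity is in hand; no deeper obstacle is anticipated, since the parameter range $\alpha \in (0,1)$ makes the relevant exponents strictly negative and the geometric series convergent.
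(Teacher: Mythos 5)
Your proposal is correct in substance, and for (\ref{equ:sixtyfour}), (\ref{equ:sixtysix}), (\ref{equ:sixtyseven}) it matches the paper's argument: (\ref{equ:sixtyfour}) comes from differentiating the representation under the expectation together with Lemma \ref{equ:lemma_11}(a) and Lemma \ref{equ:lemma_7}(b), and the passage to $f\in\SSS^\alpha$ is exactly the dyadic telescoping from \cite{r2} that the paper uses, with $\dx{j}P_{2^k t}f - \dx{j}P_{2^{k+1}t}f = \dx{j}P_{2^k t}(f - P_{2^k t}f)$ and $\norm{P_{2^k t}f-f}\leq |f|_\alpha(2^k t)^{\alpha/2}$. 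One small simplification: your case analysis $2^k t \lessgtr \max_{i\in C_j}x_i$ is unnecessary, since $\max_{i\in C_j}\sqrt{2^k t + x_i} \geq \max_{i\in C_j}\sqrt{t+x_i}$ lets you pull the denominator straight out and sum the geometric series $\sum_k (2^k t)^{(\alpha-1)/2}$ directly; the paper does this in one line.

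The genuine difference is your treatment of (\ref{equ:sixtyfive}). You differentiate twice through the expectation, use Lemma \ref{equ:lemma_11}(a) with $k=2$ to get $|\ddx{j}G|\leq c\norm{f}\,y_j^{-1}$, apply Lemma \ref{equ:lemma_7}(b) with $p=1$ to get $|\ddx{j}P_t f(x)|\leq c\norm{f}\,t^{-1}\min_{i\in C_j}(t+x_i)^{-1}$, and then absorb $\max_{i\in C_j}x_i$ via $\max_i x_i\leq\max_i(t+x_i)$. This is a clean, self-contained calculation, and the needed differentiation under the integral sign is already justified in the paper's Proposition \ref{equ:proposition_14} (see display (\ref{equ:sixtyone})). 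The paper instead exploits that $\dx{j}$ commutes with $\SA^0$ (the coefficients of $\SA^0$ do not depend on $x_j$ for $j\in N_R$), hence with $P_t$, writes $\ddx{j}P_t f = \dx{j}P_{t/2}\bigl(\dx{j}P_{t/2}f\bigr)$, and then applies the weighted form of (\ref{equ:sixtyfour}) twice across the split. What the commutation route buys is a statement that follows formally from the first-derivative bound, with no need to re-invoke the $k=2$ case of Lemma \ref{equ:lemma_11} or the $p=1$ case of Lemma \ref{equ:lemma_7}; what your direct route buys is transparency, since the double application of the weighted (\ref{equ:sixtyfour}) across the time split requires a little care about where the spatial weight sits relative to the inner $P_{t/2}$. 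Both routes give the same constant-order bound.
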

%
% ------------------------------------------------------------
%
\begin{proof}
The proof proceeds as in \cite{r6}, Proposition 16 except for minor changes. 
%
% additional part starts

We begin with the first derivative for $f$ bounded Borel measurable. Use (\ref{equ:fourtytwo}), Proposition \ref{equ:proposition_14} (for the existence of $(P_t f)_j$) and Lemma \ref{equ:lemma_7}(b) to see that
\[ \left| \dx{j} P_t f (x) \right| \leq c \norm{f} E^{N_{C2}} \left[ \left( I_t^{(j)} \right)^{-1/2} \right] \leq \frac{c \norm{f}}{\sqrt{t} \max \limits_{i \in C_j} \{ \sqrt{t+x_i} \}}. \]

We now turn to the second derivative. Note that $\SA^0$ (cf. (\ref{equ:seven})) and $\dx{j}$ commute and therefore the semigroup $P_t$ and $\dx{j}$ commute. Therefore a double application of
\[ (\ref{equ:sixtyfour}) \iff \left\{ \sqrt{t} \max \limits_{i \in C_j} \{ \sqrt{t+x_i} \} \left| \dx{j} P_tf (x) \right| \leq c \norm{f} \right\} \]
gives
\begin{align*}
  & \left| \max \limits_{i \in C_j} \{x_i\} \ddx{j} P_tf (x) \right| \\
  & \leq \frac{c}{t} \left| \sqrt{\frac{t}{2}} \max \limits_{i \in C_j} \left\{ \sqrt{\frac{t}{2}+x_i} \right\} \dx{j} P_{\frac{t}{2}} \left( \sqrt{\frac{t}{2}} \max \limits_{i \in C_j} \left\{ \sqrt{\frac{t}{2}+x_i} \right\} \dx{j} P_{\frac{t}{2}} f \right)\!(x) \right| \\
  & \leq \frac{c}{t} \bignorm{ \sqrt{\frac{t}{2}} \max \limits_{i \in C_j} \left\{ \sqrt{\frac{t}{2}+x_i} \right\} \dx{j} P_{\frac{t}{2}} f (x) } \\
  & \leq \frac{c \norm{f}}{t}.
\end{align*}
This proves the first two inequalities and also shows that
\[ \lim \limits_{t \rightarrow \infty} \bignorm{ \dx{j} P_t f } = 0. \]
%
% additional part ends

If $f \in \SSS^{\alpha}$, we proceed as in \cite{r2} and write
\[ \left| \dx{j} P_{2t} f (x) - \dx{j} P_t f (x) \right| = \left| \dx{j} P_t (P_t f - f)(x) \right|. \]
Applying the estimate (\ref{equ:sixtyfour}) to $g = P_t f - f$ and using the definition of $|f|_{\alpha}$ we get
\[ \left| \dx{j} P_{2t} f (x) - \dx{j} P_t f (x) \right| \leq \frac{c \norm{g}}{\sqrt{t} \max \limits_{i \in C_j} \{ \sqrt{t+x_i} \}} \leq \frac{c |f|_{\alpha} t^{\alpha/2}}{\sqrt{t} \max \limits_{i \in C_j} \{ \sqrt{t+x_i} \}}. \]
This together with 
\[ (\ref{equ:sixtyfour}) \Rightarrow \lim \limits_{t \rightarrow \infty} \left| \dx{j} P_t f(x) \right| = 0 \]
implies that
\begin{align*}
  \left| \dx{j} P_t f (x) \right| &\leq \sum \limits_{k=0}^{\infty} \left| \dx{j} \left( P_{2^k t} f - P_{2^{(k+1)} t} f \right) (x) \right| \\ 
  &\leq |f|_{\alpha} \sum \limits_{k=0}^{\infty} \left( 2^k t \right)^{\frac{\alpha}{2}-\frac{1}{2}} \frac{c}{\max \limits_{i \in C_j} \{ \sqrt{2^k t + x_i} \}} \\
  &\leq |f|_{\alpha} t^{\frac{\alpha}{2}-\frac{1}{2}} \frac{c}{\max \limits_{i \in C_j} \{ \sqrt{t + x_i} \}}.
\end{align*}
This then immediately yields (\ref{equ:sixtysix}). Use (\ref{equ:sixtyfive}) to derive (\ref{equ:sixtyseven}) in the same way as (\ref{equ:sixtyfour}) was used to prove (\ref{equ:sixtysix}).
\end{proof}
%
% ------------------------------------------------------------
%
\begin{nota} \label{equ:nota_R_S}
If $w>0$, set $p_j(w) = \frac{w^j}{j!} e^{-w}$. For $\{r_j(t)\}$ and $\{e_j(t)\}$ as in Lemma \ref{equ:lemma_10}, let $R_k = R_k(t) = \sum_{j=1}^k r_j(t)$ and $S_k = S_k(t) = \sum_{j=1}^k e_j(t)$. 
\end{nota}
%
% ------------------------------------------------------------
%
\begin{nota} \label{equ:notation_changes}
If $X \in \SC\!\left( \RR_+,\RR_+^{N_{C2}} \right)$, $Y, Y', Z, Z' \in \SC(\RR_+,\RR_+)$, $\eta, \eta'$, $\theta, \theta' \in \SC_{ex}$ and $m, n, k, l \in N_{C2}$, where $m \neq n$ let
\begin{align*}
  & G_{t,x^{N_R}}^{m,n,+k,+l} \!\left( X, Y_t, Z_t, Y_t', Z_t';\int_0^t \eta_s ds,\theta_t,\int_0^t \eta_s' ds,\theta_t' \right) \\
  & \equiv G_{t,x^{N_R}} \Biggl( \biggl\{ \int_0^t \sum \limits_{i \in C_j \backslash \{m,n\} } X_s^i ds + \1_{\{m \in C_j\}} Y_t + \1_{\{n \in C_j\}} Y_t' \\ 
  & \qquad \qquad \qquad \qquad \qquad \qquad \qquad + \int_0^t \1_{\{k \in C_j\}} \eta_s + \1_{\{l \in C_j\}} \eta_s' ds \biggr\}_{j \in N_R} , \\
  & \qquad \qquad \quad \biggl\{ \1_{\{i \notin \{m,n\}\}} X_t^i + \1_{\{i=m\}} Z_t + \1_{\{i=n\}} Z_t' \\
  & \qquad \qquad \qquad \qquad \qquad \qquad \qquad + \1_{\{i=k\}} \theta_t + \1_{\{i=l\}} \theta_t' \biggr\}_{i \in N_{C2}} \Biggr).
\end{align*}
The notation indicates that the one-dimensional coordinate processes \\
$\int_0^t X_s^m ds, X_t^m$ resp. $\int_0^t X_s^n ds, X_t^n$ will be replaced by the processes $Y_t, Z_t$ resp. $Y_t', Z_t'$ (note that for $m \in N_2$ this only implies a change from $X_t^m$ into $Z_t$). Additionally, we add $\int_0^t \nu_s ds, \theta_t, \int_0^t \nu_s' ds$ and $\theta_t'$ as before. The terms
\begin{equation} \label{equ:more_defs}
  G_{t,x^{N_R}}^{m,+k,+l}, G_{t,x^{N_R}}^{m,+k}, G_{t,x^{N_R}}^{m,n,+l}, G_{t,x^{N_R}}^{m,n}, G_{t,x^{N_R}}^{m}, \Delta G_{t,x^{N_R}}^{m,+k,+l} \mbox{ etc.} 
\end{equation}
will then be defined in a similar way, where for instance $G_{t,x^{N_R}}^{m}$ only refers to replacing the processes $\int_0^t X_s^m ds, X_t^m$ via $Y_t, Z_t$ but doesn't involve adding processes.
\end{nota}
%
% ------------------------------------------------------------
%
\begin{pro} \label{equ:proposition_17}
If $f$ is a bounded Borel function on $\SSS_0$, then for $i \in N_{C2}$
\begin{equation} \label{equ:seventy}
  \left| \dx{i} P_tf (x) \right| \leq \frac{c \norm{f}}{\sqrt{t} \sqrt{t+x_i}}, 
\end{equation}
and
\begin{equation} \label{equ:seventyone}
  \left| x_i \ddx{i} P_tf (x) \right| \leq \frac{c x_i \norm{f}}{t (t+x_i)} \leq \frac{c \norm{f}}{t}.
\end{equation}
If $f \in \SSS^{\alpha}$, then
\[ \left| \dx{i} P_tf (x) \right| \leq \frac{c |f|_{\alpha} t^{\frac{\alpha}{2}-\frac{1}{2}}}{\sqrt{t+x_i}} \leq c |f|_{\alpha} t^{\frac{\alpha}{2}-1}, \]
and
\[ \left| x_i \ddx{i} P_tf (x) \right| \leq c |f|_{\alpha} t^{\frac{\alpha}{2}-1}. \]
\end{pro}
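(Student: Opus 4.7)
My plan is to follow the outline of Proposition \ref{equ:proposition_16}: prove (\ref{equ:seventy}) for bounded Borel $f$ first, then obtain (\ref{equ:seventyone}) as its second-order analogue, and finally extend to $f \in \SSS^{\alpha}$ via a telescoping argument. The main structural difference is that Lemma \ref{equ:lemma_11}(a) only controls derivatives of $G$ in the $y$-slots (the $I_t^{(j)}$-arguments) and not in the $z_i$-slot, so the sharper $\sqrt{t+x_i}$-improvement in the denominator of (\ref{equ:seventy}) must come from the smoothing action of the $i$-th Feller immigration process itself, accessed through the canonical-measure representations (\ref{equ:fourtyseven}) and (\ref{equ:fourtyseven_b}) from Proposition \ref{equ:proposition_14}.

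For (\ref{equ:seventy}), I would start from the representation
\[
(P_t f)_i(x) = E^{N_{C2}}\!\left[ \int \Delta G^{+i}_{t,x^{N_R}}(x^{N_{C2}},\nu)\, d\NN_0(\nu) \right]
\]
and split $\Delta G^{+i}$ as the sum of a $y$-shift (adding $\int_0^t \nu_s\, ds$ to $I_t^{(j)}$ for each $j \in \bar{R}_i$) plus a $z_i$-shift (adding $\nu_t$ to the $i$-th coordinate of the second argument). The $y$-piece is controlled by Lemma \ref{equ:lemma_11}(a); combined with Lemma \ref{equ:lemma_7}(b) (note that $i \in C_j$ for every $j \in \bar{R}_i$) and the identity $\int \int_0^t \nu_s\, ds\, d\NN_0(\nu) = t$, it contributes at most $c \norm{f}(t+x_i)^{-1} \le c \norm{f}/\sqrt{t(t+x_i)}$. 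For the $z_i$-piece I would integrate the translation of the $i$-th Feller immigration transition kernel $p^i_t(x_i,\cdot)$ by $\nu_t$ against $G$, using the key density-smoothing estimate $\int |\partial_{z_i} p^i_t(x_i,z_i)|\, dz_i \le c/\sqrt{t(t+x_i)}$, to obtain a contribution bounded by $c \norm{f}\, \nu_t/\sqrt{t(t+x_i)}$; integrating against $\NN_0$ and using $\int \nu_t\, d\NN_0(\nu) = 1$ completes (\ref{equ:seventy}).

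For (\ref{equ:seventyone}) the commutator $[\partial_i, \SA^0] = \gamma_i^0 \partial_i^2 + \sum_{j \in \bar{R}_i} \gamma_j^0 \partial_j^2$ is not identically zero, so the clean iteration used in Proposition \ref{equ:proposition_16} is unavailable. Instead I would work directly with (\ref{equ:fourtyseven_b}), use Lemma \ref{equ:lemma_13} to split $\Delta G^{+i,+i}$ into a $yy$-term, two mixed $y$-$z_i$ terms, and a $z_iz_i$-term, and bound each: the $yy$-term via Lemma \ref{equ:lemma_11}(b) and Lemma \ref{equ:lemma_7}(b); the mixed terms by combining Lemma \ref{equ:lemma_11}(a) with the first-order density-smoothing bound; and the $z_iz_i$-term via the second-order analogue $\int |\partial_{z_i}^2 p^i_t(x_i,z_i)|\, dz_i \le c/[t(t+x_i)]$. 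Multiplying by $x_i$ and using $\int \nu_t\, d\NN_0 = \int \nu_t'\, d\NN_0 = 1$ yields the first inequality in (\ref{equ:seventyone}); the second is immediate from $x_i/(t+x_i) \le 1$. The H\"older versions follow exactly as in Proposition \ref{equ:proposition_16}: write $\partial_i P_{2^{k+1} t} f - \partial_i P_{2^k t} f = \partial_i P_{2^k t}(P_{2^k t} f - f)$, apply the just-proved bound with $\norm{P_{2^k t} f - f} \le |f|_{\alpha} (2^k t)^{\alpha/2}$, and sum the resulting geometric series (and similarly for the second-derivative H\"older bound).

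The main technical obstacle is the pair of density-smoothing estimates $\int |\partial_{z_i}^k p^i_t(x_i,z_i)|\, dz_i \le c\,(t(t+x_i))^{-k/2}$ for $k=1,2$; once these are in hand, the rest is a routine adaptation of Proposition \ref{equ:proposition_16}, and without them one only recovers the suboptimal bound $c\norm{f}/t$ which fails to decay as $x_i \to \infty$. These estimates can be obtained either by direct computation from the explicit Bessel-type density of the Feller immigration process, or, more in keeping with the paper's probabilistic style, by combining the Poisson cluster decomposition of Lemma \ref{equ:lemma_10} with the canonical-measure limit of Proposition \ref{equ:prop_9} to quantify the $x_i$-sensitivity of the surviving cluster count.
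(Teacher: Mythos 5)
Your plan diverges from the paper's at the key step, and the divergence introduces a genuine gap. The paper does not pass through any transition-density smoothing estimate. Instead it splits the canonical-measure integral by the event $\{\nu_t=0\}$ versus $\{\nu_t>0\}$; the first piece is handled exactly as you handle your ``$y$-piece'' (Lemma \ref{equ:lemma_11} together with Lemma \ref{equ:lemma_7}(b) and $\int\!\int_0^t\nu_s\,ds\,d\NN_0=t$), while on $\{\nu_t>0\}$ the paper invokes the cluster decomposition of Lemma \ref{equ:lemma_10} with $\rho=0$, changes measure from $\NN_0$ to $P_t^*$, and performs a (double, for (\ref{equ:seventyone})) summation by parts over the Poisson cluster count $N_t$ with mean $w=x_i/(\gamma_i^0 t)$. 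The $x_i$-improvement in the denominator then comes entirely from $\sum_n|p_{n-1}(w)-p_n(w)|\leq c(1\wedge w^{-1/2})$ and its second-difference analogue, not from any derivative of a transition density. Your ``$z_i$-shift piece'' isolates a term that the paper never isolates, and the isolation destroys a cancellation: on $\{\nu_t>0\}$ the excursion $\nu$ shifts the $y_j$-slots (by $\int\nu$) and the $z_i$-slot (by $\nu_t$) \emph{jointly}, and both shifts are tracked by the same increment $N_t\to N_t+1$ in the cluster count, which is precisely what the summation-by-parts argument exploits.

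Two concrete obstructions to the route you propose. First, the claimed estimate $\int|\partial_{z_i}p_t^i(x_i,z_i)|\,dz_i\leq c/\sqrt{t(t+x_i)}$ is false uniformly in $x_i$ under (\ref{equ:eight}): at $x_i=0$ the law of $x_t^{(i)}$ is a gamma law with shape $b_i^0/\gamma_i^0$, so the density behaves like $z^{b_i^0/\gamma_i^0-1}$ near $0$ and its $z$-derivative is not in $L^1$ whenever $b_i^0<\gamma_i^0$ (which (\ref{equ:eight}) certainly permits) --- the left side is $+\infty$ while the right side is $c/t$. The second-order analogue you need for (\ref{equ:seventyone}) fails for the even larger range $b_i^0<2\gamma_i^0$, and when $b_i^0=0$ (possible for $i\in N_2$) the transition kernel has an atom at $0$ and no density at all, so the estimate is not even well-posed. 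Second, even where the marginal density is smooth the estimate is not the right object when $i\in N_C$: the integrand depends on the path of $x^{(i)}$ through \emph{both} $x_t^{(i)}$ and $\int_0^t x_s^{(i)}\,ds$ (the latter entering the $y_j$-slots for $j\in\bar R_i$), so replacing the shift of the terminal value $x_t^{(i)}$ by a shift of the marginal-density variable is not a legitimate change of variables. What you would actually need is an $L^1$ bound on the $z$-partial of the \emph{joint} density of $(\int_0^t x_s^{(i)}\,ds,\,x_t^{(i)})$, which you neither state nor prove, and the same coupling contaminates the mixed terms in your four-way decomposition of $\Delta G^{+i,+i}$ for (\ref{equ:seventyone}). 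Your own fallback --- ``combine Lemma \ref{equ:lemma_10} with Proposition \ref{equ:prop_9} to quantify the $x_i$-sensitivity of the surviving cluster count'' --- is, once unwound, exactly the Poisson summation-by-parts argument of the paper, so the density-estimate shortcut saves nothing; you should drop it and argue directly with the cluster count.
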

%
% ------------------------------------------------------------
%
\begin{proof}
The outline of the proof is the same as for \cite{r6}, Proposition 17. As in the proof of Proposition \ref{equ:proposition_14} we assume w.l.o.g. that $f$ is bounded and continuous.
%
% additional part starts

From Proposition \ref{equ:proposition_14} we have for $k \in N_{C2}$
\begin{align*} 
  (P_t f)_k (x) =& \; E^{N_{C2}} \!\left[ \int \Delta G^{+k}_{t,x^{N_R}} \!\left( x^{N_{C2}},\nu \right) d\NN_0(\nu) \right] \\
  =& \; E^{N_{C2}} \biggl[ \int \biggl\{ G_{t,x^{N_R}}^{+k} \!\left( x^{N_{C2}};\int_0^t \nu_s ds,0 \right) \\
  & \qquad \qquad \qquad - G_{t,x^{N_R}}^{+k} \!\left( x^{N_{C2}};0,0 \right) \biggr\} \1_{\{\nu_t=0\}} d\NN_0(\nu) \biggr] \\
  & \; + \, E^{N_{C2}} \biggl[ \int \biggl\{ G_{t,x^{N_R}}^{+k} \!\left( x^{N_{C2}};\int_0^t \nu_s ds,\nu_t \right) \\
  & \qquad \qquad \qquad \quad - G_{t,x^{N_R}}^{+k}\!\left( x^{N_{C2}};0,0 \right) \biggr\} \1_{\{\nu_t>0\}} d\NN_0(\nu) \bigg] \\
  \equiv& \; E_1 + E_2.
\end{align*}
By Lemmas \ref{equ:lemma_11} and \ref{equ:lemma_7}, the triangle inequality and (\ref{equ:twentysix})
\begin{align} 
  |E_1| &\leq c \norm{f} \sum \limits_{j: j \in \bar{R}_k} \int \int_0^t \nu_s ds d\NN_0(\nu) E^{N_{C2}} \!\left[ \left( I_t^{(j)} \right)^{-1} \right] \label{equ:seventyfour} \\
  &\leq c \norm{f} t t^{-1} \sum \limits_{j: j \in \bar{R}_k} \min \limits_{i \in C_j} \!\left\{ (t+x_i)^{-1} \right\} \nonumber \\
  &\leq c \norm{f} (t+x_k)^{-1}. \nonumber
\end{align}

Next we shall use the decomposition of Lemma \ref{equ:lemma_10} with $\rho=0$.
%
% additional part ends
%
% ------------------------------------------------------------
%
\begin{nota}
We have $\NN_0[\cdot \cap \{\nu_t>0\}] = (\gamma t)^{-1} P_t^*[\cdot]$ on $\{\nu_t>0\}$, where we used (\ref{equ:twentyseven}) and (\ref{equ:twentyfive}). Whenever we change integration w.r.t. $\NN_0$ to integration w.r.t. $P_t^*$ we shall denote this by $\stackrel{(*)}{=}$. 
\end{nota}
%
% ------------------------------------------------------------
%
% additional part starts

Using (\ref{equ:thirty}) and (\ref{equ:thirtytwo}) we can rewrite $E_2$ into
\begin{align*}
  E_2 &= E \biggl[ \int \biggl\{ G_{t,x^{N_R}}^{k,+k} \!\left( x^{N_{C2}}, R_{N_t} + I_2(t), S_{N_t} + X_0'(t) ;\int_0^t \nu_s ds,\nu_t \right) \\
  & \qquad \qquad \qquad - G_{t,x^{N_R}}^{k,+k} \!\left( x^{N_{C2}}, R_{N_t} + I_2(t), S_{N_t} + X_0'(t);0,0 \right) \biggr\} \1_{\{\nu_t>0\}} d\NN_0(\nu) \biggr] \\
  &\stackrel{(*)}{=} \frac{c}{t} E \!\left[ G_{t,x^{N_R}}^{k} \!\left( x^{N_{C2}}, R_{N_t+1} + I_2(t), S_{N_t+1} + X_0'(t) \right) \right. \\
  & \qquad \qquad \left. - G_{t,x^{N_R}}^{k} \!\left( x^{N_{C2}}, R_{N_t} + I_2(t), S_{N_t} + X_0'(t) \right) \right]. 
\end{align*}

Let $w = \frac{x_k}{\gamma_k^0 t}$ and recall that $\norm{G} \leq \norm{f}$. The independence of $N_t$ from $(\{ \int_0^t x_s^{(i)} ds, i \in C_j \backslash \{k\}, j \in N_R \}, x_t^{(N_{C2}) \backslash \{k\}}, I_2(t), X_0'(t),\{e_l\},\{r_l\})$ and summation by parts yields
\begin{align*}
  |E_2| &= \frac{c}{t} \biggl| \sum_{n=0}^{\infty} p_n(w) E \biggl[ G_{t,x^{N_R}}^{k} \!\left( x^{N_{C2}}, R_{n+1} + I_2(t), S_{n+1} + X_0'(t) \right) \\
  & \qquad \qquad \qquad \qquad - G_{t,x^{N_R}}^{k} \!\left( x^{N_{C2}}, R_n + I_2(t), S_n + X_0'(t) \right) \biggr] \biggr| \\
  &\leq \frac{c}{t} \norm{f} \sum \limits_{n=1}^{\infty} | p_{n-1}(w) - p_n(w) | + \frac{c}{t} e^{-w} \norm{f} \\
  &\leq \frac{c}{t} \norm{f} \sum_{n=0}^{\infty} p_n(w) \frac{|n-w|}{w} \\
  &\leq \frac{c}{t} \norm{f} w^{-1} \left( \left( E \!\left[ (N_t-w)^2 \right] \right)^{1/2} \wedge E[N_t+w] \right) \\
  &= \frac{c}{t} \norm{f} w^{-1} \left( \sqrt{w} \wedge 2w \right) \\
  &= \frac{c}{t} \norm{f} \sqrt{t} \left( \frac{1}{\sqrt{x_k}} \wedge 2 \frac{1}{\sqrt{t}} \right) \\
  &\leq \frac{c \norm{f}}{\sqrt{t} \sqrt{t+x_k}}.
\end{align*}
This and (\ref{equ:seventyfour}) give (\ref{equ:seventy}).
%
% additional part ends

Next consider second derivatives in $k$. The representation of $(P_t f)_{kk}$ in Proposition \ref{equ:proposition_14} and symmetry allow us to write for $k \in N_{C2}$ (i.e. $l=k$) 
\begin{align*}
  (P_t f)_{kk} (x) = & \ E^{N_{C2}} \!\left[ \int \! \int \! \Delta G^{+k,+k}_{t,x^{N_R}} \!\left( x^{N_{C2}},\nu,\nu' \right) \1_{\{\nu_t=0,\nu_t'=0\}} d\NN_0(\nu) d\NN_0(\nu') \right] \\
  & \!\!\! + 2 E^{N_{C2}} \biggl[ \int \! \int \! \Delta G^{+k,+k}_{t,x^{N_R}} \!\left( x^{N_{C2}},\nu,\nu' \right) \1_{\{\nu_t=0,\nu_t'>0\}} d\NN_0(\nu) d\NN_0(\nu') \biggr] \\
  & \!\!\! + E^{N_{C2}} \biggl[ \int \! \int \! \Delta G^{+k,+k}_{t,x^{N_R}} \!\left( x^{N_{C2}},\nu,\nu' \right) \1_{\{\nu_t>0,\nu_t'>0\}} d\NN_0(\nu) d\NN_0(\nu') \biggr] \\
  \equiv & \ E_1 + 2 E_2 + E_3.
\end{align*}

The idea for bounding $|E_1|, |E_2|$ and $|E_3|$ is similar to the one in \cite{r6}.  
%
% additional part starts

Use Lemma \ref{equ:lemma_11}(b) and Lemma \ref{equ:lemma_13} to show that
\begin{align*}
  |E_1| &\leq \left| E^{N_{C2}} \left[ \int \int G_{t,x^{N_R}}^{+k,+k} \left( x^{N_{C2}};\int_0^t \nu_s ds,0,\int_0^t \nu_s' ds,0 \right) \right. \right. \\
  & \qquad \qquad \qquad \; - \, G_{t,x^{N_R}}^{+k,+k} \left( x^{N_{C2}};0,0,\int_0^t \nu_s' ds,0 \right) \\
  & \qquad \qquad \qquad \; - \, G_{t,x^{N_R}}^{+k,+k} \left( x^{N_{C2}};\int_0^t \nu_s ds,0,0,0 \right) \\
  & \qquad \qquad \qquad \left. \left. \; + \, G_{t,x^{N_R}}^{+k,+k} (x^{N_{C2}};0,0,0,0) d\NN_0(\nu) d\NN_0(\nu') \right] \right| \\
  &\leq c \norm{f} E^{N_{C2}} \left[ \sum \limits_{j_1: j_1 \in \bar{R}_k} \sum \limits_{j_2: j_2 \in \bar{R}_k} \left( I_t^{(j_1)} \right)^{-1} \left( I_t^{(j_2)} \right)^{-1} \right] \\ 
  & \times \int \int_0^t \nu_s ds d\NN_0(\nu) \int \int_0^t \nu_s' ds d\NN_0(\nu') \\
  &\leq c \norm{f} \sum \limits_{j_1: j_1 \in \bar{R}_k} \sum \limits_{j_2: j_2 \in \bar{R}_k} t^{-1} \min \limits_{i \in C_{j_1}} \{ (t+x_i)^{-1} \} t^{-1} \min \limits_{i \in C_{j_2}} \{ (t+x_i)^{-1} \} t^2 \\ 
  &\leq \frac{c \norm{f}}{(t+x_k)^2}, 
\end{align*}
where in the next to last line we have used (\ref{equ:twentysix}) and Lemma \ref{equ:lemma_7}(b).

For $E_2$ we may drop $\1_{\{\nu_t=0\}}$ from the expression for $\Delta G^{+k,+k}_{t,x^{N_R}} (x^{N_{C2}},\nu,\nu')$, regroup terms, and use Lemma \ref{equ:lemma_11}(b), triangle inequality and (\ref{equ:twentyfive}) to write
\begin{align*}
  |E_2| &\leq E^{N_{C2}} \left[ \int \int \left( \left| G_{t,x^{N_R}}^{+k,+k} \left( x^{N_{C2}};\int_0^t \nu_s ds,0,\int_0^t \nu_s' ds,\nu_t' \right) \right. \right. \right. \\
  & \qquad \qquad \qquad \quad \left. \; - \, G_{t,x^{N_R}}^{+k,+k} \left( x^{N_{C2}};0,0,\int_0^t \nu_s' ds,\nu_t' \right) \right| \\
  & \qquad \qquad \quad \; + \, \left| G_{t,x^{N_R}}^{+k,+k} \left( x^{N_{C2}};\int_0^t \nu_s ds,0,0,0 \right) \right. \\
  & \qquad \qquad \qquad \left. \left. \left. \; - \, G_{t,x^{N_R}}^{+k,+k} (x^{N_{C2}};0,0,0,0) \right| \right) \1_{\{\nu_t'>0\}} d\NN_0(\nu) d\NN_0(\nu') \right] \\
  &\leq c \norm{f} E^{N_{C2}} \left[ \sum \limits_{j: j \in \bar{R}_k} \left( I_t^{(j)} \right)^{-1} \right] \int \int_0^t \nu_s ds d\NN_0(\nu) \NN_0(\nu_t'>0) \\
  &\leq c \norm{f} t^{-1} \sum \limits_{j: j \in \bar{R}_k} \min \limits_{i \in C_j} \{ (t+x_i)^{-1} \} t t^{-1} \\
  &\leq \frac{c \norm{f}}{t (t+x_k)},
\end{align*}
where in the next to last line we have again used (\ref{equ:twentysix}) and Lemma \ref{equ:lemma_7}(b).
%
% additional part ends

The decomposition of Lemma \ref{equ:lemma_10} (cf. (\ref{equ:thirty}) and (\ref{equ:thirtytwo})) with $\rho = 0$ gives
\begin{align}
  |E_3| \stackrel{(*)}{=}& \; \frac{c}{t^2} \biggl| E \biggl[ \int \int \Bigl\{ G_{t,x^{N_R}}^{k,+k,+k} \Bigl( x^{N_{C2}}, R_{N_t} + I_2(t), S_{N_t} + X_0'(t); \label{equ:seventyfive} \\
  & \qquad \qquad \qquad \qquad \qquad \qquad \qquad \qquad \int_0^t \! \nu_s ds,\nu_t,\int_0^t \! \nu_s' ds,\nu_t' \Bigr) \nonumber \\
  & \quad \!\! - \, G_{t,x^{N_R}}^{k,+k,+k} \!\left( x^{N_{C2}}, R_{N_t} + I_2(t), S_{N_t} + X_0'(t);0,0,\int_0^t \! \nu_s' ds,\nu_t' \right) \nonumber \\
  & \quad \!\! - \, G_{t,x^{N_R}}^{k,+k,+k} \!\left( x^{N_{C2}}, R_{N_t} + I_2(t), S_{N_t} + X_0'(t);\int_0^t \! \nu_s ds,\nu_t,0,0 \right) \nonumber \\
  & \quad \!\! + \, G_{t,x^{N_R}}^{k,+k,+k} \Bigl( x^{N_{C2}}, R_{N_t} + I_2(t), S_{N_t} + X_0'(t);0,0,0,0 \Bigr) \Bigr\} \nonumber \\
  & \!\! \times dP_t^*(\nu) dP_t^*(\nu') \biggr] \biggr|, \nonumber 
\end{align}
where for instance
\begin{align*}
  & G_{t,x^{N_R}}^{k,+k,+k} \Bigl( x^{N_{C2}}, R_{N_t} + I_2(t), S_{N_t} + X_0'(t); \int_0^t \! \nu_s ds,\nu_t,\int_0^t \! \nu_s' ds,\nu_t' \Bigr) \\
  & = G_{t,x^{N_R}} \Biggl( \biggl\{ \int_0^t \sum \limits_{i \in C_j \backslash \{k\} } X_s^i ds + \1_{\{k \in C_j\}} \left( R_{N_t} + I_2(t) \right) \\ 
  & \qquad \qquad \qquad \qquad \qquad \qquad \qquad + \int_0^t \1_{\{k \in C_j\}} \left( \nu_s + \nu_s' \right) ds \biggr\}_{j \in N_R} , \\
  & \qquad \qquad \quad \biggl\{ \1_{\{i \neq k\}} X_t^i + \1_{\{i=k\}} \left( S_{N_t} + X_0'(t) \right) \\
  & \qquad \qquad \qquad \qquad \qquad \qquad \qquad + \1_{\{i=k\}} \left( \nu_t + \nu_t' \right) \biggr\}_{i \in N_{C2}} \Biggr)
\end{align*}
by Notation \ref{equ:notation_changes} and the comment following it. 

Recall that $R_k = R_k(t) = \sum_{j=1}^k r_j(t)$ and $S_k = S_k(t) = \sum_{j=1}^k e_j(t)$ with $\{r_j(t)\}$ and $\{e_j(t)\}$ as in Lemma \ref{equ:lemma_10}. In particular, $\{e_j, j \in \NN\}$ is iid with common law $P_t^*$ and $r_j(t) = \int_0^t e_j(s) ds$.

We obtain (recall the definition of $G_{t,x^{N_R}}^k$ from (\ref{equ:more_defs}))
\begin{align*}
  |E_3| =& \; \frac{c}{t^2} \biggl| E \biggl[ G_{t,x^{N_R}}^k \Bigl( x^{N_{C2}}, R_{N_t+2} + I_2(t), S_{N_t+2} + X_0'(t) \Bigr) \\
  & \qquad \quad \!\! - \, 2 G_{t,x^{N_R}}^k \!\left( x^{N_{C2}}, R_{N_t+1} + I_2(t), S_{N_t+1} + X_0'(t) \right) \\
  & \qquad \quad \!\! + \, G_{t,x^{N_R}}^k \Bigl( x^{N_{C2}}, R_{N_t} + I_2(t), S_{N_t} + X_0'(t) \Bigr) \biggr] \biggr|.
\end{align*}
Observe that in case $k \in N_2$ the above notation $G_{t,x^{N_R}}^{k} ( x^{N_{C2}}, R_{N_t} + I_2(t), \\ S_{N_t} + X_0'(t))$ only indicates that $x_t^{(k)}$ gets changed into $S_{N_t} + X_0'(t)$; for $k \in N_2$ the indicated change of $\int_0^t x_s^{(k)} ds$ into $R_{N_t} + I_2(t)$ has no impact on the term under consideration.

Let $w = x_k / (\gamma_k^0 t)$. The independence of $N_t$ from $(\{ \int_0^t x_s^{(i)} ds, i \in C_j \backslash \{k\}, j \in N_R \}, x_t^{(N_{C2}) \backslash \{k\}}, I_2(t), X_0'(t),\{e_l\},\{r_l\})$ yields
\begin{align*}
  |E_3| =& \; \frac{c}{t^2} \Bigl| \sum \limits_{n=0}^{\infty} p_n(w) E \Bigl[ G_{t,x^{N_R}}^k \!\left( x^{N_{C2}}, R_{n+2} + I_2(t), S_{n+2} + X_0'(t) \right) \\
  & \qquad \qquad \qquad \quad - 2 G_{t,x^{N_R}}^k \!\left( x^{N_{C2}}, R_{n+1} + I_2(t), S_{n+1} + X_0'(t) \right) \\
  & \qquad \qquad \qquad \quad + G_{t,x^{N_R}}^k \!\left( x^{N_{C2}}, R_n + I_2(t), S_n + X_0'(t) \right) \Bigr] \Bigr|.
\end{align*}
Sum by parts twice and use $|G| \leq \norm{f}$ to bound the above by
\begin{align*}
  & c \norm{f} \frac{1}{x_k t} \left| w (3 p_0(w) + p_1(w)) + \sum \limits_{n=2}^{\infty} w \left( p_{n-2}(w) - 2p_{n-1}(w) + p_n(w) \right) \right| \\
  & \leq c \norm{f} \frac{1}{x_k t} \left( w p_0(w) + w p_1(w) + \sum_{n=2}^{\infty} p_n(w) \frac{| (w-n)^2 - n |}{w} \right) \\                           \\
  & \leq c \norm{f} \frac{1}{x_k t} \left( 2 p_1(w) + \sum_{n=0}^{\infty} p_n(w) \frac{ (w-n)^2 + n }{w} \right) \\ 
  & \leq c \norm{f} \frac{1}{x_k t}.
\end{align*}

We obtain another bound on $|E_3|$ if we use the trivial bound $|G| \leq \norm{f}$ in (\ref{equ:seventyfive}). This yields $|E_3| \leq c \norm{f} t^{-2}$ and so
\[ |E_3| \leq \frac{c \norm{f}}{t(t+x_k)}. \]

Combine the bounds on $|E_1|, |E_2|$ and $|E_3|$ to obtain (\ref{equ:seventyone}).

The bounds for $f \in \SSS^\alpha$ are obtained from the above just as in the proof of Proposition \ref{equ:proposition_16}. 
\end{proof}
%
% ------------------------------------------------------------
%

Recall Convention \ref{equ:convention_1}, as stated in (\ref{equ:fifteen}), for the definition of $M^0$ in what follows.
%
% ------------------------------------------------------------
%
\begin{nota}
Set $J_t^{(j)} = \gamma_j^0 2 I_t^{(j)}, j \in N_R$. 
\end{nota}
%
% ------------------------------------------------------------
%
\begin{lem} \label{equ:lemma_18}
For each $M \geq 1$, $\alpha \in (0,1)$ and $d \in \NN$ there is a $c=c(M,\alpha,d) > 0$ such that if $M^0 \leq M$, then
\begin{equation} \label{equ:seventysix}
  |fg|_{\alpha} \leq c \wabsv{f} \norm{g} + \norm{f} |g|_{\alpha} 
\end{equation}
and
\begin{equation} \label{equ:seventyseven}
  \normx{fg}_{\alpha} \leq c \left( \normw{f} \norm{g} + \norm{f} |g|_{\alpha}  \right). 
\end{equation}
\end{lem}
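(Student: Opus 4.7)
The plan is to decompose
\[ P_t(fg) - fg = \bigl[ P_t(fg) - f \cdot P_t g \bigr] + f \cdot (P_t g - g). \]
The second summand has sup-norm bounded by $\norm{f} |g|_{\alpha} t^{\alpha/2}$ directly from the definition of $|g|_\alpha$. For the first,
\[ P_t(fg)(x) - f(x) P_t g(x) = E^x\!\left[ (f(X_t) - f(x)) g(X_t) \right], \]
so $|P_t(fg)(x) - f(x) P_t g(x)| \leq \norm{g}\, E^x[|f(X_t) - f(x)|]$ and the problem reduces to the uniform estimate
\[ E^x[|f(X_t) - f(x)|] \leq c\, \wabsv{f}\, t^{\alpha/2} \quad \text{for every } x \in \SSS_0,\ t > 0. \]

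To prove this, I telescope $f(X_t) - f(x)$ along a path from $x$ to $X_t$ arranged in three phases, each leg changing one coordinate so that exactly one seminorm $\ainorm{f}{\cdot}$ applies: Phase 1 updates the $N_R$-coordinates one at a time, Phase 2 the $N_C$-coordinates, Phase 3 the $N_2$-coordinates. The decisive choice is how to pair each $j \in N_R$ with a catalyst $i(j) \in C_j \subseteq N_C$ for the purpose of invoking $\ainorm{f}{i(j)}$: I take $i(j) = \mathop{\mathrm{argmax}}_{i \in C_j} x_i$, the dominant catalyst of $j$ at the point $x$. Since $j \in \bar{R}_{i(j)} = R_{i(j)} \cap N_R$, the step $h_k = (X_t^{(j)} - x_j) e_j$ is admissible for $\ainorm{f}{i(j)}$, and because no $N_C$-coordinate has been modified yet in Phase 1, the weight entering the bound is exactly $x_{i(j)}$ and not $X_t^{(i(j))}$. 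In Phases 2 and 3 each leg modifies its own weight coordinate, posing no such subtlety, and all intermediate points lie in $\SSS_0$ because $X_t^{(i)} \geq 0$ for $i \in N_{C2}$.

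For each leg the seminorm inequality yields
\[ |f(y_k) - f(y_{k-1})| \leq \wabsv{f}\, \min\!\bigl( |h_k|^{\alpha/2},\, |h_k|^{\alpha} w_k^{-\alpha/2} \bigr), \]
where $w_k$ is the relevant weight. The Feller branching variance bounds, namely $E[(X_t^{(i)} - x_i)^2] \leq c(x_i t + t^2)$ for $i \in N_{C2}$ and (from Lemma \ref{equ:lemma_7}(a)) $E[(X_t^{(j)} - x_j)^2] \leq c\bigl(\sum_{i \in C_j} x_i\, t + t^2\bigr)$ for $j \in N_R$, combined with Jensen's inequality, give $E[|h_k|^{\alpha/2}] \leq c(w_k t + t^2)^{\alpha/4}$ and $E[|h_k|^{\alpha}] \leq c(w_k t + t^2)^{\alpha/2}$; in Phase 1 the reduction from $\sum_{i \in C_{j_k}} x_i$ to $w_k$ is exactly $\sum_{i \in C_{j_k}} x_i \leq |C_{j_k}|\, x_{i(j_k)} \leq d\, w_k$, which is the point at which the dominant-catalyst choice pays off. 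A case split on $w_k \leq t$ versus $w_k > t$ makes one of the two bounds at most $c\, t^{\alpha/2}$, uniformly in $x$, so $E[\min(\cdots)] \leq \min(E[\cdots], E[\cdots]) \leq c\, t^{\alpha/2}$. Summing the at most $d$ legs establishes the displayed uniform estimate and hence (\ref{equ:seventysix}); adding $\norm{fg} \leq \norm{f}\norm{g}$ yields (\ref{equ:seventyseven}).

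The main obstacle—and the reason the refined set $\bar{R}_i = R_i \cap N_R$ is essential—is that in a general network a reactant $j \in N_R$ can have several catalysts in $N_C$, so that the conditional variance of $X_t^{(j)} - x_j$ depends on $\sum_{i \in C_j} x_i$ rather than a single $x_i$. A naive choice of $i(j)$ cannot control this sum by the single weight $x_{i(j)}^{\alpha/2}$ appearing in $\ainorm{f}{i(j)}$; the dominant-catalyst choice majorises it by $d\, x_{i(j)}$ and restores the required uniformity in $x$.
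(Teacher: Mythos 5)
Your argument is correct and follows the paper's proof in all essentials: the same initial decomposition $P_t(fg) - fg = [P_t(fg) - f\cdot P_t g] + f\,(P_t g - g)$, a coordinate-by-coordinate telescope of $f(X_t) - f(x)$ (with the crucial ordering that $N_R$-coordinates are moved while the $N_{C2}$-coordinates are still at their initial values, so the weight is $x_{i(j)}$ and not the random $X_t^{(i(j))}$), the dominant-catalyst choice $i(j) = \mathop{\mathrm{argmax}}_{i \in C_j} x_i$ to reconcile the weight $x_{i(j)}^{\alpha/2}$ with the variance proportional to $\sum_{i \in C_j} x_i$, Jensen plus the moment bounds from Lemma \ref{equ:lemma_7}(a), and the case split $w_k \lessgtr t$. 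The paper phrases Step 3 as a $\min_{i:\,k\in\bar R_i}$ inside the sum (which, once the bound is evaluated, produces exactly $\bigl(\max_{i\in C_k} x_i\bigr)^{-\alpha/2}$), and writes the computation against the explicit Gaussian kernel $G_{t,x^{N_R}}$ rather than probabilistically against $X_t$; also, the paper splits your Phase 1 into two legs ($z^{N_R}\to x^{N_R}+b^0 t$ and $x^{N_R}+b^0 t\to x^{N_R}$), absorbing the drift into the $t^2$ term exactly as you do. These are presentational differences only; the mathematics is the same.
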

%
% ------------------------------------------------------------
%
\begin{proof}
Compared to the proof of \cite{r6}, Lemma 18, the derivation of a bound for the second error term $E_2$ below becomes more involved. Again the triangle-inequality has to be used to express multi-dimensional coordinate changes via one-dimensional ones. 

Let $\left( x^{N_R}, x^{N_{C2}} \right) \in \RR^{|N_R|} \times \RR_+^{|N_{C2}|}$ and define $\tilde{f}(y) = f(y)-f(x)$. Then (\ref{equ:eighteen}) gives
\begin{align}
  & |P_t(fg) (x) - (fg) (x)| \label{equ:seventyeight} \\
  & \leq |P_t(\tilde{f}g) (x)| + |f(x)| |P_tg (x) - g (x)| \nonumber \\
  & \leq \; \norm{g} E^{N_{C2}} \!\left[ \int_{\RR^{|N_R|}} \left| \tilde{f} \!\left( z^{N_R}, x_t^{N_{C2}} \right) \right| \prod \limits_{j \in N_R} p_{J_t^{(j)}} \!\left( z_j -x_j -b_j^0 t \right) dz_j \right] \nonumber \\
  & \qquad + \norm{f} |g|_{\alpha} t^{\alpha/2}. \nonumber
\end{align}

The above expectation can be bounded by three terms as follows:
\begin{align}
  & E^{N_{C2}} \!\left[ \int \left| \tilde{f} \!\left( z^{N_R}, x_t^{N_{C2}} \right) \right| \prod \limits_{j \in N_R} p_{J_t^{(j)}} \!\left( z_j -x_j -b_j^0 t \right) dz_j \right] \label{equ:seventynine} \\
  & \leq E^{N_{C2}} \biggl[ \int \left\{ \left| \tilde{f} \!\left( z^{N_R}, x_t^{N_{C2}} \right) - \tilde{f} \!\left( z^{N_R}, x^{N_{C2}} \right) \right| \right. \nonumber \\
  & \qquad \qquad \qquad \quad + \left| f \!\left( z^{N_R}, x^{N_{C2}} \right) - f \!\left( x^{N_R} + b_{N_R}^0 t, x^{N_{C2}} \right) \right| \nonumber \\
  & \left. \qquad \qquad \qquad \quad + \left| f \!\left( x^{N_R} + b_{N_R}^0 t, x^{N_{C2}} \right) - f \!\left( x^{N_R}, x^{N_{C2}} \right) \right| \right\} \nonumber \\
  & \quad \qquad \; \times \prod \limits_{j \in N_R} p_{J_t^{(j)}} \!\left( z_j -x_j -b_j^0 t \right) dz_j \biggr] \nonumber \\
  & \equiv E_1 + E_2 + E_3. \nonumber
\end{align}

For all three terms we shall use the triangle inequality to sum up changes in different coordinates separately.

The definition of $\ainorm{f}{i}$ gives
\begin{align*}
  E_1 &\leq \sum \limits_{i \in N_{C2}} \ainorm{f}{i} E^{N_{C2}} \!\left[ \left( \left| x_t^{(i)} - x_i \right|^{\alpha} x_i^{-\alpha/2} \right) \wedge \left| x_t^{(i)} - x_i \right|^{\alpha/2} \right] \\
  &\leq \sum \limits_{i \in N_{C2}} \ainorm{f}{i} \left( \left( E^{N_{C2}} \!\left[ \left| x_t^{(i)} - x_i \right|^2 \right]^{\alpha/2} x_i^{-\alpha/2} \right) \wedge E^{N_{C2}} \!\left[ \left| x_t^{(i)} - x_i \right|^2 \right]^{\alpha/4} \right).
\end{align*}
We now proceed as in the derivation of a bound on $E_1$ in the proof of Lemma 18 in \cite{r6}, using Lemma \ref{equ:lemma_7}(a) (alternatively compare with estimation of $E_2$ below). We finally obtain
\[ E_1 \leq c \sum \limits_{i \in N_{C2}} \ainorm{f}{i} t^{\alpha/2} 2^{\alpha/2} \leq c \wabsv{f} t^{\alpha/2} 2^{\alpha/2}. \]

Similarly we have
\begin{align*}
  E_2 \leq& \; \sum \limits_{k \in N_R} \min \limits_{i: k \in \bar{R}_i} \biggl\{ \ainorm{f}{i} E^{N_{C2}} \biggl[ \int \left( \left( \left| z_k - (x_k + b_k^0 t) \right|^{\alpha} x_i^{-\alpha/2} \right) \wedge \right. \\
  & \left. \; \left| z_k - (x_k + b_k^0 t) \right|^{\alpha/2} \right) \prod \limits_{j \in N_R} p_{J_t^{(j)}} \!\left( z_j -x_j -b_j^0 t \right) dz_j \biggr] \biggr\} \\
  \leq& \; c \sum \limits_{k \in N_R} \min \limits_{i: k \in \bar{R}_i} \!\left\{ \ainorm{f}{i} E^{N_{C2}} \!\left[ \left( \left| J_t^{(k)} \right|^{\alpha/2} x_i^{-\alpha/2} \right) \wedge \left| J_t^{(k)} \right|^{\alpha/4} \right] \right\} \\
  \leq& \; c \sum \limits_{k \in N_R} \min \limits_{i: k \in \bar{R}_i} \biggl\{ \ainorm{f}{i} \biggl( \biggl( E^{N_{C2}} \!\left[ \left| J_t^{(k)} \right| \right]^{\alpha/2} x_i^{-\alpha/2} \biggr) \wedge E^{N_{C2}} \!\left[ \left| J_t^{(k)} \right| \right]^{\alpha/4} \biggr) \biggr\}
\end{align*}
as $\int |z|^{\beta} p_J(z) dz \leq c J^{\beta/2}$ for $\beta \in (0,1)$. Next use Lemma \ref{equ:lemma_7}(a) which shows that $E^{N_{C2}} \!\left[ J_t^{(k)} \right] = \gamma_k^0 2 E^{N_{C2}} \!\left[ I_t^{(k)} \right] \leq \sum_{l \in C_k} c M^2 (t^2 + x_l t)$. Put this in the above bound on $E_2$ to see that $E_2$ can be bounded by
\begin{align*}
  & c \sum \limits_{k \in N_R} \! \min \limits_{i: k \in \bar{R}_i} \!\left\{ \ainorm{f}{i} \left( \left( \left( \sum \limits_{l \in C_k} (t^2 + x_l t) \right)^{\alpha/2} \!\!\! x_i^{-\alpha/2} \right) \wedge \left( \sum \limits_{l \in C_k} (t^2 + x_l t) \right)^{\alpha/4} \right) \right\} \\
  & \stackrel{k \in N_R}{\leq} c \wabsv{f} \sum \limits_{k \in N_R} \left( \left( \sum \limits_{l \in C_k} \frac{ t^2 + x_l t }{ \max \limits_{i: k \in \bar{R}_i} x_i } \right)^{\alpha/2} \wedge \left( \sum \limits_{l \in C_k} \left( t^2 + t \max \limits_{i: k \in \bar{R}_i} x_i \right) \right)^{\alpha/4} \right) \\
  & \;\; \stackrel{k \in N_R}{\leq} c \wabsv{f} t^{\alpha/2} \sum \limits_{k \in N_R} \left( \left( \frac{t}{ \max \limits_{i: k \in \bar{R}_i} x_i } + 1 \right)^{\alpha/2} \wedge \left( 1 + \frac{ \max \limits_{i: k \in \bar{R}_i} x_i }{t} \right)^{\alpha/4} \right) \\
  & \quad \leq \; c \wabsv{f} t^{\alpha/2} 2^{\alpha/2}.
\end{align*}

For the third term $E_3$ we finally have
\begin{align*}
  E_3 &\leq \sum \limits_{k \in N_R} \min \limits_{i: k \in \bar{R}_i} \!\left\{ \ainorm{f}{i} \left( \left( \left| b_k^0 t \right|^{\alpha} x_i^{-\alpha/2} \right) \wedge \left( \left| b_k^0 t \right|^{\alpha/2} \right) \right) \right\} \\
  &\leq c \wabsv{f} \sum \limits_{k \in N_R} \left| b_k^0 t \right|^{\alpha/2} \\
  &\leq c \wabsv{f} t^{\alpha/2}.
\end{align*}

Put the above bounds on $E_1, E_2$ and $E_3$ into (\ref{equ:seventynine}) and then in (\ref{equ:seventyeight}) to conclude that
\[ | P_t (fg) (x) - (fg) (x) | \leq \left( \norm{g} c \wabsv{f} + \norm{f} |g|_{\alpha} \right) t^{\alpha/2} \]
and so by definition of the semigroup norm
\[ |fg|_{\alpha} \leq c \wabsv{f} \norm{g} + \norm{f} |g|_{\alpha}. \] 
This gives (\ref{equ:seventysix}) and (\ref{equ:seventyseven}) is then immediate. 
\end{proof}
%
% ------------------------------------------------------------
%
\begin{thm} \label{equ:theorem_19}
There exist $0 < c_1 \leq c_2$ such that
\begin{equation} \label{equ:eighty}
  c_1 \wabsv{f} \leq |f|_{\alpha} \leq c_2 \wabsv{f}.
\end{equation}
This implies that $\SC_w^{\alpha} = \SSS^{\alpha}$ and so $\SSS^{\alpha}$ contains $\SC^1$ functions with compact support in $\SSS_0$.
\end{thm}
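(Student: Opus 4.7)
The upper inequality $|f|_{\alpha} \leq c_2 \wabsv{f}$ is essentially a corollary of Lemma \ref{equ:lemma_18}. Apply (\ref{equ:seventysix}) with $g \equiv 1$, for which $|g|_{\alpha} = 0$ and $\norm{g} = 1$; this yields $|f|_{\alpha} = |f \cdot 1|_{\alpha} \leq c \wabsv{f}$. (Alternatively one can read the bound off directly from the $E_1, E_2, E_3$ estimates in the proof of Lemma \ref{equ:lemma_18}, ignoring the factor $g$.) This is the easy half and requires no further work.

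The lower inequality $c_1 \wabsv{f} \leq |f|_{\alpha}$ is the substantive direction. Fix $i \in I = N_{C2}$ and an admissible increment $h$ as in the definition of $\ainorm{f}{i}$ (so $h_k = 0$ for $k \notin \{i\} \cup \bar{R}_i$ if $i \in N_C$, or $h = h_j e_j$ with $h_j > 0$ if $i = j \in N_2$; in particular $h_i \geq 0$). Use the standard decomposition
\begin{equation*}
f(x+h) - f(x) = \bigl[P_t f(x+h) - P_t f(x)\bigr] - \bigl[(P_t f - f)(x+h)\bigr] + \bigl[(P_t f - f)(x)\bigr],
\end{equation*}
so that the last two summands contribute at most $2 |f|_{\alpha} t^{\alpha/2}$. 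For the first summand, write it as $\int_0^1 \nabla P_t f(x+sh) \cdot h\, ds$ and bound each directional derivative using the $\SSS^{\alpha}$ bounds of Propositions \ref{equ:proposition_16} and \ref{equ:proposition_17}. The key observation is that for $k = i$ (if $i \in N_C$ or $N_2$) the bound is $\frac{c |f|_{\alpha} t^{\alpha/2 - 1/2}}{\sqrt{t + y_i}}$, and for $k = j \in \bar{R}_i$ with $i \in N_C$ one has $i \in C_j$ so Proposition \ref{equ:proposition_16} gives the bound $\frac{c |f|_{\alpha} t^{\alpha/2-1/2}}{\max_{l \in C_j}\sqrt{t+y_l}} \leq \frac{c |f|_{\alpha} t^{\alpha/2-1/2}}{\sqrt{t+y_i}}$. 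Since $h_i \geq 0$ along the segment, $y_i \geq x_i$, and all directions feed into the same estimate
\begin{equation*}
|P_t f(x+h) - P_t f(x)| \leq \frac{c |h| |f|_{\alpha} t^{\alpha/2 - 1/2}}{\sqrt{t + x_i}}.
\end{equation*}

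Now optimize $t$ in two cases. If $|h| \leq x_i$, choose $t = |h|^2/x_i$; then $\sqrt{t+x_i} \asymp \sqrt{x_i}$, the two contributions balance, and we obtain $|f(x+h) - f(x)| \leq c |f|_{\alpha} |h|^{\alpha} x_i^{-\alpha/2}$. If $|h| \geq x_i$ (including the degenerate case $x_i = 0$), use $\sqrt{t+x_i} \geq \sqrt{t}$, choose $t = |h|$, and obtain $|f(x+h) - f(x)| \leq c |f|_{\alpha} |h|^{\alpha/2}$; in this regime $|h|^{\alpha} x_i^{-\alpha/2} \geq c |h|^{\alpha/2}$, so the minimum of the two weights coincides with $|h|^{\alpha/2}$. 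In both cases $|f(x+h) - f(x)| \leq c |f|_{\alpha} \bigl(|h|^{\alpha} x_i^{-\alpha/2} \wedge |h|^{\alpha/2}\bigr)$, which is $\ainorm{f}{i} \leq c |f|_{\alpha}$. Taking the max over $i \in I$ gives $\wabsv{f} \leq c |f|_{\alpha}$.

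Once (\ref{equ:eighty}) is established, $\SC_w^{\alpha} = \SSS^{\alpha}$ follows from the definitions, as both norms then control the bounded plus seminorm part and differ only by constants. Finally, any $\SC^1$ function with compact support in $\SSS_0$ is Lipschitz, so $|f(x+h) - f(x)| \leq L |h|$ with $|h|$ and $x_i$ bounded on the support, which trivially bounds $\ainorm{f}{i}$ for every $i \in I$. The main obstacle is verifying that the gradient bound from Propositions \ref{equ:proposition_16} and \ref{equ:proposition_17} integrates cleanly along the segment from $x$ to $x+h$ in all admissible directions simultaneously; this hinges precisely on the sign constraint $h_i \geq 0$ enforced by $h \in \SSS_0$ and on the inclusion $i \in C_j$ for $j \in \bar{R}_i$, which is why the extension from $R_i$ to $\bar{R}_i$ in the definition of $\ainorm{f}{i}$ was needed.
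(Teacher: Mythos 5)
Your proposal is correct and follows essentially the same route as the paper's proof: the upper bound by setting $g\equiv 1$ in Lemma \ref{equ:lemma_18}, and the lower bound by writing $f(x+h)-f(x)$ as $\bigl[P_tf(x+h)-P_tf(x)\bigr]$ plus two $O(|f|_\alpha t^{\alpha/2})$ corrections, bounding the first term via the gradient estimates of Propositions \ref{equ:proposition_16} and \ref{equ:proposition_17} (with the same crucial use of $h_i\geq 0$ and $i\in C_j$ for $j\in\bar R_i$ to reduce everything to a single factor $(t+x_i)^{-1/2}$), and then optimizing $t=|h|^2/x_i$ and $t=|h|$. The only cosmetic difference is that you integrate $\nabla P_t f$ along the straight segment from $x$ to $x+h$, whereas the paper phrases the same step as a triangle inequality over one-coordinate increments; the two are equivalent here because the segment stays in $\SSS_0$ and the derivative bounds are monotone in the coordinates being increased.
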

%
% ------------------------------------------------------------
%
\begin{proof}
The idea of the proof was taken from the proof of Theorem 19 in \cite{r6}. The second inequality in (\ref{equ:eighty}) follows immediately by setting $g=1$ in Lemma \ref{equ:lemma_18}. For the first inequality let $x, h \in \SSS_0$, $t>0$ and use Propositions \ref{equ:proposition_16} and \ref{equ:proposition_17} to see that
\begin{align}
  & |f(x+h) - f(x)| \label{equ:eightyone} \\
  & \leq | P_t f (x+h) - f(x+h) | + | P_t f (x) - f(x) | + | P_t f (x+h) - P_t f (x) | \nonumber \\
  & \leq 2 |f|_{\alpha} t^{\alpha/2} + | P_t f (x+h) - P_t f (x) | \nonumber \\
  & \leq 2 |f|_{\alpha} t^{\alpha/2} + c |f|_{\alpha} t^{\frac{\alpha}{2}-\frac{1}{2}} \left( \sum \limits_{j \in N_R} \frac{|h_j|}{ \max \limits_{l \in C_j} \{ \sqrt{t+x_l} \} } + \sum \limits_{i \in N_{C2}} \frac{h_i}{ \sqrt{t+x_i} } \right), \nonumber
\end{align}
where we used the triangle inequality together with $h_l \geq 0, l \in C_j \subset N_{C2}$ for all $j \in N_R$. 

By setting $t=|h|$ and bounding $\left( \max_{l \in C_j} \{ \sqrt{t+x_l} \} \right)^{-1}$ and $\left( \sqrt{t+x_i} \right)^{-1}$ by $\left( \sqrt{t} \right)^{-1}$ we obtain as a first bound on (\ref{equ:eightyone})
\begin{equation} \label{equ:eightytwo}
  c |f|_{\alpha} |h|^{\alpha/2}.
\end{equation}
Next only consider $h \in \SSS_0$ such that there exists $i \in N_{C2}$ and $j \in \{i\} \cup \bar{R}_i$ such that $h_j \neq 0$ and $h_k=0$ if $k \notin \{i\} \cup \bar{R}_i$. (\ref{equ:eightyone}) becomes
\begin{align*}
  & |f(x+h) - f(x)| \\
  & \leq 2 |f|_{\alpha} t^{\alpha/2} + c |f|_{\alpha} t^{\frac{\alpha}{2}-\frac{1}{2}} \left( \sum \limits_{j: j \in \bar{R}_i} \frac{|h_j|}{ \max \limits_{l \in C_j} \{ \sqrt{t+x_l} \} } + \frac{h_i}{ \sqrt{t+x_i} } \right) \\
  & \leq 2 |f|_{\alpha} t^{\alpha/2} + c |f|_{\alpha} t^{\frac{\alpha}{2}-\frac{1}{2}} \frac{1}{ \sqrt{t+x_i} } |h|.
\end{align*}
In case $x_i>0$ set $t=\frac{|h|^2}{x_i}$ and bound $\left( \sqrt{t+x_i} \right)^{-1}$ by $\left( \sqrt{x_i} \right)^{-1}$ to get as a second upper bound
\begin{equation} \label{equ:eightythree}
  c |f|_{\alpha} x_i^{-\alpha/2} |h|^{\alpha}.
\end{equation}
The first inequality in (\ref{equ:eighty}) is now immediate from (\ref{equ:eightytwo}) and (\ref{equ:eightythree}) and the proof is complete. 
\end{proof}
%
% ------------------------------------------------------------
%
\begin{note}
Special care was needed when choosing $h \in \SSS_0$ in the last part of the proof as it only works for those $h$ which are to be considered in the definition of $\wabsv{\cdot}$. Note that this was the main reason to define the weighted H\"older norms for $\bar{R}_i$ instead of $R_i$.
\end{note}
%
% ------------------------------------------------------------
%
\begin{rmk} \label{equ:rmk_norms}
The equivalence of the two norms will prove to be crucial later in Section \ref{equ:section_3}, where we show the uniqueness of solutions to the martingale problem {\it MP}($\SA$,$\nu$) as stated in Theorem \ref{equ:theorem_4}. All the estimates of Section \ref{equ:section_2} are obtained in terms of the semigroup norm. In Section \ref{equ:section_3} we shall further need estimates on the norm of products of certain functions. At this point we shall have to rely on the result of Lemma \ref{equ:lemma_18} for weighted H\"older norms. The equivalence of norms now yields a similar result in terms of the semigroup norm.
\end{rmk}
%
% ============================================================
%
\subsection{Weighted H\"older bounds of certain differentiation operators applied to $P_t f$} \label{equ:section_2_4}
%
% ============================================================

The $x_j, j \in N_R$ derivatives are much easier. 
%
% ------------------------------------------------------------
%
\begin{nota}
We shall need the following slight extension of our notation for $E^{N_{C2}}$: 
\[ E^{N_{C2}} = E_{x^{N_{C2}}}^{N_{C2}} = \left( \prodm \limits_{i \in N_{C2}} P_{x_i}^i \right). \]
\end{nota}
%
% ------------------------------------------------------------
%
\begin{nota} \label{equ:notation_T_k}
To ease notation let
\[ T_k^{-\frac{1}{2}} \!\left( t, x^{N_{C2}} \right) \equiv   
  \begin{cases}
    \min \limits_{l \in C_k} \!\left\{ (t+x_l)^{-1/2} \right\}, & k \in N_R, \\
    (t+x_k)^{-1/2}, & k \in N_{C2}. \\
  \end{cases} \]
\end{nota}
%
% ------------------------------------------------------------
%
\begin{pro} \label{equ:proposition_22}
If $f$ is a bounded Borel function on $\SSS_0$, then for all $x, h \in \SSS_0$, $j \in N_R$, $i \in C_j$ and arbitrary $k \in V$,
\begin{equation} \label{equ:eightyseven}
  \left| \dx{j}P_tf (x+h_k e_k) - \dx{j}P_tf (x) \right| \leq \frac{c \norm{f}}{t^{3/2}} |h_k| T_k^{-\frac{1}{2}} \!\left( t, x^{N_{C2}} \right)
\end{equation}
and
\begin{equation} \label{equ:eightyeight}
  \left| (x+h_k e_k)_i \dfdx{P_tf}{j} (x+h_k e_k) - x_i \dfdx{P_tf}{j} (x) \right| \leq \frac{c \norm{f}}{t^{3/2}} |h_k| T_k^{-\frac{1}{2}} \!\left( t, x^{N_{C2}} \right). 
\end{equation}
If $f \in \SSS^{\alpha}$, then
\begin{equation} \label{equ:eightynine}
  \left| \dx{j}P_tf (x+h_k e_k) - \dx{j}P_tf (x) \right| \leq c |f|_{\alpha} t^{\frac{\alpha}{2}-\frac{3}{2}} |h_k| T_k^{-\frac{1}{2}} \!\left( t, x^{N_{C2}} \right) 
\end{equation}
and
\begin{equation} \label{equ:ninety}
  \left| (x+h_k e_k)_i \dfdx{P_tf}{j} (x+h_k e_k) - x_i \dfdx{P_tf}{j} (x) \right| \leq c |f|_{\alpha} t^{\frac{\alpha}{2}-\frac{3}{2}} |h_k| T_k^{-\frac{1}{2}} \!\left( t, x^{N_{C2}} \right). 
\end{equation}
\end{pro}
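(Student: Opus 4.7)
The plan is to prove the bounded-Borel estimates (\ref{equ:eightyseven}) and (\ref{equ:eightyeight}) first, and then bootstrap them to the $\mathcal{S}^\alpha$ estimates (\ref{equ:eightynine}) and (\ref{equ:ninety}) by exactly the dyadic telescoping argument used in the proofs of Propositions \ref{equ:proposition_16} and \ref{equ:proposition_17}. The recurring tool is that for $j \in N_R$ differentiation commutes with the semigroup, i.e.\ $\partial_j^m P_t f = P_s\,\partial_j^m P_{t-s}f$ for all $0\leq s\leq t$ and $m \in \NN$, which follows directly from (\ref{equ:eighteen}) upon integrating by parts in $u_j$ against the Gaussian kernel (for smooth $P_{t-s}f$, justified by Proposition \ref{equ:proposition_14}).

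For (\ref{equ:eightyseven}) I split by $k$. If $k\in N_R$, the shift $h_ke_k$ only alters the mean of the Gaussian factor in $G_{t,x^{N_R}}$, so by the one-dimensional mean value theorem in $x_k$, together with the mixed-derivative bound $|\partial^2 G/\partial x_j\partial x_k|\leq c\|f\|_\infty y_j^{-1/2}y_k^{-1/2}$ from Lemma \ref{equ:lemma_11}(b), Cauchy-Schwarz, and Lemma \ref{equ:lemma_7}(b) with $p=1$, the increment is bounded by
\[
c\|f\|_\infty |h_k|\,t^{-1}\min_{l\in C_j}(t+x_l)^{-1/2}\min_{l\in C_k}(t+x_l)^{-1/2}\leq c\|f\|_\infty|h_k|\,t^{-3/2}T_k^{-1/2}(t,x^{N_{C2}})
\]
(bounding the first $\min$ by $t^{-1/2}$). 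If $k\in N_{C2}$, I set $g=\partial_j P_{t/2}f$, so that $\partial_j P_tf=P_{t/2}g$ by the commutation above, and note $\|g\|_\infty\leq c\|f\|_\infty/t$ by (\ref{equ:sixtyfour}). Then the fundamental theorem of calculus combined with (\ref{equ:seventy}) applied to $P_{t/2}g$ gives
\[
|P_{t/2}g(x+h_ke_k)-P_{t/2}g(x)|\leq |h_k|\sup_{0\leq s\leq h_k}|\partial_k P_{t/2}g(x+se_k)|\leq c\|g\|_\infty|h_k|\,t^{-1/2}(t+x_k)^{-1/2},
\]
which delivers the target (the monotonicity $s\geq 0$ keeps $(t+x_k+s)^{-1/2}\leq (t+x_k)^{-1/2}$).

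For (\ref{equ:eightyeight}) I reuse the case split but must keep track of the weight $x_i$ (with $i\in C_j\subseteq N_C$). When $k\in N_R$ (so $k\neq i$), the argument is identical except one invokes $|\partial_j^2\partial_k G|\leq c\|f\|_\infty y_j^{-1}y_k^{-1/2}$ from Lemma \ref{equ:lemma_11}(b) and Cauchy-Schwarz with $p=2$ and $p=1$; the resulting weight $x_i\min_{l\in C_j}(t+x_l)^{-1}$ is bounded by $1$ because $i\in C_j$. When $k\in N_{C2}$ with $k\neq i$, the factor $x_i$ pulls out of the increment and I repeat the semigroup argument, now using that $\|y_i \partial_j^2 P_{t/2}f(y)\|_\infty\leq c\|f\|_\infty/t$ from (\ref{equ:sixtyfive}). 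When $k=i$, I use the product-rule decomposition
\[
F(x+h_ie_i)-F(x) = (x_i+h_i)\bigl[\partial_j^2 P_tf(x+h_ie_i)-\partial_j^2 P_tf(x)\bigr] + h_i\,\partial_j^2 P_tf(x),
\]
bounding the stand-alone second term via the combined pointwise estimate $|\partial_j^2P_tf(x)|\leq c\|f\|_\infty/(t(t+x_i))$ (obtained by taking the minimum of the $t^{-2}$ bound of Proposition \ref{equ:proposition_14} and the $1/(tx_i)$ bound of (\ref{equ:sixtyfive})), which yields $h_i|\partial_j^2P_tf(x)|\leq c\|f\|_\infty h_i\,t^{-3/2}(t+x_i)^{-1/2}$; the first term is handled by the semigroup approach, balancing the $(x_i+h_i)$ weight against the sharper $(t+x_i)^{-1/2}$ factor from (\ref{equ:seventy}).

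Finally, (\ref{equ:eightynine}) and (\ref{equ:ninety}) follow from (\ref{equ:eightyseven}) and (\ref{equ:eightyeight}) by telescoping exactly as in Proposition \ref{equ:proposition_16}: apply the bounded-Borel versions to $P_{2^nt}f-f$, whose sup norm is at most $|f|_\alpha(2^nt)^{\alpha/2}$, sum the resulting telescope in which each dyadic difference equals $\partial_j^m P_{2^nt}(P_{2^nt}f-f)$ (the tail vanishes by (\ref{equ:sixtyfour})--(\ref{equ:sixtyfive})), and note that $T_k^{-1/2}(2^nt,x^{N_{C2}})\leq T_k^{-1/2}(t,x^{N_{C2}})$ so the geometric series $\sum_{n\geq 0}2^{n(\alpha/2-3/2)}$ converges. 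The main obstacle is the $k\in N_{C2}$ subcase of (\ref{equ:eightyeight}), especially the self-weighted case $k=i$: producing both the $t^{-3/2}$ rate and the $(t+x_i)^{-1/2}$ refinement simultaneously requires carefully interleaving the product-rule decomposition with the weighted second-derivative bound (\ref{equ:sixtyfive}), since any attempt to pull $x_i$ naively outside of the semigroup averaging loses the sharp dependence on $t$.
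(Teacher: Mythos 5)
The proposal is clean and correct for \eqref{equ:eightyseven}: the $k\in N_R$ case matches the paper's calculus (Lemma~\ref{equ:lemma_11}(b), Cauchy--Schwarz, Lemma~\ref{equ:lemma_7}(b)), and for $k\in N_{C2}$ the semigroup-commutation trick ($\partial_j P_t f = P_{t/2}\partial_j P_{t/2}f$, FTC, Proposition~\ref{equ:proposition_17}) gives a short alternative to the paper's direct estimation. The $k\in N_R$ case of \eqref{equ:eightyeight} and the dyadic telescoping yielding \eqref{equ:eightynine}--\eqref{equ:ninety} are also fine.

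However, the $k\in N_{C2}$ case of \eqref{equ:eightyeight} --- which is exactly the case the paper devotes almost the entire proof to --- has a genuine gap. You acknowledge the obstacle ("any attempt to pull $x_i$ naively outside of the semigroup averaging loses the sharp dependence on $t$") but the "careful interleaving" you gesture at is not actually supplied, and the mechanism you describe does not work. The point is that $x_i\partial_j^2$ does \emph{not} commute with $P_t$. Writing $g=\partial_j^2 P_{t/2}f$, the semigroup identity gives $x_i\partial_j^2 P_tf(x) = x_iP_{t/2}g(x)$, which is \emph{not} $P_{t/2}\bigl(y_i g(y)\bigr)(x)$: the weight $x_i$ is frozen at the starting point while $P_{t/2}$ averages over the $i$-th coordinate. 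So \eqref{equ:sixtyfive}, which gives $\norm{y_i g(y)}\leq c\norm{f}/t$, tells you nothing about $x_i\sup_y|\partial_k P_{t/2}g(y)|$. Forced to use the crude $\norm{g}\leq c\norm{f}/t^2$ from Proposition~\ref{equ:proposition_14}, the FTC argument produces $x_i|h_k|\,c\norm{f}\,t^{-5/2}(t+x_k)^{-1/2}$, which exceeds the target by a factor $x_i/t$ that is unbounded. The product-rule split you propose for $k=i$ has the same defect in the term $(x_i+h_i)\bigl[\partial_j^2 P_tf(x+h_ie_i)-\partial_j^2 P_tf(x)\bigr]$. The paper sidesteps this by avoiding the semigroup-commutation shortcut altogether and instead working with the explicit representation \eqref{equ:twenty} and the Feller cluster decomposition of Lemma~\ref{equ:lemma_10} ($\rho=\tfrac12$): after splitting into $E_1,E_{2a},E_{2b}$, the increment of the underlying process is expressed in terms of $I_3^h(t)$ and of the Poisson count change $N_t\to N_t'$, and the weight $x_i$ is then cancelled against a factor $(t+x_i)^{-1}$ coming from Lemma~\ref{equ:lemma_7}(b) applied to the $\int_0^t x_s^{(i)}ds$ inside $I_t^{(j)}$ --- a cancellation that exploits $i\in C_j$ and the product structure of the measure, not any commutation of $x_i\partial_j^2$ with $P_t$. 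Some version of this representation-level argument appears unavoidable here.
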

%
% ------------------------------------------------------------
%
\begin{proof}
The focus will be on proving (\ref{equ:eightyeight}) as (\ref{equ:eightyseven}) is simpler. Again, it suffices to consider $f$ bounded and continuous. For increments in $x_k$, $k \in N_R$ the statement follows as in the proof of \cite{r6}, Proposition 22. 
%
% additional part starts

Indeed, from (\ref{equ:sixtyone}) and Lemma \ref{equ:lemma_11}(b) we have
\begin{align*}
  & \left| x_i \dfdx{P_tf}{j} (x+h_k e_k) - x_i \dfdx{P_tf}{j} (x) \right| \\
  & = x_i \left| E^{N_{C2}} \left[ \frac{\partial^2}{\partial x_j^2} G_{t,x^{N_R}+h_k e_k} \left( I_t^{N_R} , x_t^{N_{C2}} \right) - \frac{\partial^2}{\partial x_j^2} G_{t,x^{N_R}} \left( I_t^{N_R} , x_t^{N_{C2}} \right) \right] \right| \\
  & \leq x_i c \norm{f} E^{N_{C2}} \left[ \left( I_t^{(j)} \right)^{-1} \left( I_t^{(k)} \right)^{-1/2} \right] |h_k| \\
  & \leq x_i c \norm{f} \sqrt{ E^{N_{C2}} \left[ \left( I_t^{(j)} \right)^{-2} \right] } \sqrt{ E^{N_{C2}} \left[ \left( I_t^{(k)} \right)^{-1} \right] } |h_k| \\
  & \stackrel{j, k \in N_R}{\leq} x_i c \norm{f} t^{-1} \min \limits_{l \in C_j} \{ (t+x_l)^{-1} \} t^{-1/2} \min \limits_{l \in C_k} \{ (t+x_l)^{-1/2} \} |h_k| \\
  & \hspace{80mm} \mbox{ by Lemma \ref{equ:lemma_7}(b) } \\
  & \leq c \norm{f} t^{-3/2} |h_k| \min \limits_{l \in C_k} \{ (t+x_l)^{-1/2} \} \mbox{ as } i \in C_j.
\end{align*}
Note that at this point (and later), $i \in C_j$ is essential to make the proof work. 
%
% additional part ends

Now consider increments in $x_k$, $k \in N_{C2}$. We start with observing that for $h_k \geq 0$
\begin{align*}
  & (x_i + \delta_{ki} h_i) \dfdx{P_tf}{j} (x+h_k e_k) - x_i \dfdx{P_tf}{j} (x) \\
  & = \delta_{ki} h_i E_{x^{N_{C2}}+h_k e_k}^{N_{C2}} \!\left[ \frac{\partial^2}{\partial x_j^2} G_{t,x^{N_R}} \!\left( I_t^{N_R} , x_t^{N_{C2}} \right) \right] \\
  & \qquad + x_i \left( E_{x^{N_{C2}}+h_k e_k}^{N_{C2}} - E_{x^{N_{C2}}}^{N_{C2}} \right) \!\left[ \frac{\partial^2}{\partial x_j^2} G_{t,x^{N_R}} \!\left( I_t^{N_R} , x_t^{N_{C2}} \right) \right] \\
  & \equiv E_1 + E_2,
\end{align*}
by arguing as in the proof of \cite{r6}, Proposition 22. The bound on $E_1$ is derived as in that proof, using Lemmas \ref{equ:lemma_11}(a) and \ref{equ:lemma_7}(b), namely
%
% additional part starts
%
\begin{align*}
  |E_1| &\leq \delta_{ki} h_i c \norm{f} E_{x^{N_{C2}}+h_k e_k}^{N_{C2}} \left[ \left( I_t^{(j)} \right)^{-1} \right] \\
  &\stackrel{j \in N_R}{\leq} \delta_{ki} h_i c \norm{f} t^{-1} \min \limits_{l \in C_j} \{ (t+x_l)^{-1} \} \\
  &\leq \delta_{ki} h_k c \norm{f} t^{-1} (t+x_k)^{-1} \\
  &\leq \delta_{ki} h_k c \norm{f} t^{-3/2} (t+x_k)^{-1/2},
\end{align*}
where we used $i \in C_j$.
%
% additional part ends

For $E_2$ we use the decompositions (\ref{equ:thirtythree}), (\ref{equ:thirtyfour}), (\ref{equ:thirtyfive}) and notation from Lemma \ref{equ:lemma_10} with $\rho=\frac{1}{2}$. Recall the notation $G_{t,x^{N_R}}^k$ from (\ref{equ:more_defs}) and the definition of $R_k$ and $S_k$ as in Notation \ref{equ:nota_R_S}. Then 
\begin{align*}
  |E_2| =& \; x_i \left| E \!\left[ \frac{\partial^2}{\partial x_j^2} G_{t,x^{N_R}}^k \!\left( x^{N_{C2}}, R_{N_t'} + I_2(t) + I_3^h(t), S_{N_t'} + X_0'(t) \right) \right. \right. \\
  & \qquad \quad \left. \left. - \frac{\partial^2}{\partial x_j^2} G_{t,x^{N_R}}^k \!\left( x^{N_{C2}}, R_{N_t} + I_2(t), S_{N_t} + X_0'(t) \right) \right] \right| \\
  \leq& \; x_i \left| E \!\left[ \frac{\partial^2}{\partial x_j^2} G_{t,x^{N_R}}^k \!\left( x^{N_{C2}}, R_{N_t'} + I_2(t) + I_3^h(t), S_{N_t'} + X_0'(t) \right) \right. \right. \\
  & \qquad \quad \left. \left. - \frac{\partial^2}{\partial x_j^2} G_{t,x^{N_R}}^k \!\left( x^{N_{C2}}, R_{N_t'} + I_2(t), S_{N_t'} + X_0'(t) \right) \right] \right| \\
  & + x_i \left| E \!\left[ \frac{\partial^2}{\partial x_j^2} G_{t,x^{N_R}}^k \!\left( x^{N_{C2}}, R_{N_t'} + I_2(t), S_{N_t'} + X_0'(t) \right) \right. \right. \\
  & \qquad \qquad \left. \left. - \frac{\partial^2}{\partial x_j^2} G_{t,x^{N_R}}^k \!\left( x^{N_{C2}}, R_{N_t} + I_2(t), S_{N_t} + X_0'(t) \right) \right] \right| \\
  \equiv& \; E_{2a} + E_{2b}.
\end{align*}

$E_{2a}$ can be bounded as in \cite{r6}, using Lemmas \ref{equ:lemma_11}(b) and \ref{equ:lemma_7}(b), and the independence of $x^{N_{C2}}$ and $I_3^h(t)$. Indeed,
%
% additional part starts
%
\begin{align*}
  E_{2a} &\leq x_i c \norm{f} E^{N_{C2}} \left[ \left( I_t^{(j)} \right)^{-1} \sum \limits_{l: l \in \bar{R}_k} \left( I_t^{(l)} \right)^{-1} \right] E\!\left[ I_3^h(t) \right] \\
  &\leq x_i c \norm{f} t^{-1} \min \limits_{m \in C_j} \{ (t+x_m)^{-1} \} \sum \limits_{l: l \in \bar{R}_k} t^{-1} \min \limits_{m \in C_l} \{ (t+x_m)^{-1} \} \\
  & \; \times \, h_k \int \int_0^t \nu_s ds d\NN_0(\nu) \\
  &\leq x_i c \norm{f} t^{-1} (t+x_i)^{-1} t^{-1} (t+x_k)^{-1} h_k t \\
  &\leq c \norm{f} t^{-3/2} h_k (t+x_k)^{-1/2}, 
\end{align*}
where (\ref{equ:twentysix}) is used in the next to last line as well as $i \in C_j$. Also note that for $k \in N_2$, $E_{2a}=0$ by using the definition of $G_{t,x^{N_R}}^k$ and arguing similarly to (\ref{equ:fourtythree}) and (\ref{equ:fourtyfour}).
%
% additional part ends

Next turn to $E_{2b}$. Recall that $S_n = S_n(t) = \sum_{l=1}^n e_l(t)$, $R_n = R_n(t) = \sum_{l=1}^n r_l(t)$ and $p_k(w) = e^{-w} w^k / k!$. In the first term of $E_{2b}$ we may condition on $N_t'$ as it is independent from the other random variables and in the second term we do the same for $N_t$. Thus, if $w' = w + \frac{h_k}{\gamma_k^0 t}$ and $w = \frac{x_k}{2 \gamma_k^0 t}$, then by Lemma \ref{equ:lemma_11}(a) and Lemma \ref{equ:lemma_7}(b),
\begin{align*}
  & E_{2b} \\
  & = \; x_i \biggl| \sum \limits_{n=0}^{\infty} \left( p_n\!\left( w' \right) - p_n(w) \right) E \biggl[ \frac{\partial^2}{\partial x_j^2} G_{t,x^{N_R}}^k \!\left( x^{N_{C2}}, R_n + I_2(t), S_n + X_0'(t) \right) \biggr] \biggr| \\
  & \leq \; c x_i \sum \limits_{n=0}^{\infty} \left| \int_w^{w'} p_n'(u) du \right| \norm{f} \\
  & \quad \left. \; \times \,
    \begin{cases}
      E^{N_{C2}} \!\left[ \left( I_t^{(j)} \right)^{-1} \right], & k \notin C_j, \\
      \min \limits_{i \in C_j \backslash \{k\} } \!\left\{ E^{N_{C2}} \!\left[ \left( \int_0^t x_s^{(i)} ds \right)^{-1} \right] \right\} \wedge E \!\left[ \left( \int_0^t X_0'(s) ds \right)^{-1} \right] , & k \in C_j
    \end{cases} \right\} \\
  & \leq \; c \norm{f} x_i \sum \limits_{n=0}^{\infty} \left| \int_w^{w'} p_n'(u) du \right| t^{-1} \min \limits_{l \in C_j} \!\left\{ (t+x_l)^{-1} \right\},
\end{align*}
where we used that $X_0'$ starts at $\frac{x_k}{2}$ and thus by Lemma \ref{equ:lemma_7}(b)
\[ E \!\left[ \left( \int_0^t X_0'(s) ds \right)^{-1} \right] \leq c t^{-1} \left( t+\frac{x_k}{2} \right)^{-1} \leq c t^{-1} (t+x_k)^{-1}. \]
We therefore obtain with $i \in C_j$ 
\begin{align*}
  E_{2b} &\leq c \norm{f} x_i \int_w^{w'} \sum \limits_{n=0}^{\infty} p_n(u) \frac{|n-u|}{u} du t^{-1} (t+x_i)^{-1} \\
  &\leq c \norm{f} \left( \left( \int_w^{w'} \frac{1}{\sqrt{u}} du \right) \wedge \left( \int_w^{w'} 2 du \right) \right) t^{-1}, 
\end{align*}
where we used $\sum_{n=0}^{\infty} p_n(u) \frac{|n-u|}{u} = \frac{1}{u} E|N-u| \leq \frac{1}{u} \sqrt{E|N-u|^2} = \frac{1}{\sqrt{u}}$ and $\sum_{n=0}^{\infty} p_n(u) \frac{|n-u|}{u} \leq \sum_{n=0}^{\infty} p_n(u) \left( \frac{n}{u} + 1 \right) = \frac{E|N|}{u} + 1 = 2$ with $N$ being Poisson distributed with parameter $u$. Hence
\[ E_{2b} \leq c \norm{f} (w'-w) \left( \frac{1}{\sqrt{w}} \wedge 2 \right) t^{-1} = c \norm{f} \frac{h_k}{t} \left( \frac{\sqrt{t}}{\sqrt{x_k}} \wedge 2 \right) t^{-1}. \]
As $\left( \frac{1}{\sqrt{x_k}} \wedge \frac{2}{\sqrt{t}} \right) \leq c \frac{1}{\sqrt{t+x_k}}$ we finally get
\[ E_{2b} \leq c \norm{f} t^{-3/2} h_k (t+x_k)^{-1/2}. \] 

The bounds (\ref{equ:eightynine}) and (\ref{equ:ninety}) can be derived from the first two by an argument similar to the one used in the proof of Proposition \ref{equ:proposition_16} (alternatively refer to the end of the proof of Proposition 22 in \cite{r6}).
\end{proof} 
%
% ------------------------------------------------------------
%

In what follows recall Notation \ref{equ:notation_T_k}.
%
% ------------------------------------------------------------
%
\begin{pro} \label{equ:proposition_23}
If $f$ is a bounded Borel function on $\SSS_0$, then for all $x, h \in \SSS_0$, $i \in N_{C2}$ and arbitrary $k \in V$, 
\begin{equation} \label{equ:ninetytwo}
  \left| \dx{i}P_tf (x+h_k e_k) - \dx{i}P_tf (x) \right| \leq \frac{c \norm{f}}{t^{3/2}} |h_k| T_k^{-\frac{1}{2}} \!\left( t, x^{N_{C2}} \right)
\end{equation}
and
\begin{equation} \label{equ:ninetythree}
  \left| (x+h_k e_k)_i \dfdx{P_tf}{i} (x+h_k e_k) - x_i \dfdx{P_tf}{i} (x) \right| \leq \frac{c \norm{f}}{t^{3/2}} |h_k| T_k^{-\frac{1}{2}} \!\left( t, x^{N_{C2}} \right). 
\end{equation}
If $f \in \SSS^{\alpha}$, then
\[ \left| \dx{i}P_tf (x+h_k e_k) - \dx{i}P_tf (x) \right| \leq c |f|_{\alpha} t^{\frac{\alpha}{2}-\frac{3}{2}} |h_k| T_k^{-\frac{1}{2}} \!\left( t, x^{N_{C2}} \right) \]
and
\[ \left| (x+h_k e_k)_i \dfdx{P_tf}{i} (x+h_k e_k) - x_i \dfdx{P_tf}{i} (x) \right| \leq c |f|_{\alpha} t^{\frac{\alpha}{2}-\frac{3}{2}} |h_k| T_k^{-\frac{1}{2}} \!\left( t, x^{N_{C2}} \right). \]
\end{pro}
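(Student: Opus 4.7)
The plan is to mirror the strategy of Proposition \ref{equ:proposition_22}, now applied to the first and second derivatives in $x_i$ for $i \in N_{C2}$ using the representations of Proposition \ref{equ:proposition_14}. As in previous propositions I first reduce to bounded continuous $f$ via $P_t f = P_{t/2}(P_{t/2} f)$ and Lemma \ref{equ:lemma_12}. The bounds for $f \in \SSS^\alpha$ then follow from the bounds for bounded Borel $f$ by the telescoping/geometric-sum argument used at the end of Proposition \ref{equ:proposition_16}, applied to $g = P_t f - f$ together with $\norm{g} \leq |f|_\alpha t^{\alpha/2}$, so I focus on (\ref{equ:ninetytwo}) and (\ref{equ:ninetythree}).

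For increments in $x_k$ with $k \in N_R$, only the parameter $x^{N_R}$ of $G_{t,x^{N_R}}$ changes, so neither the canonical measures $\NN_0$ nor the law $E^{N_{C2}}$ is affected. Differentiating through the integrals in (\ref{equ:fourtyseven}) and (\ref{equ:fourtyseven_b}) (justified by Lemma \ref{equ:lemma_11}(b) and dominated convergence, exactly as in the derivation of (\ref{equ:sixtyone})), I bound the differences by expectations involving $\partial_{x_k}$ acting on $\Delta G^{+i}$ respectively $\Delta G^{+i,+i}$. The mixed partial bounds from Lemma \ref{equ:lemma_11}(b) give decay in $y_{j}^{-1} y_k^{-1/2}$, Cauchy--Schwarz followed by Lemma \ref{equ:lemma_7}(b) handles the resulting moments of $I_t^{(j)}$, and the factor $x_i$ in (\ref{equ:ninetythree}) absorbs the $(t+x_i)^{-1}$ coming from $j \in \bar{R}_i$ (so $i \in C_j$). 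This produces the claimed $c \norm{f} t^{-3/2} |h_k| \min_{m \in C_k}\{(t+x_m)^{-1/2}\} = c \norm{f} t^{-3/2} |h_k| T_k^{-1/2}(t, x^{N_{C2}})$.

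For increments in $x_k$ with $k \in N_{C2}$, I apply the coupling of Lemma \ref{equ:lemma_10} with $\rho = 1/2$, adding an independent Poisson point process $\Xi^h$ of intensity $h_k \NN_0$ so that $x^{(k)} + X^h$ has law $P_{x_k + h_k}^k$. As in the proof of Proposition \ref{equ:proposition_22}, I split the difference into a direct term $E_1$ (nonzero only when $k = i$ in (\ref{equ:ninetythree}), handled by Lemma \ref{equ:lemma_11}(a) as in $E_1$ of Proposition \ref{equ:proposition_22}), and a measure-change term $E_2 = E_{2a} + E_{2b}$, where $E_{2a}$ captures $I_3^h(t)$ in $\int_0^t X^{x+h}_s ds$ with $N_t$ fixed and $E_{2b}$ captures the shift from $N_t$ to $N_t'$. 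The bound $E[I_3^h(t)] \leq h_k t$ (via (\ref{equ:twentysix})) combined with Lemma \ref{equ:lemma_11}(b) and Lemma \ref{equ:lemma_7}(b) handles $E_{2a}$, and by (\ref{equ:fourtythree})--(\ref{equ:fourtyfour}) this term is zero when $k \in N_2$. For $E_{2b}$ I condition on $N_t$, write $p_n(w') - p_n(w) = \int_w^{w'} p_n'(u)\,du$ with $w' = w + h_k/(\gamma_k^0 t)$, $w = x_k/(2\gamma_k^0 t)$, and use $\sum_n p_n(u) |n-u|/u \leq (1/\sqrt{u}) \wedge 2$ together with the bound $(1/\sqrt{x_k}) \wedge (2/\sqrt{t}) \leq c/\sqrt{t+x_k}$ to arrive again at $c \norm{f} t^{-3/2} h_k (t+x_k)^{-1/2}$.

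The main obstacle is the second-derivative estimate (\ref{equ:ninetythree}) in the case $k \in N_{C2}$ with $i \in N_C$: here the double-integral representation from (\ref{equ:fourtyseven_b}) creates several cross terms, and one must exploit (\ref{equ:fourtyfive}) to cancel the null-measure components while keeping the extra outer factor $x_i$ balanced against $(t+x_i)^{-1}$ available only through $i \in C_j$ with $j \in \bar{R}_i$. Once these cancellations are done carefully, each surviving piece is bounded exactly as in the first-derivative argument, and assembling all pieces gives (\ref{equ:ninetytwo})--(\ref{equ:ninetythree}).
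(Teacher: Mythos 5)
Your overall architecture matches the paper: reduce to bounded continuous $f$, derive the $\SSS^\alpha$ bounds by the telescoping argument of Proposition~\ref{equ:proposition_16}, split the argument into increments in $x_k$ for $k\in N_R$ and $k\in N_{C2}$, and in the latter case use the decomposition of Lemma~\ref{equ:lemma_10} with $\rho=\tfrac12$.  Your handling of the $k\in N_R$ case and of the direct term $E_1 = \delta_{ki}h_k|(P_tf)_{kk}(x+h_ke_k)|$ is correct and matches the paper's Lemma~\ref{equ:lemma_27} and the first term of~(\ref{equ:ninetyseven}).

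However, the crux of~(\ref{equ:ninetythree}) for $k\in N_{C2}$ is not reducible to the $E_1/E_{2a}/E_{2b}$ scheme of Proposition~\ref{equ:proposition_22}, and your last paragraph, which concedes the difficulty but then asserts ``each surviving piece is bounded exactly as in the first-derivative argument,'' is where the gap lies.  In Proposition~\ref{equ:proposition_22} the object differentiated is $\partial^2_{x_j}$, $j\in N_R$, which admits the one-expectation formula $E^{N_{C2}}[\partial^2_j G(I_t^{N_R},x_t^{N_{C2}})]$; the coupling argument then acts on a single nice integrand.  Here the object is $\partial^2_{x_i}$, $i\in N_{C2}$, for which Proposition~\ref{equ:proposition_14} only gives the double-$\NN_0$ formula~(\ref{equ:fourtyseven_b}).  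The paper therefore first splits according to whether $\nu_t,\nu_t'$ are zero or positive, yielding the four pieces $\Delta_1,\dots,\Delta_4$ of~(\ref{equ:ninetysix}), and each requires a separate argument (Lemmas~\ref{equ:lemma_24}--\ref{equ:lemma_26}).  Two ideas in that treatment are genuinely new relative to Proposition~\ref{equ:proposition_22} and are not anticipated by your proposal: (i) for $\Delta_1$ with $i\neq k$ the second-order-difference trick of \cite{r6} (producing $(t+x_i)^{-2}$) fails and one must instead re-run the $E_2$-style bound with $\partial^2_{x_j}G$ replaced by $\Delta G^{+i,+i}$ so as to obtain the needed $(t+x_i)^{-1}(t+x_k)^{-1}$; and (ii) for $\Delta_4$ (both $\nu_t,\nu_t'>0$) one must perform the Lemma~\ref{equ:lemma_10} decomposition on \emph{both} $x^{(k)}$ and $x^{(i)}$, pass to the Poisson representation in both counts $N_t^{(k)}$ and $N_t^{(i)}$, and use a second-difference Poisson estimate $\sum_n\bigl|(p_{n-2}'-2p_{n-1}'+p_n')(u)\bigr|\leq cu^{-3/2}$, which has no analogue in the first-derivative argument.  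Without these two ingredients the proof does not close, so the proposal has a genuine gap precisely at the step it labels the ``main obstacle.''
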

%
% ------------------------------------------------------------
%
\begin{proof}
Proposition \ref{equ:proposition_23} is an extension of Proposition 23 in \cite{r6}. The last two inequalities follow from the first two by an argument similar to the one used in the proof of Proposition \ref{equ:proposition_16} (alternatively refer to the end of the proof of Proposition 22 in \cite{r6}). As the proof of (\ref{equ:ninetytwo}) is similar to, but much easier than, that of (\ref{equ:ninetythree}), we only prove the latter. As usual we may assume $f$ is bounded and continuous.

Recall the notation $\Delta G_{t,x^{N_R}}^{+i,+i}(X,\nu,\nu')$ from (\ref{equ:fourtyfive}). Proposition \ref{equ:proposition_14} gives
\begin{equation} \label{equ:ninetysix}
  (P_t f)_{ii} (x) = \sum \limits_{n=1}^4 E^{N_{C2}} \!\left[ \Delta_n G_{t,x^{N_R}} \!\left( x^{N_{C2}} \right) \right], 
\end{equation}
where
\begin{align*}
  \Delta_1 G_{t,x^{N_R}}(X) &\equiv \int \int \Delta G^{+i,+i}_{t,x^{N_R}} (X,\nu,\nu') \1_{\{\nu_t=\nu_t'=0\}} d\NN_0(\nu) d\NN_0(\nu'), \\
  \Delta_2 G_{t,x^{N_R}}(X) &\equiv \int \int \Delta G^{+i,+i}_{t,x^{N_R}} (X,\nu,\nu') \1_{\{\nu_t>0,\nu_t'=0\}} d\NN_0(\nu) d\NN_0(\nu'), \\
  \Delta_3 G_{t,x^{N_R}}(X) &\equiv \int \int \Delta G^{+i,+i}_{t,x^{N_R}} (X,\nu,\nu') \1_{\{\nu_t=0,\nu_t'>0\}} d\NN_0(\nu) d\NN_0(\nu')
\end{align*}
and
\begin{align*}
  \Delta_4 G_{t,x^{N_R}}(X) &\equiv \int \int \Delta G^{+i,+i}_{t,x^{N_R}} (X,\nu,\nu') \1_{\{\nu_t>0,\nu_t'>0\}} d\NN_0(\nu) d\NN_0(\nu') \\
  &\stackrel{(*)}{=} \frac{c}{t^2} \int \int \Delta G^{+i,+i}_{t,x^{N_R}} (X,\nu,\nu') \1_{\{\nu_t>0,\nu_t'>0\}} dP_t^*(\nu) dP_t^*(\nu').
\end{align*}

Let us consider first the increments in $x_k, k \in N_{C2}$. Increments in $x_k, k \in N_R$ will follow at the end of this section in Lemma \ref{equ:lemma_27}. Let $h_k \geq 0$ and use (\ref{equ:ninetysix}) to obtain
\begin{align}
  & \left| (x+h_k e_k)_i (P_tf)_{ii} (x+h_k e_k) - x_i (P_tf)_{ii} (x) \right| \label{equ:ninetyseven} \\
  & \leq \sum \limits_{n=1}^4 \left| x_i \left( E_{x^{N_{C2}} + h_k e_k}^{N_{C2}} - E_{x^{N_{C2}}}^{N_{C2}} \right) \!\left[ \Delta_n G_{t,x^{N_R}}\!\left( x^{N_{C2}} \right) \right] \right| \nonumber \\
  & \qquad + h_k \left| (P_t f)_{kk}(x+h_k e_k) \right|. \nonumber
\end{align}
The last term on the right hand side can be bounded via (\ref{equ:seventyone}) as follows:
\[ h_k \left| (P_t f)_{kk}(x+h_k e_k) \right| \leq h_k \frac{c \norm{f}}{t(t+x_k)} \leq c \norm{f} h_k t^{-3/2} (t+x_k)^{-1/2}, \]
where we used $h_k \geq 0$.

In the following Lemmas \ref{equ:lemma_24}, \ref{equ:lemma_25} and \ref{equ:lemma_26} we again use the decompositions from Lemma \ref{equ:lemma_10} with $\rho=\frac{1}{2}$ to bound the first four terms in (\ref{equ:ninetyseven}).
%
% ------------------------------------------------------------
%
\begin{lem} \label{equ:lemma_24}
For $k \in N_{C2}$ (and $i \in N_{C2}$) we have
\[ \left| x_i \left( E_{x^{N_{C2}} + h_k e_k}^{N_{C2}} - E_{x^{N_{C2}}}^{N_{C2}} \right) \!\left[ \Delta_1 G_{t,x^{N_R}}\!\left( x^{N_{C2}} \right) \right] \right| \leq \frac{ c \norm{f} }{ t^{3/2} (t+x_k)^{1/2} } h_k. \]
\end{lem}
%
% ------------------------------------------------------------
%
\begin{proof}
This Lemma corresponds to Lemma 24 in \cite{r6}. In \cite{r6} one considered $\Delta G^{+i,+i}(\cdot)$ as a second order difference, thus obtaining terms involving $(t+x_i)^{-2}$. In our setting this method will not work for $i \neq k$ as we do in fact need terms of the form $(t+x_i)^{-1} (t+x_k)^{-1}$. Instead, we shall bound the left hand side by reasoning as for the $E_2$-term in Proposition 22 of \cite{r6} (part of the proof can be found in this paper in the proof of Proposition \ref{equ:proposition_22}), but with $\frac{\partial^2}{\partial x_j^2} G(\cdot), j \in N_R$ replaced by $\Delta G^{+i,+i}(\cdot), i \in N_{C2}$. 
%
% additional part starts

Let
\[ E \equiv \left| x_i \left( E_{x^{N_{C2}} + h_k e_k}^{N_{C2}} - E_{x^{N_{C2}}}^{N_{C2}} \right) \left[ \Delta_1 G_{t,x^{N_R}}\left( x^{N_{C2}} \right) \right] \right|. \]
First bound $E$ as follows (using the obvious analogies to former notations and Lemma \ref{equ:lemma_10} with $\rho=\frac{1}{2}$).
\begin{align*}
  E &= x_i \left| E \left[ \Delta_1 G_{t,x^{N_R}}^k \left( x^{N_{C2}}, \sum \limits_{l=1}^{N_t'} r_l(t) + I_2(t) + I_3^h(t), \sum \limits_{l=1}^{N_t'} e_l(t) + X_0'(t) \right) \right. \right. \\
  & \qquad \quad \left. \left. - \Delta_1 G_{t,x^{N_R}}^k \left( x^{N_{C2}}, \sum \limits_{l=1}^{N_t} r_l(t) + I_2(t), \sum \limits_{l=1}^{N_t} e_l(t) + X_0'(t) \right) \right] \right| 
\end{align*}
\begin{align*}
  &\leq x_i \left| E \left[ \Delta_1 G_{t,x^{N_R}}^k \left( x^{N_{C2}}, \sum \limits_{l=1}^{N_t'} r_l(t) + I_2(t) + I_3^h(t), \sum \limits_{l=1}^{N_t'} e_l(t) + X_0'(t) \right) \right. \right. \\
  & \qquad \quad \left. \left. - \Delta_1 G_{t,x^{N_R}}^k \left( x^{N_{C2}}, \sum \limits_{l=1}^{N_t'} r_l(t) + I_2(t), \sum \limits_{l=1}^{N_t'} e_l(t) + X_0'(t) \right) \right] \right| \\
  & + x_i \left| E \left[ \Delta_1 G_{t,x^{N_R}}^k \left( x^{N_{C2}}, \sum \limits_{l=1}^{N_t'} r_l(t) + I_2(t), \sum \limits_{l=1}^{N_t'} e_l(t) + X_0'(t) \right) \right. \right. \\
  & \qquad \qquad \left. \left. - \Delta_1 G_{t,x^{N_R}}^k \left( x^{N_{C2}}, \sum \limits_{l=1}^{N_t} r_l(t) + I_2(t), \sum \limits_{l=1}^{N_t} e_l(t) + X_0'(t) \right) \right] \right| \\
  &\equiv E_a + E_b.
\end{align*}

Observe that $E_a=0$ for $k \in N_2$. If $k \notin N_2$ but $i \in N_2$, then \\ $\Delta G^{+i,+i}_{t,x^{N_R}}(X,\nu,\nu') \1_{\{\nu_t=\nu_t'=0\}} = 0$ (see (\ref{equ:fourtynine})) and thus $\Delta_1 G_{t,x^{N_R}}^k(\cdot) \equiv 0$. If $i, k \in N_C$ the integrand in $E_a$ is a third order difference to which we may apply Lemma \ref{equ:lemma_13} and Lemma \ref{equ:lemma_11}(b). Together with the independence of $x^{N_{C2}}$ and $\Xi^h$ we conclude that
\begin{align*}
  E_a &\leq x_i c \norm{f} \sum \limits_{j_1: j_1 \in \bar{R}_i} \sum \limits_{j_2: j_2 \in \bar{R}_i} \sum \limits_{j_3: j_3 \in \bar{R}_k} E^{N_{C2}} \left[ \left( I_t^{(j_1)} \right)^{-1} \left( I_t^{(j_2)} \right)^{-1} \left( I_t^{(j_3)} \right)^{-1} \right] \\
  & \; \times \, E\!\left[ I_3^h(t) \right] \int \int_0^t \nu_s ds d\NN_0(\nu) \int \int_0^t \nu_s' ds d\NN_0(\nu') \\
  &\leq x_i c \norm{f} t^{-1} (t+x_i)^{-1} t^{-1} (t+x_i)^{-1} t^{-1} (t+x_k)^{-1} h_k t t^2 \\
  &\leq c \norm{f} h_k t^{-3/2} (t+x_k)^{-1/2}.
\end{align*}
This holds in particular for all $i, k \in N_{C2}$ by our precedent observations. 

By arguing as for $E_a$ we can assume w.l.o.g. that $i \in N_C$ when estimating $E_b$. The independence of $N_t'$ from the other random variables appearing in the first term in $E_b$ allows us to condition on its value as already done in Proposition \ref{equ:proposition_22}. The same is true for $N_t$ in the second term in $E_b$. Let $w = \frac{x_k}{2 \gamma_k^0 t}$ and $w' = w + \frac{h_k}{\gamma_k^0 t}$ and note that $\Delta_1 G_{t,x^{N_R}}$ is a second order difference to which we may apply Lemma \ref{equ:lemma_13}. Together with Lemma \ref{equ:lemma_11}(b) and Lemma \ref{equ:lemma_7}(b) this gives
\begin{align*}
  E_b &= x_i \left| \sum \limits_{n=0}^{\infty} \left( p_n(w') - p_n(w) \right) \right. \\
  & \left. \; \times \, E \left[ \Delta_1 G_{t,x^{N_R}}^k \left( x^{N_{C2}}, R_n(t) + I_2(t), S_n(t) + X_0'(t) \right) \right] \right| \\
  &\leq c x_i \sum \limits_{n=0}^{\infty} \left| \int_w^{w'} p_n'(u) du \right| \norm{f} \sum \limits_{j_1: j_1 \in \bar{R}_i} \sum \limits_{j_2: j_2 \in \bar{R}_i} \\
  & \quad \left. E \left[
    \begin{cases}
      \left( I_t^{(j_1)} \right)^{-1}, & k \notin C_{j_1}, \\
      \min \limits_{i \in C_{j_1} \backslash \{k\} } \left\{ \left( \int_0^t x_s^{(i)} ds \right)^{-1} \right\} \wedge & \ \\
      \qquad \left( \int_0^t X_0'(s) ds \right)^{-1} , & k \in C_{j_1}
    \end{cases} \right\} \right. \\
  & \quad \left. \times \left.
    \begin{cases}
      \left( I_t^{(j_2)} \right)^{-1}, & k \notin C_{j_2}, \\
      \min \limits_{i \in C_{j_2} \backslash \{k\} } \left\{ \left( \int_0^t x_s^{(i)} ds \right)^{-1} \right\} \wedge & \  \\
      \qquad \left( \int_0^t X_0'(s) ds \right)^{-1} , & k \in C_{j_2}
    \end{cases} \right\} \right] \left( \int \int_0^t \nu_s ds d\NN_0(\nu) \right)^2 \\
  &\leq c \norm{f} x_i \sum \limits_{n=0}^{\infty} \left| \int_w^{w'} p_n'(u) du \right| t^{-2} (t+x_i)^{-2} t^2,
\end{align*}
where we used again that $X_0'$ starts at $\frac{x_k}{2}$ and thus by Lemma \ref{equ:lemma_7}(b) \\
$E \left[ \left( \int_0^t X_0'(s) ds \right)^{-1} \right] \leq c t^{-1} (t+\frac{x_k}{2})^{-1} \leq c t^{-1} (t+x_k)^{-1}$. 

Now proceed as in the estimation of $E_{2b}$ in the proof of Proposition \ref{equ:proposition_22} to obtain 
\begin{align*}
  E_b &\leq c \norm{f} t^{-1/2} h_k (t+x_k)^{-1/2} t^{-2} (t+x_i)^{-1} t^2 \\
  &\leq c \norm{f} h_k t^{-3/2} (t+x_k)^{-1/2}.
\end{align*}
The above bounds on $E_a$ and $E_b$ give the required result. 
%
% additional part ends
%
\end{proof}
%
% ------------------------------------------------------------
%
\begin{lem} \label{equ:lemma_25}
For $k \in N_{C2}$ (and $i \in N_{C2}$) and $n=2,3$ we have
\begin{equation} \label{equ:def_E}
  \left| x_i \left( E_{x^{N_{C2}} + h_k e_k}^{N_{C2}} - E_{x^{N_{C2}}}^{N_{C2}} \right) \!\left[ \Delta_n G_{t,x^{N_R}}\!\left( x^{N_{C2}} \right) \right] \right| \leq \frac{ c \norm{f} }{ t^{3/2} (t+x_k)^{1/2} } h_k. 
\end{equation}
\end{lem}
%
% ------------------------------------------------------------
%
\begin{proof}
By symmetry we only need to consider $n=2$. As before let $w = \frac{x_k}{2 \gamma_k^0 t}$, $w' = w + \frac{h_k}{\gamma_k^0 t}$, $S_n = \sum_{l=1}^n e_l(t)$ and $R_n = \sum_{l=1}^n r_l(t)$. Let $Q_h$ be the law of $I_3^h(t)$ as defined after (\ref{equ:thirtyfive}). As this random variable is independent of the others appearing below we may condition on it and use (\ref{equ:thirtythree}), (\ref{equ:thirtyfour}) and (\ref{equ:thirtyfive}) to conclude
\begin{align*}
  & x_i E_{x^{N_{C2}} + h_k e_k}^{N_{C2}} \!\left[ \Delta_2 G_{t,x^{N_R}} \!\left( x^{N_{C2}} \right) \right] \\
  & = x_i E \biggl[ \int \int \int \biggl\{ G_{t,x^{N_R}}^{k,+i,+i} \biggl( x^{N_{C2}}, I_2(t) + z + R_{N_t'}, X_0'(t) + S_{N_t'}; \\
  & \qquad \qquad \qquad \qquad \qquad \qquad \qquad \qquad \qquad \quad \int_0^t \nu_s ds, \nu_t, \int_0^t \nu_s' ds, 0 \biggr) \\
  & \quad - G_{t,x^{N_R}}^{k,+i,+i} \!\left( x^{N_{C2}}, I_2(t) + z + R_{N_t'}, X_0'(t) + S_{N_t'}; 0, 0, \int_0^t \nu_s' ds, 0 \right) \\
  & \quad - G_{t,x^{N_R}}^{k,+i,+i} \!\left( x^{N_{C2}}, I_2(t) + z + R_{N_t'}, X_0'(t) + S_{N_t'}; \int_0^t \nu_s ds, \nu_t, 0, 0 \right) \\
  & \quad + G_{t,x^{N_R}}^{k,+i,+i} \Bigl( x^{N_{C2}}, I_2(t) + z + R_{N_t'}, X_0'(t) + S_{N_t'}; 0, 0, 0, 0 \Bigr) \biggr\} \\
  & \qquad \qquad \times \1_{\{\nu_t>0\}} \1_{\{\nu_t'=0\}} d\NN_0(\nu) d\NN_0(\nu') dQ_h(z) \biggr].
\end{align*}
When working under $E_{x^{N_{C2}}}^{N_{C2}}$ there is no $I_3^h(t)$ term. Hence we obtain the same formula with $z$ replaced by $0$ and $N_t'$ replaced by $N_t$. The difference of these terms can be bounded by a difference dealing with the change from $z$ to $0$ and the change from $N_t'$ to $N_t$ separately. For the second term we recall that $p_n(u)=e^{-u} u^n / n!$ and observe that $N_t'$ is independent of the other random variables. Hence we may condition on its value to see that the l.h.s. of (\ref{equ:def_E}) is at most
\begin{align*}
  & x_i \biggl| E \biggl[ \int \int \int \biggl\{ \Delta G_{t,x^{N_R}}^{k,+i,+i} \Bigl( x^{N_{C2}}, I_2(t) + z + R_{N_t'}, X_0'(t) + S_{N_t'}; \\
  & \qquad \qquad \qquad \qquad \qquad \qquad \qquad \qquad \qquad \qquad \quad \int_0^t \nu_s ds, \nu_t, \int_0^t \nu_s' ds, 0 \Bigr) \\
  & \quad \; - \Delta G_{t,x^{N_R}}^{k,+i,+i} \!\left( x^{N_{C2}}, I_2(t) + R_{N_t'}, X_0'(t) + S_{N_t'}; \int_0^t \nu_s ds, \nu_t, \int_0^t \nu_s' ds, 0 \right) \biggr\} \\
  & \quad \qquad \quad \times \1_{\{\nu_t>0\}} \1_{\{\nu_t'=0\}} d\NN_0(\nu) d\NN_0(\nu') dQ_h(z) \biggr] \biggr| \\
  & \; \; + \, x_i \biggl| \sum \limits_{n=0}^{\infty} (p_n(w')-p_n(w)) E \biggl[ \int \int \Delta G_{t,x^{N_R}}^{k,+i,+i} \Bigl( x^{N_{C2}}, \\
  & \qquad \qquad \qquad \qquad \qquad \quad I_2(t) + R_n, X_0'(t) + S_n; \int_0^t \nu_s ds, \nu_t, \int_0^t \nu_s' ds, 0 \Bigr) \\
  & \quad \qquad \qquad \qquad \qquad \qquad \qquad \times \1_{\{\nu_t>0\}} \1_{\{\nu_t'=0\}} d\NN_0(\nu) d\NN_0(\nu') \biggr] \biggr| \\
  & \equiv E_a + E_b.
\end{align*}
The first term can be rewritten as the sum of two second order differences (one in $z$, one in $\int_0^t \nu_s' ds$). Together with Lemma \ref{equ:lemma_13}, Lemma \ref{equ:lemma_11}(b) and Lemma \ref{equ:lemma_7}(b) we therefore obtain (terms including empty sums are again understood as being zero)
\begin{align*}
  E_a \leq & \; 2 x_i c \norm{f} \sum \limits_{j_1: j_1 \in \bar{R}_i} \sum \limits_{j_2: j_2 \in \bar{R}_k} \\
  & \quad E \!\left[ \left.
    \begin{cases}
      \left( I_t^{(j_1)} \right)^{-1}, & k \notin C_{j_1}, \\
      \min \limits_{m \in C_{j_1} \backslash \{k\} } \!\left\{ \left( \int_0^t x_s^{(m)} ds \right)^{-1} \right\} \wedge \left( \int_0^t X_0'(s) ds \right)^{-1} , & k \in C_{j_1}
    \end{cases} \right\} \right. \\
  & \left. \qquad \quad \times \; \left( \int_0^t X_0'(s) ds \right)^{-1} \right] \int \int_0^t \nu_s' ds d\NN_0(\nu') \NN_0[\nu_t>0] \int z dQ_h(z) \\
  &\leq \; x_i c \norm{f} t^{-2} (t+x_i)^{-1} (t+x_k)^{-1} t t^{-1} h_k t \\
  &\leq \; c \norm{f} h_k t^{-3/2} (t+x_k)^{-1/2}.
\end{align*}
Turning to $E_b$ observe that we have the sum of two first order differences (both in $\int_0^t \nu_s' ds$). Together with the triangle inequality, Lemma \ref{equ:lemma_11}(b) and Lemma \ref{equ:lemma_7}(b) we therefore obtain
\begin{align*}
  E_b \leq& \; c x_i \sum \limits_{n=0}^{\infty} \left| \int_w^{w'} p_n'(u) du \right| \norm{f} \sum \limits_{j_1: j_1 \in \bar{R}_i} \\
  & \left. E \!\left[
    \begin{cases}
      \left( I_t^{(j_1)} \right)^{-1}, & k \notin C_{j_1}, \\
      \min \limits_{m \in C_{j_1} \backslash \{k\} } \!\left\{ \left( \int_0^t x_s^{(m)} ds \right)^{-1} \right\} \wedge \left( \int_0^t X_0'(s) ds \right)^{-1} , & k \in C_{j_1}
    \end{cases} \right\} \right] \\
  & \; \times \, \int \int_0^t \nu_s' ds d\NN_0(\nu') \NN_0[\nu_t>0] \\
  \leq& \; c x_i \sum \limits_{n=0}^{\infty} \left| \int_w^{w'} p_n'(u) du \right| \norm{f} t^{-1} (t+x_i)^{-1} t t^{-1}.
\end{align*}

Now proceed again as in the estimation of $E_{2b}$ in the proof of Proposition \ref{equ:proposition_22} to get
\begin{align*}
  E_b &\leq c x_i t^{-1/2} h_k (t+x_k)^{-1/2} \norm{f} t^{-1} (t+x_i)^{-1} t t^{-1} \\
  &\leq c \norm{f} h_k t^{-3/2} (t+x_k)^{-1/2}.
\end{align*}
The above bounds on $E_a$ and $E_b$ give the required result. 
\end{proof}
%
% ------------------------------------------------------------
%
\begin{nota}
Let
\[ G_{t,x^{N_R}}^{m,n \neq m} \!\left( X, Y_t, Z_t, Y_t', Z_t' \right) \equiv 
  \begin{cases}
    G_{t,x^{N_R}}^{m,n} \!\left( X, Y_t, Z_t, Y_t', Z_t' \right) & \mbox{ if } n \neq m \cr
    G_{t,x^{N_R}}^m \!\left( X, Y_t, Z_t\right) & \mbox{ if } n = m.
  \end{cases} \]
Expressions such as $G_{t,x^{N_R}}^{m,n \neq m,+k,+l} \!\left( X, Y_t, Z_t, Y_t', Z_t';\int_0^t \eta_s ds,\theta_t,\int_0^t \eta_s' ds,\theta_t' \right)$ \\ will be defined similarly. 
\end{nota}
%
% ------------------------------------------------------------
%
\begin{lem} \label{equ:lemma_26}
For $k \in N_{C2}$ (and $i \in N_{C2}$) we have
\[ \left| x_i \left( E_{x^{N_{C2}} + h_k e_k}^{N_{C2}} - E_{x^{N_{C2}}}^{N_{C2}} \right) \!\left[ \Delta_4 G_{t,x^{N_R}}\!\left( x^{N_{C2}} \right) \right] \right| \leq \frac{ c \norm{f} }{ t^{3/2} (t+x_k)^{1/2} } h_k. \]
\end{lem}
%
% ------------------------------------------------------------
%
\begin{proof}
Let
\begin{equation} \label{equ:error_in_case_4}
  E \equiv x_i \left| \left( E_{x^{N_{C2}} + h_k e_k}^{N_{C2}} - E_{x^{N_{C2}}}^{N_{C2}} \right) \!\left[ \Delta_4 G_{t,x^{N_R}}\!\left( x^{N_{C2}} \right) \right] \right|. 
\end{equation}
We use the same setting and notation as in Lemma \ref{equ:lemma_25}. 
Proceeding as in the estimation of the l.h.s. in (\ref{equ:def_E}), thereby not only decomposing $x^{(k)}$ but also $x^{(i)}$ (the respective parts of the decomposition of $x^{(k)}$ and $x^{(i)}$ are designated via upper indices $k$ resp. $i$ and are independent for $k \neq i$), we have
\begin{align*}
  & x_i E_{x^{N_{C2}} + h_k e_k}^{N_{C2}} \!\left[ \Delta_4 G_{t,x^{N_R}} \!\left( x^{N_{C2}} \right) \right] \\
  & = x_i E \biggl[ \int \int \int \Delta G_{t,x^{N_R}}^{k,i \neq k,+i,+i} \!\left( x^{N_{C2}}, I_2^{(k)}(t) + z + R_{N_t^{'(k)}}^{(k)}, X_0^{'(k)}(t) \right. \\
  & \qquad \quad \left. + S_{N_t^{'(k)}}^{(k)}, I_2^{(i)}(t) + R_{N_t^{(i)}}^{(i)}, X_0^{'(i)}(t) + S_{N_t^{(i)}}^{(i)}; \int_0^t \nu_s ds, \nu_t, \int_0^t \nu_s' ds, \nu_t' \right) \\
  & \qquad \qquad \times \1_{\{\nu_t>0\}} \1_{\{\nu_t'>0\}} d\NN_0(\nu) d\NN_0(\nu') dQ_h(z) \biggr].
\end{align*}

Now let for $k=i$
\[ \hat{G}_n(z) \equiv E \!\left[ G_{t,x^{N_R}}^k \!\left( x^{N_{C2}}, I_2^{(k)}(t) + z + R_n^{(k)}, X_0^{'(k)}(t) + S_n^{(k)} \right) \right], \]
respectively for $k \neq i$,
\begin{align*}
  \hat{G}_n \!\left( z,N_t^{'(k)} \right) & \equiv E \Bigl[ G_{t,x^{N_R}}^{k,i} \Bigl( x^{N_{C2}}, I_2^{(k)}(t) + z + R_{N_t^{'(k)}}^{(k)}, X_0^{'(k)}(t) + S_{N_t^{'(k)}}^{(k)}, \\
  & \qquad \qquad \qquad \quad I_2^{(i)}(t) + R_n^{(i)}, X_0^{'(i)}(t) + S_n^{(i)} \Bigr) \Bigr].
\end{align*}
Note that the expectation in the definition of $\hat{G}_n \!\left( z,N_t^{'(k)} \right)$ excludes the random variable $N_t^{'(k)}$. Use $w^{'(k)} = \frac{x_k}{2\gamma_k^0 t} + \frac{h_k}{\gamma_k^0 t}$ (i.e. $\rho=1/2$) to obtain for $k=i$
\begin{align}
  & x_k E_{x^{N_{C2}} + h_k e_k}^{N_{C2}} \!\left[ \Delta_4 G_{t,x^{N_R}} \!\left( x^{N_{C2}} \right) \right] \label{equ:term4_k_equ_i} \\
  & \stackrel{(*)}{=} c \frac{x_k}{t^2} \sum \limits_{n=0}^{\infty} p_n \!\left( w^{'(k)} \right) \int \left( \hat{G}_{n+2} -2 \hat{G}_{n+1} + \hat{G}_n \right)\!(z) dQ_h(z), \nonumber
\end{align}
and use $w^{(i)} = \frac{x_i}{2\gamma_i^0 t}$ to obtain for $k \neq i$
\begin{align}
  & x_i E_{x^{N_{C2}} + h_k e_k}^{N_{C2}} \!\left[ \Delta_4 G_{t,x^{N_R}} \!\left( x^{N_{C2}} \right) \right] \label{equ:term4_k_neq_i} \\
  & \stackrel{(*)}{=} c \frac{x_i}{t^2} \sum \limits_{n=0}^{\infty} p_n \!\left( w^{(i)} \right) E \biggl[ \int \left( \hat{G}_{n+2} -2 \hat{G}_{n+1} + \hat{G}_n \right)\!\left( z,N_t^{'(k)} \right) dQ_h(z) \biggr]. \nonumber
\end{align}

A similar argument holds for $x_i E_{x^{N_{C2}}}^{N_{C2}} \!\left[ \Delta_4 G_{t,x^{N_R}} \!\left( x^{N_{C2}} \right) \right]$. Indeed, if $k=i$ replace $z$ by $0$ and replace $w^{'(k)}$ by $w^{(k)} = \frac{x_k}{2\gamma_k^0 t}$ in (\ref{equ:term4_k_equ_i}). If $k \neq i$ replace $z$ by $0$ and replace $N_t^{'(k)}$ by $N_t^{(k)}$ in (\ref{equ:term4_k_neq_i}).

Let us first investigate the case $k=i$. Define
\[ \hat{H}_n(z) = \hat{G}_n(z) - \hat{G}_n(0) \]
to get for $E$ as in (\ref{equ:error_in_case_4}),
\begin{align*}
  E \leq& \; c \frac{x_k}{t^2} \left| \sum \limits_{n=0}^{\infty} p_n \!\left( w^{'(k)} \right) \int \left( \hat{H}_{n+2} -2 \hat{H}_{n+1} + \hat{H}_n \right)\!(z) dQ_h(z) \right| \\
  & \; + \, c \frac{x_k}{t^2} \left| \sum \limits_{n=0}^{\infty} \left( p_n \!\left( w^{'(k)} \right) - p_n \!\left( w^{(k)} \right) \right) \left( \hat{G}_{n+2} -2 \hat{G}_{n+1} + \hat{G}_n \right)\!(0) \right| \\
  \equiv& \; E_1 + E_2.
\end{align*}

We can bound $E_1$ by
\[ c \frac{x_k}{t^2} \sum \limits_{n=0}^{\infty} \left| \left( p_{n-2} -2 p_{n-1} + p_n \right)\!\left( w^{'(k)} \right) \right| \sup \limits_{n \geq 0} \!\left| \int \hat{H}_n(z) dQ_h(z) \right|, \] 
where $p_n(w) \equiv 0$ if $n<0$. By using $q_n(w)=w p_n(w)$ and $\sum_{n=0}^{\infty} |(q_{n-2} -2 q_{n-1} + q_n)(w)| \leq 2$ (see \cite{r6}, (109)) we obtain
\[ E_1 \leq c \frac{x_k}{t^2} \frac{1}{w^{'(k)}} \sup \limits_{n \geq 0} \!\left| \int \hat{H}_n(z) dQ_h(z) \right|. \]
Next observe that $\hat{H}_n(z)$ is zero for $k \in N_2$ (recall that for $k \in N_2$ the indicated change from $\int_0^t x_s^{(k)} ds$ into $I_2^{(k)}(t) + z + R_n^{(k)}$ resp. $I_2^{(k)}(t) + R_n^{(k)}$ has no impact on the terms under consideration) and is a first order difference for $k \in N_C$ for which we obtain as usual
\begin{align*}
  \left| \int \hat{H}_n(z) dQ_h(z) \right| &\leq c \norm{f} t^{-1} (t+x_k)^{-1} \int z dQ_h(z) \\
  &\leq c \norm{f} t^{-1} (t+x_k)^{-1} h_k t \\
  &\leq c \norm{f} h_k t^{-1/2} (t+x_k)^{-1/2}.
\end{align*}
Together with $w^{(k)} = \frac{x_k}{2\gamma_k^0 t}$ and $w^{'(k)} = \frac{x_k}{2\gamma_k^0 t} + \frac{h_k}{\gamma_k^0 t}$ this gives
\[ E_1 \leq c \norm{f} h_k t^{-3/2} (t+x_k)^{-1/2}. \]

For $E_2$ we obtain with $\norm{G} \leq \norm{f}$ and Fubini's theorem
\begin{align}
  E_2 \leq& \; c \norm{f} \frac{x_k}{t^2} \sum \limits_{n=0}^{\infty} \Bigl| \left( p_{n-2} -2 p_{n-1} + p_n \right)\!\left( w^{'(k)} \right) \label{equ:integrand_du} \\
  & \; - \, \left( p_{n-2} -2 p_{n-1} + p_n \right)\!\left( w^{(k)} \right) \Bigr| \nonumber \\
  \leq& \; c \norm{f} \frac{x_k}{t^2} \int_{w^{(k)}}^{w^{'(k)}} \sum \limits_{n=0}^{\infty} \left| \left( p_{n-2}' -2 p_{n-1}' + p_n' \right)\!(u) \right| du. \nonumber
\end{align}
As $p_n(u) = e^{-u} \frac{u^n}{n!}$ we have $p_n'(u) = -p_n(u) + p_{n-1}(u)$ and thus we obtain in case $0<u<1$ for the integrand
\[ \sum \limits_{n=0}^{\infty} \left| \left( p_{n-2}' -2 p_{n-1}' + p_n' \right)\!(u) \right| \leq 8. \]
For $u \geq 1$ we obtain for the integrand as an upper bound
\begin{align*}
  & p_0(u) + p_1(u) \left| 3\frac{1}{u} -1 \right| + \sum \limits_{n=2}^{\infty} p_n(u) \left| \frac{n(n-1)(n-2)}{u^3} - 3 \frac{n(n-1)}{u^2} + 3 \frac{n}{u} - 1 \right| \\
  & \leq e^{-u} (1 +3 +u) + \frac{1}{u^3} \sum \limits_{n=2}^{\infty} p_n(u) \left| ( n-u )^3 - 3 n ( n-u ) + 2 n \right| \\
 & \leq e^{-u} (4+u) + \frac{1}{u^3} \left( E\!\left| N_u-u \right|^3 + 3 \sqrt{ E N_u^2 E \!\left( N_u-u \right)^2 } + 2 E N_u \right),
\end{align*}
where $N_u$ is Poisson with mean $u$. Note that $E|N_u-u|^m \leq c_m u^{m/2}$ for $m \in \NN$ and $u \geq 1$. We also have $E N_u = u$ and $E N_u^2 = u^2+u$. This yields as an upper bound for the integrand in (\ref{equ:integrand_du}) for $u \geq 1$
\[ c u^{-\frac{3}{2}} + \frac{1}{u^3} \left( c_3 u^{3/2} + 3 \sqrt{ \left( u^2+u \right) c_2 u^1 } + 2 u \right) \leq c u^{-\frac{3}{2}}. \]
We thus get for $E_2$
\begin{align*}
  E_2 &\leq c \norm{f} \frac{x_k}{t^2} \int_{w^{(k)}}^{w^{'(k)}} \left( u + \frac{1}{2 \gamma_k^0} \right)^{-3/2} du \\
  &\leq c \norm{f} \frac{x_k}{t^2} \left| w^{'(k)} - w^{(k)} \right| \left( w^{(k)} + \frac{1}{2 \gamma_k^0} \right)^{-3/2} \\
  &\leq c \norm{f} \frac{x_k}{t^2} \frac{h_k}{t} \left( \frac{ x_k+t }{2 \gamma_k^0 t} \right)^{-3/2} \\
  &\leq c \norm{f} h_k t^{-3/2} \left( t+x_k \right)^{-1/2}.
\end{align*}

Together with the bound on $E_1$ the assertion now follows for $k=i$.

Next investigate the case $k \neq i$. Define
\begin{align*}
  \hat{H}_n^1\!\left( z,N_t^{'(k)} \right) &= \hat{G}_n\!\left( z,N_t^{'(k)} \right) - \hat{G}_n\!\left( 0,N_t^{'(k)} \right), \\
  \hat{H}_n^2\!\left( N_t^{'(k)},N_t^{(k)} \right) &= \hat{G}_n\!\left( 0,N_t^{'(k)} \right) - \hat{G}_n\!\left( 0,N_t^{(k)} \right)
\end{align*}
to get
\begin{align*}
  E \leq& \; c \frac{x_i}{t^2} \biggl| \sum \limits_{n=0}^{\infty} p_n \!\left( w^{(i)} \right) E \biggl[ \int \left( \hat{H}_{n+2}^1 -2 \hat{H}_{n+1}^1 + \hat{H}_n^1 \right)\!\left( z,N_t^{'(k)} \right) dQ_h(z) \biggr] \biggr| \\
  & \!\! + \, c \frac{x_i}{t^2} \biggl| \sum \limits_{n=0}^{\infty} p_n \!\left( w^{(i)} \right) E \biggl[ \int \left( \hat{H}_{n+2}^2 -2 \hat{H}_{n+1}^2 + \hat{H}_n^2 \right)\!\left( N_t^{'(k)},N_t^{(k)} \right) dQ_h(z) \biggr] \biggr|.
\end{align*}
Recall that the expectation in the definition of $\hat{G}_n \!\left( z,N_t^{'(k)} \right)$ and thus of \\ $\hat{H}_n^1\!\left( z,N_t^{'(k)} \right)$ excludes the random variable $N_t^{'(k)}$. To bound $E$ we thus take expectation w.r.t. $N_t^{'(k)}$, too. Rewriting this yields
\begin{align*}
  E \leq& \; c \frac{x_i}{t^2} \sum \limits_{n=0}^{\infty} \left| \left( p_{n-2} -2 p_{n-1} + p_n \right)\!\left( w^{(i)} \right) \right| \\
  & \!\!\! \times \, \sup \limits_{n \geq 0} \biggl\{ E \!\left[ \left| \int \!\hat{H}_n^1\!\left( z,N_t^{'(k)} \right) dQ_h(z) \right| \right] + E \!\left[ \left|\int\!\hat{H}_n^2\!\left( N_t^{'(k)},N_t^{(k)} \right) dQ_h(z) \right| \right] \biggr\}
\end{align*} 
and by using $q_n(w)=w p_n(w)$ and $\sum_{n=0}^{\infty} |(q_{n-2} -2 q_{n-1} + q_n)(w)| \leq 2$ again we obtain 
\begin{align*}
  E &\leq c \frac{x_i}{t^2} \frac{1}{w^{(i)}} \sup \limits_{n \geq 0} \biggl\{ E \!\left[ \left| \int \hat{H}_n^1\!\left( z,N_t^{'(k)} \right) dQ_h(z) \right| \right] \\
  & \qquad \qquad \qquad \quad + E \!\left[ \left|\int \hat{H}_n^2\!\left( N_t^{'(k)},N_t^{(k)} \right) dQ_h(z) \right| \right] \biggr\}.
\end{align*} 
Next observe that $\hat{H}_n^1 \!\left( z,N_t^{'(k)} \right)$ is zero for $k \in N_2$ and is a first order difference for $k \in N_C$ for which we obtain
\begin{align*}
  \left| \int \hat{H}_n^1\!\left( z,N_t^{'(k)} \right) dQ_h(z) \right| &\leq c \norm{f} t^{-1} (t+x_k)^{-1} \int z dQ_h(z) \\
  &\leq c \norm{f} h_k t^{-1/2} (t+x_k)^{-1/2}.
\end{align*}
The other term can be bounded as follows:
\begin{align*}
  \left| \hat{H}_n^2\!\left( N_t^{'(k)},N_t^{(k)} \right) \right| & \leq \sum_{N=0}^{\infty} \left| p_N\!\left( w^{'(k)} \right) - p_N\!\left( w^{(k)} \right) \right| \\
  & \qquad \times \Bigl| E \Bigl[ G_{t,x^{N_R}}^{k,i} \Bigl( x^{N_{C2}}, I_2^{(k)}(t) + R_N^{(k)}, X_0^{'(k)}(t) + S_N^{(k)}, \\
  & \qquad \qquad \qquad \qquad \qquad \quad I_2^{(i)}(t) + R_n^{(i)}, X_0^{'(i)}(t) + S_n^{(i)} \Bigr) \Bigr] \Bigr| \\
  & \leq \sum_{N=0}^{\infty} \left| p_N\!\left( w^{'(k)} \right) - p_N\!\left( w^{(k)} \right) \right| \norm{G},
\end{align*}
where $w^{(k)} = \frac{x_k}{2\gamma_k^0 t}$ and $w^{'(k)} = \frac{x_k}{2\gamma_k^0 t} + \frac{h_k}{\gamma_k^0 t}$. As done before in the proof of Proposition \ref{equ:proposition_22} we use
\[ \sum_{N=0}^{\infty} \left| \int_{w^{(k)}}^{w^{'(k)}} p_N'(u) du \right| \leq c t^{-1/2} h_k (t+x_k)^{-1/2} \]
to finally get with $\norm{G} \leq \norm{f}$
\[ \left| \int \hat{H}_n^2\!\left( N_t^{'(k)},N_t^{(k)} \right) dQ_h(z) \right| \leq c t^{-1/2} h_k (t+x_k)^{-1/2} \norm{f}. \]

Plugging our results into our estimate for $E$ we get
\[ E \leq c \frac{x_i}{t^2} \frac{t}{x_i} c \norm{f} h_k t^{-1/2} (t+x_k)^{-1/2} \leq c \norm{f} h_k t^{-3/2} (t+x_k)^{-1/2}, \]
which proves our assertion.
\end{proof}
%
% ------------------------------------------------------------

Finally we consider the increments in $x_k$, $k \in N_R$.
%
% ------------------------------------------------------------
%
\begin{lem} \label{equ:lemma_27}
If $f$ is a bounded Borel function on $\SSS_0$, then for all $x, h \in \SSS_0$, $i \in N_{C2}$ and $k \in N_R$
\[ \left| x_i \dfdx{P_tf}{i} (x+h_k e_k) - x_i \dfdx{P_tf}{i} (x) \right| \leq \frac{c \norm{f}}{t^{3/2}} |h_k| \min \limits_{l \in C_k} \!\left\{ (t+x_l)^{-1/2} \right\}. \]
\end{lem}
%
% ------------------------------------------------------------
%
\begin{proof}
Except for the necessary adaptations, already used in the proofs of the preceding assertions, the proof proceeds analogously to Lemma 27 in \cite{r6}. 
%
% additional part starts

Let
\[ \Delta_n^k G_{t,x^{N_R}} (X) \equiv \Delta_n G_{t,x^{N_R}+h_k e_k} (X) - \Delta_n G_{t,x^{N_R}} (X) \]
so that by (\ref{equ:ninetysix}),
\begin{equation} \label{equ:onehundredsixteen}
  x_i \dfdx{P_tf}{i} (x+h_k e_k) - x_i \dfdx{P_tf}{i} (x) = \sum \limits_{n=1}^4 x_i E^{N_{C2}} \left[ \Delta_n^k G_{t,x^{N_R}} (x^{N_{C2}}) \right]. 
\end{equation}

Considering the term $n=1$ in the above sum first, write
\begin{align*}
  & x_i E^{N_{C2}} \left[ \Delta_1^k G_{t,x^{N_R}} (x^{N_{C2}}) \right] \\
  & = x_i E^{N_{C2}} \left[ \int \int \1_{\{\nu_t=\nu_t'=0\}} \left( \right. \right. \\
  & \qquad G_{t,x^{N_R}+h_k e_k}^{+i,+i} \left( x^{N_{C2}}; \int_0^t \nu_s ds, 0, \int_0^t \nu_s' ds, 0 \right) \\
  & \qquad - G_{t,x^{N_R}+h_k e_k}^{+i,+i} \left( x^{N_{C2}}; 0, 0, \int_0^t \nu_s' ds, 0 \right) \\
  & \qquad - G_{t,x^{N_R}+h_k e_k}^{+i,+i} \left( x^{N_{C2}}; \int_0^t \nu_s ds, 0, 0, 0 \right) \\
  & \qquad + G_{t,x^{N_R}+h_k e_k}^{+i,+i} \left( x^{N_{C2}}; 0, 0, 0, 0 \right) \\
  & \qquad - G_{t,x^{N_R}}^{+i,+i} \left( x^{N_{C2}}; \int_0^t \nu_s ds, 0, \int_0^t \nu_s' ds, 0 \right) \\
  & \qquad + G_{t,x^{N_R}}^{+i,+i} \left( x^{N_{C2}}; 0, 0, \int_0^t \nu_s' ds, 0 \right) \\
  & \qquad + G_{t,x^{N_R}}^{+i,+i} \left( x^{N_{C2}}; \int_0^t \nu_s ds, 0, 0, 0 \right) \\
  & \qquad \left. \left. - G_{t,x^{N_R}}^{+i,+i} \left( x^{N_{C2}}; 0, 0, 0, 0 \right) \right) d\NN_0(\nu) d\NN_0(\nu') \right].
\end{align*}
The expression in square brackets on the right hand side is a third order difference (first order in $x_k$ and second order in $y_j, i \in C_j$ where $j \in N_R$) to which we may apply Lemma \ref{equ:lemma_13} (resp. an extension similar to Lemma \ref{equ:lemma_13} with $\Delta_1 \in \RR$ non-zero), the mean value theorem and the bound from Lemma \ref{equ:lemma_11}(b), and conclude
\begin{align}
  & \left| x_i E^{N_{C2}} \left[ \Delta_1^k G_{t,x^{N_R}} (x^{N_{C2}}) \right] \right| \label{equ:onehundredseventeen} \\
  & \leq c x_i \norm{f} E^{N_{C2}} \left[ \left( I_t^{(k)} \right)^{-1/2} \sum_{j_1: j_1 \in \bar{R}_i} \sum_{j_2: j_2 \in \bar{R}_i} \left( I_t^{(j_1)} \right)^{-1} \left( I_t^{(j_2)} \right)^{-1} \right] \nonumber \\
  & \qquad \times |h_k| \left( \int \int_0^t \nu_s ds d\NN_0(\nu) \right)^2 \nonumber \\
  & \leq c \norm{f} x_i t^{-5/2} \min \limits_{l \in C_k} \{ (t+x_l)^{-1/2} \} (t+x_i)^{-2} |h_k| t^2 \nonumber \\
  & \leq c \norm{f} |h_k| t^{-3/2} \min \limits_{l \in C_k} \{ (t+x_l)^{-1/2} \}. \nonumber
\end{align}
Considering the term $n=2$ in the above sum next, observe that
\begin{align*}
  & \left| x_i E^{N_{C2}} \left[ \Delta_2^k G_{t,x^{N_R}} (x^{N_{C2}}) \right] \right| \\
  & \leq \left| x_i E^{N_{C2}} \left[ \int \int \1_{\{\nu_t>0,\nu_t'=0\}} \left( \right. \right. \right. \\
  & \qquad G_{t,x^{N_R}+h_k e_k}^{+i,+i} \left( x^{N_{C2}}; \int_0^t \nu_s ds, \nu_t, \int_0^t \nu_s' ds, 0 \right) \\
  & \qquad - G_{t,x^{N_R}+h_k e_k}^{+i,+i} \left( x^{N_{C2}}; \int_0^t \nu_s ds, \nu_t, 0, 0 \right) \\
  & \qquad - G_{t,x^{N_R}}^{+i,+i} \left( x^{N_{C2}}; \int_0^t \nu_s ds, \nu_t, \int_0^t \nu_s' ds, 0 \right) \\
  & \qquad \left. \left. \left. + G_{t,x^{N_R}}^{+i,+i} \left( x^{N_{C2}}; \int_0^t \nu_s ds, \nu_t, 0, 0 \right) \right) d\NN_0(\nu) d\NN_0(\nu') \right] \right| \\
  & \quad + \left| x_i E^{N_{C2}} \left[ \int \int \1_{\{\nu_t>0,\nu_t'=0\}} \left( \right. \right. \right. \\
  & \qquad - G_{t,x^{N_R}+h_k e_k}^{+i,+i} \left( x^{N_{C2}}; 0, 0, \int_0^t \nu_s' ds, 0 \right) \\
  & \qquad + G_{t,x^{N_R}+h_k e_k}^{+i,+i} \left( x^{N_{C2}}; 0, 0, 0, 0 \right) \\
  & \qquad + G_{t,x^{N_R}}^{+i,+i} \left( x^{N_{C2}}; 0, 0, \int_0^t \nu_s' ds, 0 \right) \\
  & \qquad \left. \left. \left. - G_{t,x^{N_R}}^{+i,+i} \left( x^{N_{C2}}; 0, 0, 0, 0 \right) \right) d\NN_0(\nu) d\NN_0(\nu') \right] \right|.
\end{align*}
Both expressions on the right hand side are second order differences (first order in $x_k$ and first order in $y_j, i \in C_j$ where $j \in N_R$) to which we may apply Lemma \ref{equ:lemma_13}, the mean value theorem and the bound from Lemma \ref{equ:lemma_11}(b), and conclude
\begin{align}
  & \left| x_i E^{N_{C2}} \left[ \Delta_2^k G_{t,x^{N_R}} (x^{N_{C2}}) \right] \right| \label{equ:onehundrednineteen} \\
  & \leq c x_i \norm{f} E^{N_{C2}} \left[ \left( I_t^{(k)} \right)^{-1/2} \sum_{j_1: j_1 \in \bar{R}_i} \left( I_t^{(j_1)} \right)^{-1} \right] |h_k| \nonumber \\
  & \qquad \times \, \int \int_0^t \nu_s' ds d\NN_0(\nu') \NN_0(\nu_t>0) \nonumber \\
  & \leq c \norm{f} x_i t^{-3/2} \min \limits_{l \in C_k} \{ (t+x_l)^{-1/2} \} (t+x_i)^{-1} |h_k| t t^{-1} \nonumber \\
  & \leq c \norm{f} |h_k| t^{-3/2} \min \limits_{l \in C_k} \{ (t+x_l)^{-1/2} \}. \nonumber
\end{align}
By symmetry the same bound holds for the $n=3$ term.

To bound the $n=4$ term in (\ref{equ:onehundredsixteen}), we use the notation and setting introduced in Lemma \ref{equ:lemma_10} (with $\rho=1/2$). For $h_k$ fixed, let
\[ DG_{t,x^{N_R}}(y) = G_{t,x^{N_R}+h_k e_k}(y) - G_{t,x^{N_R}}(y) \]
and use the evident changes to former notations. The Mean Value Theorem and the bounds in Lemma \ref{equ:lemma_11}(a) imply
\begin{equation} \label{equ:onehundredtwenty}
  \left| DG_{t,x^{N_R}}(y) \right| \leq c \norm{f} \left( y_k \right)^{-1/2} |h_k|.
\end{equation}
We obtain
\begin{align*}
  & x_i E^{N_{C2}} \left[ \Delta_4^k G_{t,x^{N_R}} (x^{N_{C2}}) \right] \\
  & = x_i E^{N_{C2}} \left[ \int \int \1_{\{\nu_t>0,\nu_t'>0\}} \left( \right. \right. \\
  & \qquad DG_{t,x^{N_R}}^{+i,+i} \left( x^{N_{C2}}; \int_0^t \nu_s ds, \nu_t, \int_0^t \nu_s' ds, \nu_t' \right) \\
  & \qquad - DG_{t,x^{N_R}}^{+i,+i} \left( x^{N_{C2}}; 0, 0, \int_0^t \nu_s' ds, \nu_t' \right) \\
  & \qquad - DG_{t,x^{N_R}}^{+i,+i} \left( x^{N_{C2}}; \int_0^t \nu_s ds, \nu_t, 0, 0 \right) \\
  & \qquad \left. \left. + DG_{t,x^{N_R}}^{+i,+i} \left( x^{N_{C2}}; 0, 0, 0, 0 \right)  \right) d\NN_0(\nu) d\NN_0(\nu') \right].
\end{align*}
Take $w = \frac{x_i}{2\gamma_i^0 t}$ and recall that $q_n(w)=w p_n(w)$ with $\sum \limits_{n=0}^{\infty} |q_{n-2}(w) -2 q_{n-1}(w) + q_n(w)| \leq 2$, where $q_n=0$ if $n<0$. Then
\begin{align*}
  & x_i E^{N_{C2}} \left[ \Delta_4^k G_{t,x^{N_R}} (x^{N_{C2}}) \right] \\
  & \stackrel{(*)}{=} x_i \frac{c}{t^2} E \left[ DG_{t,x^{N_R}}^i \left( x^{N_{C2}}; I_2(t) + R_{N_t+2}(t), X_0'(t) + S_{N_t+2}(t) \right) \right. \\
  & \qquad \qquad \quad - 2 DG_{t,x^{N_R}}^i \left( x^{N_{C2}}; I_2(t) + R_{N_t+1}(t), X_0'(t) + S_{N_t+1}(t) \right) \\
  & \qquad \qquad \quad \left. + DG_{t,x^{N_R}}^i \left( x^{N_{C2}}; I_2(t) + R_{N_t}(t), X_0'(t) + S_{N_t}(t) \right) \right].
\end{align*}
Using (\ref{equ:onehundredtwenty}) and Lemma \ref{equ:lemma_7}(b) this can be bounded as follows
\begin{align}
  & \left| x_i E^{N_{C2}} \left[ \Delta_4^k G_{t,x^{N_R}} (x^{N_{C2}}) \right] \right| \label{equ:onehundredtwentyone} \\
  & = x_i \frac{c}{t^2} \frac{1}{w} \sum \limits_{n=0}^{\infty} \left| q_{n-2} (w) -2 q_{n-1} (w) + q_n (w) \right| \nonumber \\
  & \qquad \times \left| E \left[ DG_{t,x^{N_R}}^i \left( x^{N_{C2}}; I_2(t) + R_n(t), X_0'(t) + S_n(t) \right) \right] \right| \nonumber \\
  & \stackrel{k \in N_R}{\leq} x_i \frac{c}{t^2} \frac{t}{x_i} \norm{f} t^{-1/2} \min \limits_{l \in C_k} \{ (t+x_l)^{-1/2} \} |h_k| \nonumber \\
  & \leq c \norm{f} |h_k| t^{-3/2} \min \limits_{l \in C_k} \{ (t+x_l)^{-1/2} \}. \nonumber
\end{align}
Putting (\ref{equ:onehundredseventeen}), (\ref{equ:onehundrednineteen}) and (\ref{equ:onehundredtwentyone}) into (\ref{equ:onehundredsixteen}), we complete the proof of Lemma \ref{equ:lemma_27}.
%
% additional part ends
%
\end{proof}
% 
% ------------------------------------------------------------

{\it Continuation of the proof of Proposition \ref{equ:proposition_23}.} Use Lemmas \ref{equ:lemma_24}, \ref{equ:lemma_25} and \ref{equ:lemma_26} in (\ref{equ:ninetyseven}) together with the calculation following (\ref{equ:ninetyseven}) to obtain the bound for increments in $x_k, k \in N_{C2}$. Lemma \ref{equ:lemma_27} gives the corresponding bound for increments in $x_k, k \in N_R$ which completes the proof of (\ref{equ:ninetythree}). 
\end{proof} 
%
% ============================================================
%
\section{PROOF OF UNIQUENESS} \label{equ:section_3}
%
% ============================================================

As in Section 3, \cite{r6}, it is relatively straightforward to use the results from the previous sections on the semigroup $P_t$ to prove bounds on the resolvent $R_{\lambda}$ of $P_t$. 

 We shall then use these bounds to complete the proof of uniqueness of solutions to the martingale problem {\it MP}($\SA$,$\nu$) satisfying Hypothesis \ref{equ:hypothesis_1} and \ref{equ:hypothesis_2}, where $\nu$ is a probability on 
\[ \SSS = \left\{ x \in \RR_+^d: \prod_{j \in R} \left( \sum_{i \in C_j} x_i + x_j \right) > 0 \right\} \]
(recall (\ref{equ:state_space}) and Lemma \ref{equ:lemma_5}) and
\begin{equation} \label{equ:recall_def_SA}
  \SA f(x) = \sum_{j \in R} \gamma_j(x) \left( \sum_{i \in C_j} x_i \right) x_j f_{jj}(x) + \sum_{j \notin R} \gamma_j(x) x_j f_{jj}(x) + \sum_{j \in V} b_j(x) f_j(x). 
\end{equation}
The proof of uniqueness is identical to the one in \cite{r6} except for minor changes such as the replacement of $x_{c_j}$ by $\sum_{i \in C_j} x_i$ at the appropriate places. Note in particular the change in the definition of the state space $\SSS$.

In what follows we shall give a sketch of the proofs and indicate where statements have to be modified. For explicit calculations the reader is referred to \cite{r6}, Sections 3 and 4.
%
% ------------------------------------------------------------
%
\begin{nota} 
For $i \in N_{C2}$ let
\begin{equation} \label{equ:defi_sum_bar}
  \bar{y}_i = \left( \left\{ y_j \right\}_{j \in \bar{R}_i}, y_i \right), \bar{y}_i \bar{e}_i = \sum_{j \in \bar{R}_i} y_j e_j + y_i e_i \mbox{ and } \bar{\RR}_i = \RR^{|\bar{R}_i|} \times \RR_+, 
\end{equation}
where we understand this to be $\bar{y}_i = \left( y_i \right)$ in case $i \in N_2$, i.e. $\bar{R}_i = \emptyset$. For $f \in \SC_b^2(\SSS_0)$ let
\begin{equation} \label{equ:onehundredtwentyfive}
  \frac{\partial f}{\partial \bar{x}_i} = \left( \left\{ \dx{j}f \right\}_{j \in \bar{R}_i}, \dx{i}f \right), \left| \frac{\partial f}{\partial \bar{x}_i} \right| = \sum \limits_{j \in \bar{R}_i} \left| \dx{j}f \right| + \left| \dx{i}f\right| 
\end{equation}
and
\begin{equation} \label{equ:onehundredtwentysix}
  \bignorm{ \frac{\partial f}{\partial \bar{x}_i} } = \sup \!\left\{ \left| \frac{\partial f}{\partial \bar{x}_i} (x) \right| : x \in \SSS_0 \right\}, 
\end{equation}
where $\SSS_0 = \{ x \in \RR^d: x_i \geq 0 \mbox{ for all } i \in N_{C2} \}$ as defined in (\ref{equ:ten}). Also introduce
\[ \Delta_i f = \left( \left\{ x_i \ddx{j}f \right\}_{j \in \bar{R}_i}, x_i \ddx{i}f \right). \]
Define $|\Delta_i f|$ and $\norm{\Delta_i f}$ similarly to (\ref{equ:onehundredtwentyfive}) and (\ref{equ:onehundredtwentysix}). 
\end{nota}
%
% ------------------------------------------------------------
%
With the help of these notations $\SA^0$ (see (\ref{equ:seven})) can be rewritten to
\begin{align}
  \SA^0 f(x) &= \sum_{j \in V} b_j^0 f_j(x) + \sum_{j \in N_R} \gamma_j^0 \left( \sum_{i \in C_j} x_i \right) f_{jj}(x) + \sum_{i \in N_{C2}} \gamma_i^0 x_i f_{ii}(x) \label{equ:A0_new} \\
  &= \sum_{i \in N_{C2}} \Big\langle \underline{b^0}_{\; i},\frac{\partial f}{\partial \bar{x}_i} (x) \Big\rangle + \sum_{i \in N_{C2}} \Big\langle \underline{\gamma^0}_{\; i},\Delta_i f (x) \Big\rangle, \nonumber
\end{align}
where $\langle \cdot,\cdot \rangle$ denotes the standard scalar product in $\RR^k, k \in \NN$. To prevent overcounting in case $\bar{R}_{i_1} \cap \bar{R}_{i_2} \neq \emptyset$ for $i_1 \neq i_2$, $i_1, i_2 \in N_C$ (see also definition (\ref{equ:defi_sum_bar})) the vector $\overline{b^0}_i$ was replaced by $\underline{b^0}_{\; i}$ in the above formula, where $\underline{b^0}_{\; i}$ has certain coordinates set to zero so that the above equality holds. The same applies to the vector $\underline{\gamma^0}_{\; i}$. The details are left to the interested reader.
%
% ------------------------------------------------------------
%
\begin{thm} \label{equ:thm_34}
There is a constant $c$ such that for all $f \in \SC_w^{\alpha}(\SSS_0), \lambda \geq 1$ and $k, i \in N_{C2}$,
\begin{align*}
  (a) \quad & \bignorm{ \frac{\partial R_{\lambda} f}{\partial \bar{x}_k} } + \bignorm{ \Delta_k R_{\lambda} f } \leq c \lambda^{-\alpha/2} \wabsv{f}. \\
  (b) \quad & \left| \frac{\partial R_{\lambda} f}{\partial \bar{x}_k} \right|_{\SC_w^{\alpha}} + \left| \Delta_k R_{\lambda} f \right|_{\SC_w^{\alpha}} \leq c \wabsv{f}.
\end{align*}
\end{thm}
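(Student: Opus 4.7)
The plan is to write $R_\lambda f = \int_0^\infty e^{-\lambda t} P_t f \, dt$, differentiate under the integral sign, and insert the pointwise estimates from Sections \ref{equ:section_2_3} and \ref{equ:section_2_4} into the resulting time integrals. By Theorem \ref{equ:theorem_19} the hypothesis $f \in \SC_w^\alpha(\SSS_0)$ gives $f \in \SSS^\alpha$ with $|f|_\alpha \leq c \wabsv{f}$, so the estimates of Propositions \ref{equ:proposition_16}, \ref{equ:proposition_17}, \ref{equ:proposition_22}, \ref{equ:proposition_23} (which are stated for $f \in \SSS^\alpha$) all apply with $|f|_\alpha$ replaced by $c\wabsv{f}$.

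For part (a), fix $k \in N_{C2}$. Every component of $\partial R_\lambda f / \partial \bar{x}_k$ is either $\partial_j R_\lambda f$ with $j \in \bar{R}_k$ (so $k \in C_j$) or $\partial_k R_\lambda f$; Propositions \ref{equ:proposition_16} and \ref{equ:proposition_17} bound both by $c |f|_\alpha t^{\alpha/2-1}$ in $L^\infty$, using $x_k \leq \max_{l \in C_j} x_l$ for the $\partial_j$-case. The same two propositions give the identical bound for each component of $\Delta_k R_\lambda f$. Integrating against $e^{-\lambda t}$ and using $\int_0^\infty e^{-\lambda t} t^{\alpha/2-1}\,dt = \Gamma(\alpha/2)\lambda^{-\alpha/2}$ yields (a).

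For part (b), fix $i_0 \in N_{C2}$ and consider an admissible increment $h$ in the definition of $\ainorm{\,\cdot\,}{i_0}$, i.e.\ one supported on $\{i_0\}\cup \bar{R}_{i_0}$. I would apply the triangle inequality to decompose $h$ coordinate by coordinate, reducing the task to bounding, for each $l \in \{i_0\}\cup\bar{R}_{i_0}$, increments of the form $|\partial_\ast P_tf(x'+h_le_l) - \partial_\ast P_tf(x')|$ and the corresponding $x_\ast\partial_{\ast\ast}$-differences. Propositions \ref{equ:proposition_22} and \ref{equ:proposition_23} bound each such one-coordinate increment by $c|f|_\alpha t^{\alpha/2-3/2}|h_l|T_l^{-1/2}(t,x^{N_{C2}})$. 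The structural point is that $l \in \{i_0\}\cup\bar{R}_{i_0}$ forces $i_0 \in \{l\}\cup C_l$, hence $T_l^{-1/2}(t,x^{N_{C2}}) \leq (t+x_{i_0})^{-1/2}$; this encodes exactly the $x_{i_0}$ weight needed for $\ainorm{\,\cdot\,}{i_0}$. To produce the full bound $c\wabsv{f}\bigl(|h|^\alpha x_{i_0}^{-\alpha/2}\wedge |h|^{\alpha/2}\bigr)$, I combine the above H\"older estimate with the $L^\infty$ bound $c|f|_\alpha t^{\alpha/2-1}$ of Propositions \ref{equ:proposition_16}, \ref{equ:proposition_17} (applied to $\partial_\ast P_tf(x'+h_le_l)-\partial_\ast P_tf(x')$) by splitting the $t$-integral at a $|h|$-dependent threshold: splitting at $t=|h|$ and using $T_l^{-1/2}\leq t^{-1/2}$ on the upper piece yields the $|h|^{\alpha/2}$ bound, while splitting at $t=|h|^2/x_{i_0}$ and using $T_l^{-1/2}\leq x_{i_0}^{-1/2}$ on the upper piece yields the $|h|^\alpha x_{i_0}^{-\alpha/2}$ bound. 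In both cases the resulting time integrals are elementary and, for $\lambda\geq 1$, independent of $\lambda$.

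The main obstacle is the coordinate-wise bookkeeping in part (b): tracking which proposition governs each one-coordinate difference, verifying that the appropriate $T_l^{-1/2}$ weight is always controlled by $(t+x_{i_0})^{-1/2}$, and choosing the correct splitting threshold for each of the two target bounds. This is precisely the point where the earlier decision to define the seminorms $\ainorm{f}{i}$ using $\bar{R}_i$ rather than $R_i$ pays off, since it ensures that every coordinate $l$ in the support of an admissible $h$ has $x_{i_0}$ appearing either as $l=i_0$ or through $i_0\in C_l$. With that handle secured, the remaining time-integral estimates are routine and yield both (a) and (b) uniformly in $\lambda\geq 1$.
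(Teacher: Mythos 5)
Your proposal is correct and follows essentially the same route as the paper: use Theorem \ref{equ:theorem_19} to replace $|f|_\alpha$ by $c\wabsv{f}$ in Propositions \ref{equ:proposition_16}, \ref{equ:proposition_17}, \ref{equ:proposition_22}, \ref{equ:proposition_23}; integrate the $L^\infty$ bound $c\wabsv{f}t^{\alpha/2-1}$ against $e^{-\lambda t}$ for part (a); and for part (b) split the time integral at a threshold $\tilde t$, using the $L^\infty$ bound on $[0,\tilde t]$ and the one-coordinate H\"older increment bound (with $T_l^{-1/2}\le(t+x_{i_0})^{-1/2}$ for $l\in\{i_0\}\cup\bar R_{i_0}$) on $[\tilde t,\infty)$, choosing $\tilde t=|h|$ or $\tilde t=|h|^2/x_{i_0}$ for the two halves of the weighted H\"older seminorm. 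These are precisely the intermediate estimates the paper packages as its analogues of Propositions 30 and 32 of \cite{r6} before the $\tilde t$-splitting argument.
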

%
% ------------------------------------------------------------
%
\begin{note}
This result is slightly weaker than the corresponding Theorem 34 in \cite{r6} as $\ainorm{f}{k}$ is replaced by $\wabsv{f}$ in (a). 
\end{note}
%
% ------------------------------------------------------------
%
\begin{proof}
Firstly we obtain a result similar to Proposition 30 in \cite{r6}. This is an easy consequence of Proposition \ref{equ:proposition_16} and Proposition \ref{equ:proposition_17}, using the equivalence of norms shown in Theorem \ref{equ:theorem_19} and states that there is a constant $c$ such that 

(a) For all $f \in \SC_w^{\alpha}(\SSS_0), t>0, x \in \SSS_0$, and $i \in N_{C2}$,
\begin{equation} \label{equ:prop30_a1}
  \left| \frac{\partial P_tf}{\partial \bar{x}_i} (x) \right| \leq c \wabsv{f} t^{\alpha/2-1/2} (t+x_i)^{-1/2} \leq c \wabsv{f} t^{\alpha/2-1}, 
\end{equation}
and
\begin{equation} \label{equ:prop30_a2}
  \norm{ \Delta_i P_tf } \leq c \wabsv{f} t^{\alpha/2-1}. 
\end{equation}

(b) For all $f$ bounded and Borel on $\SSS_0$ and all $i \in N_{C2}$,
\[ \bignorm{ \frac{\partial P_tf}{\partial \bar{x}_i} } \leq c \norm{f} t^{-1}. \]

Note in particular that Theorem \ref{equ:theorem_19} gave $\SC_w^{\alpha} = \SSS^{\alpha}$ and that every function in $\SC_w^{\alpha}(\SSS_0)$ is by definition bounded. 

Secondly, an easy consequence of Propositions \ref{equ:proposition_22}, \ref{equ:proposition_23} and the triangle inequality, using the equivalence of norms shown in Theorem \ref{equ:theorem_19} and the equivalence of the maximum norm and Euclidean norm of finite dimensional vectors, is a result similar to Proposition 32, \cite{r6}: There is a constant $c$ such that for all $f \in \SC_w^{\alpha}(\SSS_0), i, k \in N_{C2}$ and $\bar{h}_i \in \bar{\RR}_i$, 

(a) 
\begin{equation} \label{equ:onehundredthirty}
  \left| \frac{\partial P_t f}{\partial \bar{x}_k} \!\left( x + \bar{h}_i \bar{e}_i \right) - \frac{\partial P_t f}{\partial \bar{x}_k} \!\left( x \right) \right| \leq c \wabsv{f} t^{-3/2+\alpha/2} (t+x_i)^{-1/2} \left| \bar{h}_i \right|, 
\end{equation}

(b) 
\begin{equation} \label{equ:onehundredthirtyone}
  \left| \Delta_k (P_t f) \!\left( x + \bar{h}_i \bar{e}_i \right) - \Delta_k (P_t f) \!\left( x \right) \right| \leq c \wabsv{f} t^{-3/2+\alpha/2} (t+x_i)^{-1/2} \left| \bar{h}_i \right|. 
\end{equation}

Finally recall that $R_{\lambda} f (x) = \int_0^{\infty} e^{-\lambda t} P_t f (x) dt$ is the resolvent associated with $P_t$. Now the remainder of the proof works as in the proof of Theorem 34 in \cite{r6}: Part (a) of Theorem \ref{equ:thm_34} is obtained by integrating (\ref{equ:prop30_a1}) resp. (\ref{equ:prop30_a2}) over time. 
%
% additional part starts
%
For part (b) let $\tilde{t}>0$ and $\bar{h}_i \in \bar{\RR}_i$, then
\begin{align}
  & \left| \Delta_k \left( R_{\lambda} f \right) \left( x+\bar{h}_i \bar{e}_i \right) - \Delta_k \left( R_{\lambda} f \right) \left( x \right) \right| \label{equ:onehundredforty} \\
  & \leq \int_0^{\tilde{t}} \left| \Delta_k \left( P_t f \right) \left( x+\bar{h}_i \bar{e}_i \right) \right| + \left| \Delta_k \left( P_t f \right) \left( x \right) \right| dt \nonumber \\
  & \qquad + \left| \int_{\tilde{t}}^{\infty} e^{-\lambda t} \left[ \Delta_k \left( P_t f \right) \left( x+\bar{h}_i \bar{e}_i \right) - \Delta_k \left( P_t f \right) \left( x \right) \right] dt \right| \nonumber \\
  & \leq \int_0^{\tilde{t}} c t^{\alpha/2-1} \wabsv{f} dt + \int_{\tilde{t}}^{\infty} c \wabsv{f} t^{\alpha/2-3/2} (t+x_i)^{-1/2} \left| \bar{h}_i \right| dt, \nonumber
\end{align}
where we used (\ref{equ:prop30_a2}) to bound the first term and (\ref{equ:onehundredthirtyone}) to bound the second. The above is at most
\[ c \wabsv{f} \tilde{t}^{\alpha/2} + c \wabsv{f} \tilde{t}^{\alpha/2-1/2} x_i^{-1/2} \left| \bar{h}_i \right|. \]
Set $\tilde{t} = x_i^{-1} \left| \bar{h}_i \right|^2$, to conclude that
\begin{equation} \label{equ:onehundredfortyone}
  \left| \Delta_k \left( R_{\lambda} f \right) \left( x+\bar{h}_i \bar{e}_i \right) - \Delta_k \left( R_{\lambda} f \right) \left( x \right) \right| \leq c \wabsv{f} x_i^{-\alpha/2} \left| \bar{h}_i \right|^{\alpha}.
\end{equation}
Use $(t+x_i)^{-1/2} \leq t^{-1/2}$ in (\ref{equ:onehundredforty}) to conclude that for any $\tilde{t}>0$,
\[ \left| \Delta_k \left( R_{\lambda} f \right) \left( x+\bar{h}_i \bar{e}_i \right) - \Delta_k \left( R_{\lambda} f \right) \left( x \right) \right| \leq c \wabsv{f} \tilde{t}^{\alpha/2} + c \wabsv{f} \tilde{t}^{\alpha/2-1/2} \tilde{t}^{-1/2} \left| \bar{h}_i \right|. \]
Now set $\tilde{t} = \left| \bar{h}_i \right|$ to conclude
\begin{equation} \label{equ:onehundredfortytwo}
  \left| \Delta_k \left( R_{\lambda} f \right) \left( x+\bar{h}_i \bar{e}_i \right) - \Delta_k \left( R_{\lambda} f \right) \left( x \right) \right| \leq c \wabsv{f} \left| \bar{h}_i \right|^{\alpha/2}.
\end{equation}
(\ref{equ:onehundredfortyone}) and (\ref{equ:onehundredfortytwo}) together imply the required bound on $\left| \Delta_k R_{\lambda} f \right|_{\SC_w^{\alpha}}$ in (b). 

The required bound on $\left| \frac{\partial R_{\lambda} f}{\partial \bar{x}_k} \right|_{\SC_w^{\alpha}}$ is proved in the same way.
%
% additional part ends
%
\end{proof}
%
% ------------------------------------------------------------
%
\begin{proof}[Proof of Theorem \ref{equ:theorem_4}]
The existence of a solution to the martingale problem for {\it MP}($\SA,\nu$) follows by standard methods (a result of Skorokhod yields existence of approximating solutions, then use a tightness-argument), e.g. see the proof of Theorem 1.1 in \cite{r1}. Note in particular that Lemma \ref{equ:lemma_5} ensures that solutions remain in $\SSS \subset \RR_+^d$. The uniform boundedness in $M$ of the term $E\!\left[ \sum_i \left| X_T^{M,i} \right| \right]$ that appears in the proof of Theorem 1.1 in \cite{r1} can easily be replaced by the uniform boundedness in $M$ of $E\!\left[ \sum_{i \in V} (X_T^{M,i})^2 \right]$ via a Gronwall-type argument.

At the end of this section we shall reduce the proof of uniqueness to the following theorem. The theorem investigates uniqueness of a perturbation of the operator $\SA^0$ as defined in (\ref{equ:seven}) (also refer to (\ref{equ:A0_new})) with coefficients satisfying (\ref{equ:eight}) and (\ref{equ:nine}). $\SA^0$ is the generator of a unique diffusion on $\SSS\!\left( x^0 \right)$ given by (\ref{equ:ten}) with semigroup $P_t$ and resolvent $R_{\lambda}$ given by (\ref{equ:eleven}). For the definition of $M^0$ refer to (\ref{equ:fifteen}). 

In what follows $x^0 \in \SSS$ will be arbitrarily fixed.
%
% ------------------------------------------------------------
%
\begin{thm} \label{equ:thm_36_37}
 Assume that
\begin{align} 
  \tilde{\SA} f(x) =& \; \sum_{j \in N_R} \tilde{\gamma}_j(x) \left( \sum_{i \in C_j} x_i \right) f_{jj}(x) \label{equ:onehundredfortythree} \\
  & \; + \, \sum_{j \in N_{C2}} \tilde{\gamma}_j(x) x_j f_{jj}(x) + \sum_{j \in V} \tilde{b}_j(x) f_j(x), \ x \in \SSS\!\left( x^0 \right), \nonumber
\end{align}
where $\tilde{b}_k: \SSS\!\left( x^0 \right) \rightarrow \RR$ and $\tilde{\gamma}_k: \SSS\!\left( x^0 \right) \rightarrow (0,\infty)$,
\[ \tilde{\Gamma} = \sum \limits_{k=1}^d \bignormw{ \tilde{\gamma}_k } + \bignormw{ \tilde{b}_k } < \infty. \]
Let
\[ \tilde{\epsilon}_0 = \sum \limits_{k=1}^d \bignorm{ \tilde{\gamma}_k - \gamma_k^0 } + \bignorm{ \tilde{b}_k - b_k^0 }, \]
where $b_k^0, \gamma_k^0, k \in V$ satisfy (\ref{equ:eight}). Let $\SB f = (\tilde{\SA} - \SA^0) f$. 

(a) There exists $\epsilon_1 = \epsilon_1(M^0) > 0$ and $\lambda_1 = \lambda_1(M^0,\tilde{\Gamma}) \geq 0$ such that if $\tilde{\epsilon}_0 \leq \epsilon_1$ and $\lambda \geq \lambda_1$ then $\SB R_{\lambda}: \SC_w^{\alpha} \rightarrow \SC_w^{\alpha}$ is a bounded operator with $\normx{\SB R_{\lambda}} \leq 1/2$. 

(b) If we assume additionally that $\tilde{\gamma}_k$ and $\tilde{b}_k$ are H\"older continuous of index $\alpha \in (0,1)$, constant outside a compact set and $\tilde{b}_k|_{\{x_k=0\}} \geq 0$ for all $k \in V \backslash N_R$, then the martingale problem {\it MP}($\tilde{\SA},\nu$) has a unique solution for each probability $\nu$ on $\SSS\!\left( x^0 \right)$.
\end{thm}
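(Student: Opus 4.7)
The plan is to prove (a) by directly combining Theorem~\ref{equ:thm_34}, Lemma~\ref{equ:lemma_18}, and the norm equivalence of Theorem~\ref{equ:theorem_19}, and to deduce (b) from (a) by the Stroock--Varadhan resolvent perturbation argument, as in Section~3 of \cite{r6}. For (a), I would first reorganize $\SB f$ to match the operators $\partial/\partial \bar{x}_k$ and $\Delta_k$ appearing in Theorem~\ref{equ:thm_34}. Since for $j \in N_R$ one has $C_j \subset N_C \subset N_{C2}$ and $j \in \bar{R}_i$ for every $i \in C_j$, the second-order part of $\SB$ can be rewritten as
\begin{equation*}
\sum_{i \in N_{C2}} \sum_{j \in \bar{R}_i} (\tilde{\gamma}_j - \gamma_j^0)\, x_i f_{jj} + \sum_{i \in N_{C2}} (\tilde{\gamma}_i - \gamma_i^0)\, x_i f_{ii},
\end{equation*}
so that each summand of $\SB R_{\lambda} f$ is the product of a coefficient difference $c - c^0$ (with $\wabsv{c-c^0} \leq \tilde{\Gamma}$, since constants contribute nothing to the seminorm, and $\bignorm{c-c^0} \leq \tilde{\epsilon}_0$) with a single component of either $\partial R_{\lambda} f / \partial \bar{x}_k$ or $\Delta_k R_{\lambda} f$ for some $k \in N_{C2}$. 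Applying Lemma~\ref{equ:lemma_18} to each such product, and converting between $|\cdot|_{\alpha}$ and $\wabsv{\cdot}$ via Theorem~\ref{equ:theorem_19} where needed, I obtain an estimate of the form
\begin{equation*}
\normw{(c - c^0) \cdot D R_{\lambda} f} \leq c_1 \bigl( \wabsv{c - c^0}\, \bignorm{D R_{\lambda} f} + \bignorm{c - c^0}\, \normw{D R_{\lambda} f} \bigr).
\end{equation*}
Bounding $\bignorm{D R_{\lambda} f} \leq c\, \lambda^{-\alpha/2}\, \wabsv{f}$ by Theorem~\ref{equ:thm_34}(a) and $\normw{D R_{\lambda} f} \leq c\, \normw{f}$ by Theorem~\ref{equ:thm_34}(b), then summing over the finitely many terms, yields $\normw{\SB R_{\lambda} f} \leq C_d (\tilde{\Gamma} \lambda^{-\alpha/2} + \tilde{\epsilon}_0)\, \normw{f}$. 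Choosing $\epsilon_1 = 1/(4 C_d)$ and then $\lambda_1$ large enough that $C_d\, \tilde{\Gamma}\, \lambda_1^{-\alpha/2} \leq 1/4$ gives $\normx{\SB R_{\lambda}} \leq 1/2$ as required.

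For (b), I would invoke the reductions recorded in Step~1 of Subsection~\ref{equ:subsection_1_3}: by the localization argument of \cite{r11} and Krylov's selection theorem, it suffices to establish uniqueness among strong Markov solutions of $MP(\tilde{\SA},\delta_{x^0})$ for a modified $\tilde{\SA}$ whose coefficients agree with those of $\SA$ on a ball around $x^0$ and are extended to be constant outside a large compact set. Shrinking that ball forces $\tilde{\epsilon}_0 \leq \epsilon_1$ by continuity of the original coefficients at $x^0$, while the constant extension preserves the hypothesis $\tilde{\Gamma} < \infty$, so part~(a) applies. Fix $\lambda \geq \lambda_1$ and let $\tilde{P}$ be a strong Markov solution with resolvent $\tilde{R}_{\lambda}$. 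For $g \in \SC_w^{\alpha}$, Proposition~\ref{equ:proposition_14} gives $R_{\lambda} g \in \SC_b^2(\SSS_0)$ with $(\lambda - \SA^0) R_{\lambda} g = g$, so the martingale identity for $\tilde{P}$ applied to $R_{\lambda} g$, together with $\SB = \tilde{\SA} - \SA^0$, yields the resolvent identity
\begin{equation*}
\tilde{R}_{\lambda} g = R_{\lambda} g + \tilde{R}_{\lambda}(\SB R_{\lambda} g).
\end{equation*}
Iterating $n$ times and using part~(a) to get $\normw{(\SB R_{\lambda})^n g} \leq 2^{-n} \normw{g}$, together with the trivial bound $\bignorm{\tilde{R}_{\lambda} h} \leq \bignorm{h}/\lambda$, the remainder $\tilde{R}_{\lambda}(\SB R_{\lambda})^n g(x^0)$ tends to zero, so $\tilde{R}_{\lambda} g(x^0) = \sum_{k \geq 0} R_{\lambda}(\SB R_{\lambda})^k g(x^0)$ is uniquely determined by $\SA^0$ and $\SB$. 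Since $\SC_w^{\alpha}$ contains a measure-determining class by Theorem~\ref{equ:theorem_19}, this forces uniqueness of the one-dimensional distributions of $\tilde{P}$ at $x^0$, and the strong Markov property upgrades this to uniqueness in law as in Theorem~6.3.2 of \cite{r11}.

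The main obstacle is the bookkeeping in (a): the product estimate of Lemma~\ref{equ:lemma_18} mixes $\wabsv{\cdot}$ on one factor with $|\cdot|_{\alpha}$ on the other, while the derivative bounds of Theorem~\ref{equ:thm_34} are stated entirely in terms of $\wabsv{\cdot}$. Inserting Theorem~\ref{equ:theorem_19} at exactly the right step is what allows the large factor $\tilde{\Gamma}$ to be multiplied by the small $\lambda^{-\alpha/2}$ while the non-decaying derivative norm $\normw{DR_{\lambda} f}$ is multiplied only by the small $\tilde{\epsilon}_0$; only then do both contributions vanish under separate tuning of $\tilde{\epsilon}_0$ and $\lambda$. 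In (b), the delicate point is that the localization reduction must simultaneously produce $\tilde{\epsilon}_0 \leq \epsilon_1$ (by shrinking the ball of agreement with $\SA$) and preserve $\tilde{\Gamma} < \infty$ (by extending coefficients to be Hölder continuous and constant outside a compact set), which is exactly why the hypothesis of (b) is phrased as it is.
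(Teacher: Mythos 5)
Your proposal takes essentially the same route as the paper: for (a), decompose $\SB$ into the $\partial/\partial \bar{x}_i$ and $\Delta_i$ operators, apply Lemma~\ref{equ:lemma_18} to each scalar product, invoke Theorem~\ref{equ:thm_34}(a) and (b) to get the $\lambda^{-\alpha/2}$ and $\tilde{\epsilon}_0$ contributions, and use the norm equivalence of Theorem~\ref{equ:theorem_19}; for (b), derive the resolvent identity $\tilde{R}_{\lambda} f = R_{\lambda} f + \tilde{R}_{\lambda}\SB R_{\lambda} f$, iterate it as a Neumann series, and use that $\SC_w^{\alpha}$ is measure determining. This matches the paper's argument.

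One point you gloss over, and which the paper spends effort on (Lemma~\ref{equ:lem_38}), is the derivation of the resolvent identity in (b). You assert that ``Proposition~\ref{equ:proposition_14} gives $R_{\lambda} g \in \SC_b^2(\SSS_0)$,'' but Proposition~\ref{equ:proposition_14} only shows $P_t g \in \SC_b^2$ for each fixed $t>0$, with the bound $\norm{(P_t g)_{kl}} \le c\norm{g}/t^2$; this blows up too fast at $t=0$ to conclude that $R_{\lambda} g = \int_0^\infty e^{-\lambda t} P_t g\,dt$ has bounded second partials, so you cannot simply plug $R_{\lambda} g$ into the martingale identity. The paper instead applies the identity to $g_\delta = \int_\delta^\infty e^{-\lambda t} P_t g\,dt \in \SC_b^2$ and lets $\delta \downarrow 0$, using the \emph{weighted} bounds (\ref{equ:prop30_a1})--(\ref{equ:prop30_a2}) to show $\SB g_\delta \to \SB R_{\lambda} g$ and $\SA^0 g_\delta \to \lambda R_{\lambda} g - g$ uniformly (the point being that $\SB$ and $\SA^0$ involve only the weighted second derivatives $x_i\,\partial^2/\partial x_j^2$, which are integrable in $t$ near $0$ even though the unweighted ones are not). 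This approximation step is needed, not merely cosmetic. A second, smaller remark: the localization step you fold into (b) (shrinking a ball around $x^0$ to force $\tilde{\epsilon}_0 \le \epsilon_1$) properly belongs to the reduction of Theorem~\ref{equ:theorem_4} to Theorem~\ref{equ:thm_36_37}, not to the proof of Theorem~\ref{equ:thm_36_37}(b) itself, which simply assumes the smallness of $\tilde{\epsilon}_0$ as part of its standing hypotheses.
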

%
% ------------------------------------------------------------
%
\begin{proof}
Let $\tilde{R}_\lambda$ be the associated resolvent operator of the perturbation operator $\tilde{\SA}$. Using the definition $\SB = \tilde{\SA} - \SA^0$ and recalling (\ref{equ:A0_new}) we get for $f \in \SC_w^{\alpha}$ that
\begin{align*}
  \normw{ \SB R_{\lambda} f } \leq& \; \sum_{i \in N_{C2}} \bignormw{ \bigg\langle \underline{\left( \tilde{b}(x) - b^0 \right)}_{\; i},\frac{\partial R_\lambda f}{\partial \bar{x}_i} (x) \bigg\rangle } \\
  & \; + \, \sum_{i \in N_{C2}} \bignormw{ \bigg\langle \underline{\left( \tilde{\gamma}(x) - \gamma^0 \right)}_{\; i},\Delta_i R_\lambda f (x) \bigg\rangle }. 
\end{align*}
Using (\ref{equ:seventysix}) (recall in particular the discussion on the reasons for using two different norms from Remark \ref{equ:rmk_norms}) we obtain for instance for arbitrary $i \in N_C$ and $j \in \bar{R}_i$ 
\begin{align*}
  & \left| \left( \tilde{b}_j(x) - b_j^0 \right) \frac{ \partial R_{\lambda} f }{\partial x_j } (x) \right|_{\SC_w^{\alpha}} \\
  & \leq c \left[ \bignormw{ \tilde{b}_j(x) - b_j^0 } \bignorm{ \frac{ \partial R_{\lambda} f }{\partial x_j } (x) } + \bignorm{ \tilde{b}_j(x) - b_j^0 } \left| \frac{ \partial R_{\lambda} f }{\partial x_j } (x) \right|_{\alpha} \right] \\
  & \leq c \left[ \left( \tilde{\Gamma} + M^0 \right) \lambda^{-\alpha/2} \wabsv{f} + \tilde{\epsilon}_0 \wabsv{f} \right]
\end{align*} 
by Theorem \ref{equ:thm_34}, (\ref{equ:eighty}) and the assumptions of this theorem. By arguing similarly for the other terms we get indeed $\normw{ \SB R_{\lambda} f } \leq \frac{1}{2} \normw{f}$ for $\lambda$ big enough thus finishing the proof of part (a).

For part (b) we proceed as in the proof of \cite{r6}, Theorem 37. 
%
% additional part starts
%
% ............................................................
%
Existence of solutions to {\it MP}($\tilde{\SA},\nu$) is standard and the assumptions on the coefficients $\{ \tilde{b}_k \}$ ensure solutions remain in $\SSS(x^0)$. Hence, we only need consider uniqueness. By conditioning we may assume $\nu = \delta_x, x \in \SSS(x^0)$ (see p. 136 of [B]). By Krylov's Markov selection theorem it suffices to show uniqueness of a strong Markov family $\{ P^{x'}, x' \in \SSS(x^0) \}$ of solutions to {\it MP}($\tilde{\SA},\delta_x$) (see the proof of Proposition 2.1 in [ABBP]). Let $(\tilde{R}_{\lambda}, \lambda > 0)$ be the associated resolvent operators. 
%
% ............................................................
%
\begin{lem} \label{equ:lem_38}
For $f \in \SC_w^{\alpha}$, $\tilde{R}_{\lambda} f = R_{\lambda} f + \tilde{R}_{\lambda} \SB R_{\lambda} f$. 
\end{lem}
%
% ............................................................
%
\begin{proof}
An easy application of Fatou's Lemma shows that \\ $\tilde{E}_x(x_t^{(i)}) \leq x_i + \norm{ \tilde{b}_i } t$ for all $i \in N_{C2}$ (recall these coordinates are non-negative). This implies the square functions of the martingale part of each coordinate are integrable. It follows that for $g \in \SC_b^2(\SSS(x^0))$, 
\[ M_g(t) \equiv g(x_t) - g(x_0) - \int_0^t \tilde{\SA} g(x_s) ds \]
is a martingale and so
\[ \tilde{E}_x(g(x_t)) = g(x) + \int_0^t \tilde{E}_x \left( \tilde{\SA} g (x_s) \right) ds. \]
Multiply by $\lambda e^{- \lambda t}$ and integrate over $t \geq 0$ to see that for $g \in \SC_b^2(\SSS(x^0))$,
\begin{equation} \label{equ:onehundredfortysix}
  \lambda \tilde{R}_{\lambda} g = g + \tilde{R}_{\lambda} (\tilde{\SA} g) = g + \tilde{R}_{\lambda} ( \SB g ) + \tilde{R}_{\lambda} ( \SA^0 g ).
\end{equation}
Let $f \in \SC_w^{\alpha}$ and for $\delta>0$, set $g_{\delta}(x) \equiv \int_{\delta}^{\infty} e^{- \lambda t} P_t f (x) dt$. Proposition \ref{equ:proposition_14} implies that $g_{\delta} \in \SC_b^2(\SSS(x^0))$. Moreover using the bounds (\ref{equ:prop30_a1}) and (\ref{equ:prop30_a2}) it is easy to verify that for $k \in V$,
\begin{equation} \label{equ:onehundredfortyseven}
  (g_{\delta})_k(x) \rightarrow (R_{\lambda} f)_k(x) \mbox{ as } \delta \downarrow 0 \mbox{ uniformly in } x \in \SSS(x^0),
\end{equation}
for $j \in N_R, i \in C_j$
\begin{equation} \label{equ:onehundredfortyeight}
  x_i (g_{\delta})_{jj} (x) \rightarrow x_i (R_{\lambda} f)_{jj} (x) \mbox{ as } \delta \downarrow 0 \mbox{ uniformly in } x \in \SSS(x^0), 
\end{equation}
and for $j \in N_{C2}$
\begin{equation} \label{equ:onehundredfortynine}
  x_j (g_{\delta})_{jj} (x) \rightarrow x_j (R_{\lambda} f)_{jj} (x) \mbox{ as } \delta \downarrow 0 \mbox{ uniformly in } x \in \SSS(x^0). 
\end{equation}
Since $\{ \tilde{b}_k \}, \{ \tilde{\gamma}_k \}$ are bounded, (\ref{equ:onehundredfortyseven}), (\ref{equ:onehundredfortyeight}), (\ref{equ:onehundredfortynine}) imply that
\begin{equation} \label{equ:onehundredfifty}
  \SB g_{\delta} \rightarrow \SB R_{\lambda} f \mbox{ as } \delta \downarrow 0 \mbox{ uniformly on } \SSS(x^0). 
\end{equation}

An easy calculation using $\dot{P}_t g_{\delta} = P_t \SA^0 g_{\delta} \rightarrow \SA^0 g_{\delta}$ as $t \downarrow 0$ shows that
\begin{equation} \label{equ:onehundredfiftyone}
  \SA^0 g_{\delta} = \lambda g_{\delta} - e^{- \lambda \delta} P_{\delta} f \rightarrow \lambda R_{\lambda} f - f \mbox{ uniformly on } \SSS(x^0) \mbox{ as } \delta \downarrow 0.
\end{equation}

Now set $g = g_{\delta}$ in (\ref{equ:onehundredfortysix}) and use (\ref{equ:onehundredfifty}), (\ref{equ:onehundredfiftyone}) and the obvious uniform convergence of $g_{\delta}$ to $R_{\lambda} f$ to see that
\[ \lambda \tilde{R}_{\lambda} (R_{\lambda} f) = R_{\lambda} f + \tilde{R}_{\lambda} ( \SB R_{\lambda} f ) + \tilde{R}_{\lambda} ( \lambda R_{\lambda} f - f ). \]
Rearranging, we get the required result.
\end{proof}
%
% ............................................................
%
Continuing with the proof of Theorem \ref{equ:thm_36_37}(b), note that the H\"older continuity of $\tilde{\gamma}_k$ and $\tilde{b}_k$ and the fact that they are constant outside a compact set imply $\tilde{\Gamma} < \infty$. Therefore we may choose $\lambda_1$ as in part (a) of Theorem \ref{equ:thm_36_37} so that for $\lambda \geq \lambda_1$, $\SB R_{\lambda}: \SC_w^{\alpha} \rightarrow \SC_w^{\alpha}$ with norm at most $1/2$. If $f \in \SC_w^{\alpha}$, we may iterate Lemma \ref{equ:lem_38} to see that
\[ \tilde{R}_{\lambda} f (x) = \sum \limits_{n=0}^{\infty} R_{\lambda} (( \SB R_{\lambda} )^n f )(x), \]
where the series converges uniformly and the error term approaches zero by the bound
\[ \norm{ (\SB R_{\lambda})^n f } \leq \normw{ (\SB R_{\lambda})^n f } \leq 2^{-n} \normw{f}. \]
This shows that for all $f \in \SC_w^{\alpha}, \tilde{R}_{\lambda} f (x)$ is unique for $\lambda \geq \lambda_1$ and hence so is $\tilde{P}_t f (x) = \tilde{E}_x (f (x_t) )$ for all $t \geq 0$. As $\SC_w^{\alpha}$ is measure determining, uniqueness of $\tilde{P}_t(x,dy)$ and hence $\tilde{P}^x$ follows. 
%
% ............................................................
%
% additional part ends
%
\end{proof}
%
% ------------------------------------------------------------
%

{\it Continuation of the proof of Theorem \ref{equ:theorem_4}.} Recall ``Step 1: Reduction of the problem'', in Subsection \ref{equ:subsection_1_3}. The remainder of the proof of uniqueness of $MP(\SA,\delta_{x^0})$ works analogously to \cite{r6} (compare the proof of Theorem 4 on pp. 380-382 in \cite{r6}) except for minor changes, making again use of Lemma \ref{equ:lemma_5}. The main step consists in using a localization argument of \cite{r11} (see e.g. the argument in the proof of Theorem 1.2 of \cite{r4}), which basically states that it is enough if for each $x^0 \in \SSS$ the martingale problem $MP(\tilde{\SA},\delta_{x^0})$ has a unique solution, where $b_i = \tilde{b}_i$ and $\gamma_i = \tilde{\gamma}_i$ agree on some neighborhood of $x^0$. By comparing the definition of $\SA$ (see (\ref{equ:recall_def_SA})) and $\tilde{\SA}$ (see (\ref{equ:onehundredfortythree})) one chooses
\begin{align*}
  & \tilde{b}_k(x) = b_k(x) \mbox{  for all } k \in V, \\
  & \tilde{\gamma}_j(x) = x_j \gamma_j(x) \mbox{ for } j \in N_R, \\
  & \tilde{\gamma}_j(x) = \left( \sum \limits_{i \in C_j} x_i \right) \gamma_j(x) \mbox{ for } j \in R \backslash N_R \\
  & \tilde{\gamma}_j(x) = \gamma_j(x) \mbox{ for } j \notin R.
\end{align*}
By setting
\[ b_k^0 \equiv \tilde{b}_k(x^0) \mbox{  and  } \gamma_k^0 \equiv \tilde{\gamma}_k(x^0) \]
and choosing $\tilde{b}_k$ and $\tilde{\gamma}_k$ in appropriate ways, the assumptions of Theorem \ref{equ:thm_36_37}(a), (b) will be satisfied in case $b_k^0 \geq 0$ for all $k \in N_2$ (and hence by Hypothesis \ref{equ:hypothesis_2} for all $k \in N_{C2}$). In particular the boundedness and continuity of the coefficients of $\tilde{\SA}$ will allow us to choose $\tilde{\epsilon}_0$ arbitrarily small. In case there exists $k \in N_2$ such that $b_k^0 < 0$ a Girsanov argument as in the proof of Theorem 1.2 of \cite{r4} allows the reduction of the latter case to the former case amd thus finishes the proof.
%

% additional part starts
%
% ............................................................
%
Indeed, we can adapt the proof of Theorem 1.2 in \cite{r4} to our setup. We have
\[ \tilde{\SA} = \sum_{j \in V} \tilde{b}_j(x) \dx{j} + \sum_{j \in N_R} \tilde{\gamma}_j(x) \left( \sum_{i \in C_j} x_i \right) \ddx{j} + \sum_{i \in N_{C2}} \tilde{\gamma}_i(x) x_i \ddx{i} \]
with $\tilde{\gamma}_i(x^0) > 0$ and investigate the case $\tilde{b}_j(x^0) = b_j(x^0) < 0$ for some $j \in N_2$. Note that it is essential for the proof that $j \in N_2$.

By relabeling the axes if necessary, let us choose $0 \leq m \leq |N_2|$ such that $\{ 1,\ldots,m \} \subset N_2$ and assume $x_i^0=0$ for $i \leq m$. By Hypothesis \ref{equ:hypothesis_2},
\[ \tilde{b}_i(x^0) = b_i(x^0) \geq 0 \mbox{ for all } i \leq m. \]
If $m=|N_2|$, we are in {\it Case 1} of the proof of Theorem 4 in \cite{r6}, i.e. where $b_k^0 \geq 0$ for all $k \in N_2$ ; so we assume $m<|N_2|$.

Set $a_{ii}(x) \equiv \tilde{\gamma}_i(x) x_i \geq 0, i \leq m$ and let $r>0$ such that 
\[ a_{ii}(x) \equiv \tilde{\gamma}_i(x) x_i \geq \epsilon > 0, \ m < i \leq |N_2| \]
in $B(x^0,2r) \cap \RR_+^d$. Define $\mu_i(x) \equiv 0$ for $i \leq m$ and
\[ \mu_i(x) = \rho_r(|x-x^0|) a_{ii}(x)^{-1} \tilde{b}_i(x), \ m < i \leq |N_2|, \ x \in \RR_+^d, \]
where $\rho_r:[0,\infty) \rightarrow [0,1]$ is the function depicted in Figure \ref{equ:graph_rho}.
\picturefig{0.5}{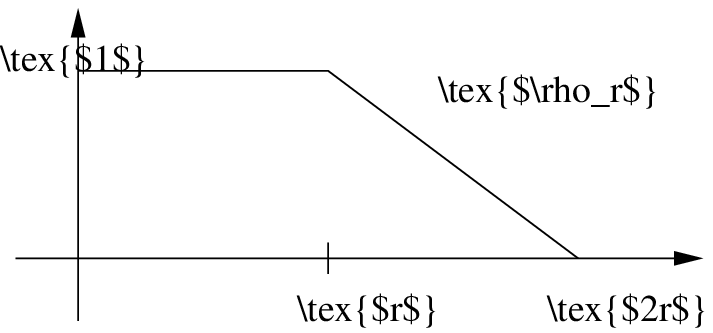}{Definition of $\rho_r$.}{graph_rho}

Then $\mu$ is bounded and continuous on the positive orthant. If
\[ \bar{b}_i(x) \equiv \left( \1 - \1(m<i\leq|N_2|) \right) \tilde{b}_i(x), \]
then $\bar{b}_i(x^0) \geq 0$ for all $i \in N_2$ and so $\bar{b}$ satisfies {\it Case 1} of the proof of Theorem 4 in \cite{r6} and thus a unique solution exists for $MP(\bar{\SL},\delta_z)$.

 Now set
\[ \hat{b}_i(x) \equiv \bar{b}_i(x) + a_{ii}(x) \mu_i(x) \]
and note that
\[ \mbox{ if } x_i^0=0, \mbox{ then } \hat{b}_i(x^0) = \bar{b}_i(x^0) \geq 0. \]
The existence of a solution to $MP(\hat{\SL},\delta_z)$ is again standard. Girsanov's theorem (see V.27 in [RW]) and the fact that there is a unique solution to $MP(\bar{\SL},\delta_z)$ shows that the solution to $MP(\hat{\SL},\delta_z)$ is unique. Here note that although $a_{ii}(x)$ is unbounded, one can still apply Girsanov's theorem to show that the law of $P(X(\cdot \wedge T_R) \in \cdot)$ is unique where $T_R$ is the exit time from $[0,R]^d$ and this gives the result. Note also that Girsanov's theorem applies without change in our $\RR_+^d$-valued setting.

Finally, if $x \in B(x^0,r) \cap \RR_+^d$, then
\[ \hat{b}_i(x) = \tilde{b}_i(x). \]
This completes the proof of this part. 
%
% ............................................................
%
% additional part ends
%
\end{proof}
%
%
% ============================================================
% ============================================================
%
\section*{Acknowledgement}
I would like to thank my Ph.D. supervisor Ed Perkins for suggesting this problem to me and for providing helpful comments and explanations. Further thanks go to a referee for a careful reading of the paper and a number of helpful suggestions.
%
% ============================================================
% ============================================================

% ============================================================

\end{document}